\pgfplotsset{width=7cm,compat=1.8}
\newtheorem{thm}{Theorem}[section]
\newtheorem{prop}[thm]{Proposition}
\newtheorem{lem}[thm]{Lemma}
\newtheorem{cor}[thm]{Corollary}
\theoremstyle{definition}
\newtheorem{dfn}[thm]{Definition}
\theoremstyle{remark}
\newtheorem{oss}[thm]{Remark}
\theoremstyle{definition}
\newtheorem{ip}[thm]{Hypothesis}
\newcommand{\iprod}[2]{\langle#1,#2\rangle}
\newcommand{\norm}[1]{\left\lVert#1\right\rVert}
\newcommand{\abs}[1]{\left\lvert#1\right\rvert}
\newcommand{\Nat}{\mathbb{N}}
\newcommand{\R}{\mathbb{R}}
\newcommand{\al}{\alpha}
\newcommand{\be}{\beta}
\newcommand{\g}{\gamma}
\newcommand{\ep}{\varepsilon}
\newcommand{\om}{\omega}
\newcommand{\si}{\sigma}
\newcommand{\ph}{\varphi}
\newcommand{\la}{\lambda}
\newcommand{\de}{\partial}
\newcommand{\sm}{\smallsetminus}
\newcommand{\op}{\mathcal{L}}
\newcommand{\Tr}[1]{{\rm Tr}\left(#1\right)}
\newcommand{\at}[1]{\biggl\rvert_{#1}}
\newcommand{\Ng}{\mathcal{N}}
\newcommand{\ac}{\mathrm{H}}
\newcommand{\uts}{U(t,s)}
\newcommand{\pst}{P_{s,t}}
\newcommand{\Qm}[1]{Q\left(#1\right)^{-\frac{1}{2}}}
\newcommand{\Qp}[1]{Q\left(#1\right)^{\frac{1}{2}}}
\newcommand{\ix}[2]{\langle#1,#2\rangle_X}
\newcommand{\os}{\mathcal{O}}
\newcommand{\oa}{\mathcal{A}}
\newcommand{\id}{{\rm Id}}
\newcommand{\scal}[2]{{\left\langle #1,#2\right\rangle}}
\newcommand{\Id}{{\operatorname{Id}}}
\newcommand\rwhat[1]{%
\savestack{\tmpbox}{\stretchto{%
  \scaleto{%
    \scalerel*[\widthof{\ensuremath{#1}}]{\kern-.6pt\bigwedge\kern-.6pt}%
    {\rule[-\textheight/2]{1ex}{\textheight}}
  }{\textheight}%
}{0.5ex}}%
\stackon[1pt]{#1}{\tmpbox}%
}
\title{Log-Sobolev inequalities and hypercontractivity for Ornstein-Uhlenbeck evolution operators in infinite dimensions}
\author{Davide A. Bignamini and Paolo De Fazio}
\begin{document}

\maketitle

\footnote{\textbf{Keywords}:Contractivity estimates, logarithmic Sobolev inequalities, non autonomous stochastic partial differential equations, markov transition evolution operators}
\footnote{\textbf{subjclass (2020)}: 28C20, 46G12, 60H15}

\begin{abstract}
In an infinite dimensional separable Hilbert space $X$, we study the realizations of Ornstein-Uhlenbeck evolution operators $\pst$ in the spaces $L^p(X,\g_t)$, $\{\g_t\}_{t\in\R}$ being a suitable evolution system of measures for $\pst$ in $\R$. We prove hyperconctractivity results, relying on suitable Log-Sobolev estimates. Among the examples we consider the transition evolution operator associated to a non autonomous stochastic parabolic PDE.

\end{abstract}

\section{Introduction}

Let $(X, \ix{\cdot}{\cdot},\norm{\,\cdot\,}_X)$ be a separable Hilbert space and set $\Delta=\{(s,t)\in\R^2\ |\ s<t\}$. Let $\{U(t,s)\}_{(t,s)\in\overline{\Delta}}$ be an evolution operator in $X$ and let $\{B(r)\}_{r\in\R}$ be a strongly continuous family of linear bounded operators on $X$. In this paper we consider a class of evolution operators $\{P_{s,t}\}_{(s,t)\in\overline{\Delta}}$ defined on the space of bounded and Borel measurable functions $\ph$ by
\begin{align*}
 P_{r,r}&=I,\qquad \forall\; r\in\R, \\
P_{s,t}\ph(x)&=\int_X\ph(y)\,\Ng_{U(t,s)x ,Q(t,s)}(dy),\quad (s,t)\in\Delta, x\in X
\end{align*}
where $\Ng_{U(t,s)x ,Q(t,s)}$ is the Gaussian measure in $X$ with mean $\uts x$
and covariance operator
\begin{align} 
Q(t,s)=\int_s^tU(t,r) Q(r) U(t,r)^\star\,dr,\qquad Q(r):=B(r)B(r)^\star.
\end{align}
Of course we assume that $Q(t,s)$ has finite trace for every $(s,t)\in\Delta$.

The main achievement of this paper is the proof of the hypercontrivity of $\pst$ in Lebesgue spaces with respect to suitable measures. It relies on a family of logaritmic Sobolev inequalities that is the second main result of this paper.

We recall that in the autonomous case, $\pst=P_{0,t-s}$ is a semigroup and it is settled in Lebesgue spaces with respect to its invariant measure, that exists and is unique under suitable assumptions. 

In the non autonomous case a single invariant measure does not exist in general, being replaced by evolution systems of measures, namely families of Borel probability measures $\{\g_r\}_{r\in\R}$ in $X$ such that 
\begin{equation*}
\int_XP_{s,t}\ph(x)\gamma_s(dx)=\int_X\ph(x)\gamma_t(dx),
\end{equation*}
for every $s<t$ and for every bounded and continuous $\ph:X\rightarrow\R$. So, an obviuous difficulty arises, namely the spaces $L^p(X,\g_r)$ depend explicitly on $r$ and we cannot set our problem in a fixed $L^p$ space.

The starting point of our analysis are basic (but not trivial in this setting) results on the relation between $\{P_{s,t}\}_{(s,t)\in\overline{\Delta}}$ and a family of non autonomous Ornstein-Uhlenbeck type operators $\{L(r)\}_{r\in\R}$ given by 
\begin{equation*}
L(t)\ph(x)=\frac{1}{2}\mbox{\rm Tr}\Bigl(Q(t)D^2\ph(x)\Bigr)+\ix{A(t)x}{\nabla\ph(x)}.
\end{equation*} 
Here $\{A(t)\}_{t\in\R}$ is a family of linear and not necessarily bounded operators associated to $\{U(t,s)\}_{(s,t)\in\overline{\Delta}}$.  In the more significant example, $\{A(t)\}_{t\in\R}$ is a family of realizations of elliptic operators in a $L^p$ space, see Section \ref{Examples}.

In Section \ref{connessione} we prove that for every $(s,t)\in\Delta$ and $x\in X$ we have
\begin{align}
&\frac{\de}{\de s} \pst \ph (x)=-L(s)\pst \ph (x)\label{1int},\\
&\frac{\de}{\de t} \pst \ph (x)=\pst L(t) \ph (x)\label{2int},
\end{align}
where $\ph$ belongs to the space of smooth cylindrical functions $\mathcal{E}_t(X)$ defined in \eqref{trigonometriciR}.

In Section \ref{misureinvarianti} we provide conditions that guarantee existence of an evolution system of measures $\{\gamma_r\}_{r\in\R}$ where $\g_r$ is a Gaussian measure with mean zero and it satisfies
\begin{equation*}
\lim_{s\rightarrow-\infty}\pst \ph(x)=\int_X\ph(y)\, \, \g_t(dy),\quad t\in\R,\; x\in X,
\end{equation*}
for every bounded and continuous function $\ph:X\rightarrow \R$.
Such results are already contained in \cite{ouy-roc2016} (see also \cite{gei-lun2008} \cite{dap-rock2008} for the case $X=\R^n$), however we give the proofs for the convenience of the readers.

As in the finite dimensional setting, if an evolution system of measures $\{\g_r\}_{r\in\R}$ exists, then it is possible to extend each operator $\pst$ to a linear bounded operator from $L^p(X,\g_t)$ to $L^p(X,\g_s)$, denoted by $\pst^{(p)}$. Such operators are consistent, i.e. for every  $s<t$ and $f\in L^p(H, \g_t)\cap L^q(H, \g_t)$ it holds $\pst^{(p)}f=\pst^{(q)}f$. For this reason, we will omit the index $p$ if no confusion can arise, and we still denote them by $\pst$. 

In Section \ref{log-sobolev}, for the Gaussian evolution system of measures constructed in Section \ref{misureinvarianti},  we prove a family of logarithmic Sobolev inequalities,
\begin{equation}\label{log-intro}
\int_X|\ph|^p\log\left(|\ph|^p\right)\, d\g_t\leq m_t(|\ph|^p)\log\left(m_t\left(|\ph|^p\right)\right)+\kappa p^2\int_X |\ph|^{p-2}\norm{\Qp{t}\nabla \ph}^2\, \mathbbm{1}_{\{\ph\neq 0\}}d\g_t.
\end{equation}
where $\ph\in C^1_b(X)$, $t\in\R$, $p\in (1,+\infty)$, $\displaystyle{m_t(\ph):=\int_X \ph\, d\nu_t}$ and $\kappa$ is an explicit positive constant (see Theorem \ref{LOG}).

Exploiting \eqref{log-intro} we prove  a hypercontractivity result for $\{P_{s,t}\}_{(s,t)\in\overline{\Delta}}$, namely for every $(s,t)\in\overline{\Delta}$, $q>1$ and $p\leq c(s,t,q):=(q-1)e^{\frac{t-s}{2\kappa}}+1$ we have
\begin{equation}\label{iper-intro}
\norm{\pst \ph}_{L^p(X,\, \g_s)}\leq \norm{\ph}_{L^q(X,\, \g_t)},\quad \ph\in L^q(X,\, \g_t),
\end{equation}
where $\kappa$ is the same constant appearing in \eqref{log-intro} and we still denote by $\pst$ the extension of $\pst$ from $L^g(X,\gamma_t)$ to $L^g(X,\gamma_s)$ for every $g>1$.  We stress that $c(s,t,q)$ is the non autonomous version of the optimal constant in the autonomous case, see Remark \ref{ottimale}.

Finally in Section \ref{Examples} we present different examples of $\{P_{s,t}\}_{(s,t)\in\overline{\Delta}}$ that verify our assumptions. In our most significant example, $A(r)$ is the realization of a negative second order elliptic differential operator in $X=L^2(\os)$ with Dirichlet or Robin boundary conditions and smooth enough coefficients; $\os$ is a bounded open smooth subset of $\R^d$. $\{U(t,s)\}_{(s,t)\in\overline{\Delta}}$ is the evolution operator associated to $\{A(r)\}_{r\in\R}$ according to Acquistapace-Terreni \cite{MR945820,MR934508} and $B(r):=(-A(r))^{-\gamma}$ with $\gamma\geq 0$. In this case $\{P_{s,t}\}_{(s,t)\in\overline{\Delta}}$ is associated to the time inhomogeneous Markov process that is the unique mild solution of the non autonomous stochastic heat equation
\begin{equation}\label{SPDE}
dZ(t)=A(t)Z(t)dt+(-A(t))^{-\gamma}dW(t),\qquad Z(0)=x\in X,
\end{equation}
where $\{W(t)\}_{t\in\R}$ is a $X$-cylindrical Wiener process. We refer to \cite{ver-zim2008} for a study of SPDEs of the type \eqref{SPDE}.

In finite dimension evolution operators for Kolmogorov equations have already been widely investigated, see for instance \cite{add2013,add-ang-lor2017,ang-lor2014,ang-lor20142,ang-lor-lun2013,dap-lun2007,gei-lun2008,gei-lun2009,kun-lor-lun2010}. Instead, in infinite dimension, only a few results are available, see for instance \cite{cerlun,defazio22,ouy-roc2016}.

In the autonomous case, where $\pst=P_{0,t-s}$ is a semigroup, formulas similar to \eqref{1int} and \eqref{2int} are known for suitable functions $\varphi$ accordingly to the theory of weakly continuous semigroups (see \cite{MR1293091,MR4553575,MR1724248}). Currently in the non-autonomous case there is no similar theory that can be exploited. We remark that we cannot use the abstract results on evolution operators of \cite{MR934508,MR533824}, since the family of realizations of the operators $\{L(r)\}_{r\in \R}$ in spaces of bounded and continuous functions from $X$ to $\R$ does not satisfy their assumptions.
If $X=\R^n$, then \eqref{1int} and \eqref{2int} were proven in \cite{kun-lor-lun2010} for smooth functions with compact support, but in infinite dimension compactly supported functions are not relevant and must be replaced by other classes of functions such as $\mathcal{E}_t(X)$ defined in \eqref{trigonometriciR}, which is dense in $L^p(X,\g)$ for every $p\in(1,+\infty)$ and for every Borel probability measure $\g$ in X. Moreover, such spaces depend explicitly on $t$,  so it is not possible to use a technique similar to the one presented in \cite{kun-lor-lun2010} to prove \eqref{1int} and \eqref{2int}.

Still if $X=\R^n$ \eqref{log-intro} was proven in \cite{ang-lor-lun2013}. In infinite dimension a hyperboundedness result for $\{\pst\}_{(s,t)\in\overline{\Delta}}$ was proven in \cite[Thm\, 5.9]{ouy-roc2016} as a consequence of a Harnack type inequality; however the constant is not necessarily $1$ and the result is significant only under suitable assumptions which imply that $\pst$ is Strong-Feller, namely when $\pst$ maps Borel bounded functions into continuous functions.

\section{Notations}\label{notations}

If $(X, \norm{\, \cdot\, }_X)$ and $(Y, \norm{\, \cdot\, }_Y)$ are real Banach spaces we denote by $\op(X;Y)$ the space of bounded linear operators from $X$ to $Y$. If $Y=\R$ we simply write $X^\star$ instead of $\op(X;\R)$. For $k\geq 2$, $\op^{k}(X; Y)$ is the space of the $k$-linear bounded operators $T: X^k\longrightarrow Y$ endowed with the norm $$\norm{T}_{\op^{k}(X; Y)}=\sup\biggl\{\frac{\norm{T(x_1,...,x_k)}_Y}{\norm{x_1}_X\cdot\cdot\cdot\norm{x_k}_X}:\ x_1,...,x_k\in X\sm\{0\}\biggr\}.$$ If $Y=\R$ we set $\mathcal{L}^0(X):=X$, $\op(X):=\op(X; X)$ and $\op^k(X):=\op^k(X;X)$ for every $k\geq 2$.

Given $A:D(A)\subseteq X\longrightarrow X$ and $V$ be a closed subspace of $X$, we call part of $A$ in $V$ the operator $\tilde{A}$ with domain $D(\tilde{A})=\{x\in D(A)\cap V\ |\ Ax\in V\}$ such that $\tilde{A}x=Ax$ for all $x\in D(\tilde{A})$.

 By $B_b(X;Y)$ and $C_b(X;Y)$ we denote the space of bounded Borel functions from $X$ to $Y$ and the space of bounded and continuous functions from $X$ to $Y$, respectively. We endow them with the sup norm $$\norm{F}_\infty=\sup_{x\in X}\norm{F(x)}_Y.$$  If $Y=\R$, we simply write $B_b(X)$ and $C_b(X)$ instead of $B_b(X;\R)$ and $C_b(X;\R)$, respectively.

Let $F:X\longrightarrow Y$. We say that $F$ is Fréchet differentiable at $x\in X$ if there exists $T_x\in\op(X,Y)$ such that
\begin{equation}\label{yfrechet2}
\lim_{\norm{h}_{X}\rightarrow 0}\frac{\norm{F(x+h)-F(x)-T_x(h)}_Y}{\norm{h}_{X}}=0.
\end{equation}
$T_x$ is the Fréchet differential of $F$ at $x$ and we denote it by $DF(x)$. We say that $F$ is Fréchet differentiable if it is Fréchet differentiable at every $x\in X$. 
If $\ph:X\longrightarrow \R$ is Fréchet differentiable at $x\in X$, we say that $\ph$ is twice Fréchet differentiable at $x$ if $D \ph:X\longrightarrow X^\star$ is Fréchet differentiable at $x$. 
 We denote by $D^2 \ph$ the unique element of $\op^{2}(X; \R)$ such that
 \[
 D^2 \ph(x)(k,h):=(T_x k)(h),\quad h, k\in X,
\]
where $T_x$ is the operator in \eqref{yfrechet2} with $F$ replaced by $D \ph$ and $Y=X^\star$. In a similar way we define the $k$-times Fréchet differentiable functions $\ph:X\rightarrow\R$ and we denote by $D^{k} \ph: X\longrightarrow \op^{k}(X; \R)$ its $k$-Fréchet derivatives.

For every $k\in \Nat\cup\{0\}$, we set $C^0_b(X)=C_b(X)$ and for every $k\geq 1$ $C^k_{b}(X)$ is the subspace of $C_b(X)$ consisting of all functions $f:X\longrightarrow \R$ $k$-times Fréchet differentiable. We endow $C^k_b(X)$  with the norm
\begin{align*}
\norm{\ph}_{C^k_b(X)}:= \norm{\ph}_\infty+\sum_{j=1}^k\sup_{x\in X}\norm{D^j \ph(x)}_{\op^{j}(X; \R)}.
\end{align*}

Now we assume that $X$ is a separable Hilbert space equipped with the inner product $\scal{\cdot}{\cdot}_{X}$.

Let $\ph\in C^k_b(X)$. By the Riesz representation theorem,  for every $j=1,...,k$, for every $x\in X$ there are unique $S^{j}_x\in \op^{j-1}(X)$ such that
\begin{align*}
D^{j} \ph(x)(h_1,...,h_j)=\langle S^j_x(h_1,...,h_{j-1}),h_j\rangle_{X},\qquad h_1,...,h_j\in X.
\end{align*}
We set $\nabla^j\ph(x):=S^j_x$ and we call $\nabla\ph(x)$ and $\nabla^2\ph(x)$ the gradient of $\ph$ and Hessian operator of $\ph$ at $x\in X$, respectively. Moreover $$\norm{\ph}_{C^k_b(X)}= \norm{\ph}_\infty+\sum_{j=1}^k\sup_{x\in X}\norm{\nabla^j \ph(x)}_{\op^{j-1}(X)}.$$

 Let $\{e_k\}_{k\in\Nat}$ be an orthonormal basis of $X$ and let $\ph:X\rightarrow$ be a $k$-times Fréchet differentiable function. As in the finite dimensional case, for all $j=1,...,k$ we define the partial derivatives of $\ph$ of order $j$ at $x\in X$ along the directions of $\{e_k\}_{k\in\Nat}$. Moreover for all $j=1,...,k$ it can be shown that 
\begin{align*}
\frac{\de^j \ph}{\de e_{i_1},...,\de e_{i_j}}(x):=\ix{\nabla^j\ph(x)(e_{i_1},...,e_{i_{j-1}})}{e_{i_j}},\qquad  i_1,...,i_j\in\Nat.
\end{align*}

We say that $Q\in\mathcal{L}(X)$ is non-negative (respectively negative, non-positive, positive) if for every $x\in X\sm\{0\}$
\[
\langle Qx,x\rangle_X\geq 0\ (<0,\ \geq,\ >0).
\]
Let $Q\in\mathcal{L}(X)$ be a non-negative and self-adjoint operator. We say that $Q$ is a trace class operator if
\begin{align}\label{trace_defn}
\Tr{Q}:=\sum_{n=1}^{+\infty}\langle Qe_n,e_n\rangle_X<+\infty,
\end{align}
for some (and hence for all) orthonormal basis $\{e_n\}_{n\in\Nat}$ of $X$. We recall that the trace operator, defined in \eqref{trace_defn}, is independent of the choice of the orthonormal basis.
\noindent
We denote by $\op_1(X)$ the subspace of $\op(X)$ consisting of all the self-adjoint operators having finite trace and by $\op^+_1(X)$ the subspace of $\op(X)$ consisting of all non-negative self-adjoint operators having finite trace.

Let $\mu$ be a Borel probability measure on $X$. We denote by $\widehat{\mu}$ its characteristic function defined by
\[
\widehat{\mu}(x):=\int_X e^{i\ix{x}{y}}\mu(dy).
\]
Let $Q$ be a self-adjoint non-negative trace class operator and let $m\in X$. We denote by $\mathcal{N}_{m,Q}$ the Gaussian measure in $X$ with mean $m$ and covariance operator $Q$. We recall that
\begin{equation*}
\widehat{\mathcal{N}}_{m,Q}(x):=\int_X e^{i\ix{x}{y}}\mathcal{N}_{m,Q}(dy)=e^{i\ix{m}{x}-\frac{1}{2}\ix{Qx}{x}},\quad x\in X.
\end{equation*}

\subsection{Pseudo-inverse and differentiability along subspaces}\label{HR}
Let $(X, \ix{\cdot}{\cdot},\norm{\,\cdot\,}_X)$ be a separable Hilbert space.
Let $R\in\mathcal{L}(X)$ be a self-adjoint operator. We denote by $\ker R$ the kernel of $R$ and by $(\ker R)^{\bot}$ its orthogonal subspace in $H$. 

We denote by $H_R:=R(X)$ the range of the operator $R$ and we recall that $(\ker R)^{\bot}=\overline{R(X)}$. In order to provide $H_R$ with a Hilbert structure, we recall that the restriction $R_{|_{(\ker R)^{\bot}}}$ is a injective operator, and so
\[
R_{|_{(\ker R)^{\bot}}}:(\ker R)^{\bot}\subseteq X\rightarrow H_R
\] 
is bijective. We call pseudo-inverse of $R$ the liner bounded operator $R^{-1}:H_R\rightarrow X$ where for all $y\in H_R$ $R^{-1}y$ is the unique $x\in (\ker R)^{\bot}$ such that $Rx=y$, see \cite[Appendix C]{LI-RO1}. We introduce the scalar product 
\begin{equation}\label{Rprod}
\scal{x}{y}_{H_R}:=\langle R^{-1}x,R^{-1}y\rangle_X,\quad x,y\in H_R
\end{equation}
and its associated norm $\norm{x}_{H_R}:=\|R^{-1}x\|_X$. With this inner product $H_R$ is a separable Hilbert space and a Borel subset of $X$ (see \cite[Theorem 15.1]{KE1}). A possible orthonormal basis of $H_R$ is given by $\{Re_k\}_{k\in\Nat}$, where $\{e_k\}_{k\in\Nat}$ is any orthonormal basis of $(\ker R)^{\bot}$. Denoting by $P$ the orthogonal projection on $\ker R$, we recall that 
\begin{align}
RR^{-1} &=\Id_{H_R},\qquad R^{-1}R=\Id_{X}-P.\label{orietta1}
\end{align}
Notice that for every $x\in H_R$
\[
\norm{x}_X=\|RR^{-1}x\|_X\leq \norm{R}_{\mathcal{L}(X)}\|R^{-1}x\|_{X}\leq \norm{R}_{\mathcal{L}(X)}\norm{x}_{H_R}.
\]

The following notion of differentiability first appeared in \cite{GRO1} and \cite{KUO1}.

\begin{dfn}\label{intr_defn_diff} 
We say that a function $\ph:X\rightarrow \R$ is $H_R$-differentiable at $x\in X$ if there exists  $L_x\in\mathcal{L}(H_R;\R)$ such that 
\begin{align*}
\lim_{\norm{h}_{H_R}\rightarrow 0}\frac{|\ph(x+h)-\ph(x)-L_xh|}{\norm{h}_{H_R}}=0.
\end{align*}
In this case $L_x$ is unique and we set $D_{H_R}\ph(x):=L_x$. We say that $\ph$ is $H_R$-differentiable if $\ph$ is $H_R$-differentiable at every $x\in X$.
Since $H_R$ is a Hilbert space, by the Riesz representation theorem for every $x\in X$ there exists a unique $l_x\in H_R$ such that
\[
D_{H_R} \ph(x)h=\langle l_x,h\rangle_{H_R},\qquad h\in H_R.
\]
We call $l_x$ the $H_R$-gradient of $\ph$ at $x\in X$ and we denote it by $\nabla_{H_R} \ph(x)$. We denote by $C^1_{b,H_R}(X)$ the subspace of $C_b(X)$ of the $H_R$-differentiable functions $\ph:X\rightarrow\R$ such that $\nabla_{H_R}\ph\in C_b(X;H_R)$.
\end{dfn}

\begin{prop}\label{dalpha}
If $\ph\in C^1_b(X)$, then $\ph\in C^1_{b,H_R}(X)$. Moreover for every $x\in X$ $\nabla_{H_R}\ph(x)=R^2\nabla \ph(x)$ and $\norm{\nabla_{H_R} \ph (x)}_{H_R}=\norm{R\nabla \ph (x)}_{X}$. 
\end{prop}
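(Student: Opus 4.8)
The plan is to deduce $H_R$-differentiability from ordinary Fréchet differentiability using only the continuous embedding $H_R\hookrightarrow X$ recalled just before Definition \ref{intr_defn_diff}, and then to pin down the $H_R$-gradient by combining the Riesz representation theorem with the identities \eqref{orietta1}. Fix $x\in X$. The natural candidate for $D_{H_R}\ph(x)$ is the restriction to $H_R$ of the functional $h\mapsto\ix{\nabla\ph(x)}{h}$; it is linear, and by the embedding estimate $\norm{h}_X\leq\norm{R}_{\mathcal{L}(X)}\norm{h}_{H_R}$ it satisfies $|\ix{\nabla\ph(x)}{h}|\leq\norm{R}_{\mathcal{L}(X)}\norm{\nabla\ph(x)}_X\norm{h}_{H_R}$, so it lies in $\mathcal{L}(H_R;\R)$. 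The same estimate shows that $\norm{h}_{H_R}\to 0$ forces $\norm{h}_X\to 0$; hence writing the difference quotient with respect to $\norm{h}_{H_R}$ as the product of the difference quotient with respect to $\norm{h}_X$ (which tends to $0$ because $\ph\in C^1_b(X)$) and the bounded factor $\norm{h}_X/\norm{h}_{H_R}\leq\norm{R}_{\mathcal{L}(X)}$ gives $H_R$-differentiability of $\ph$ at $x$ with $D_{H_R}\ph(x)h=\ix{\nabla\ph(x)}{h}$.

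The second step is to show that this functional is represented in $H_R$ by $R^2\nabla\ph(x)$. First, $R\nabla\ph(x)\in R(X)=H_R$, hence $R^2\nabla\ph(x)=R\big(R\nabla\ph(x)\big)\in H_R$ as well. Next, since $R\nabla\ph(x)\in\overline{R(X)}=(\ker R)^{\bot}$, the second identity in \eqref{orietta1} yields $R^{-1}R^2\nabla\ph(x)=(\Id_X-P)(R\nabla\ph(x))=R\nabla\ph(x)$, where $P$ is the orthogonal projection on $\ker R$. Using the definition \eqref{Rprod} of the inner product of $H_R$, the self-adjointness of $R$, and the first identity $RR^{-1}=\Id_{H_R}$ in \eqref{orietta1}, for every $h\in H_R$
\begin{equation*}
\scal{R^2\nabla\ph(x)}{h}_{H_R}=\ix{R^{-1}R^2\nabla\ph(x)}{R^{-1}h}=\ix{R\nabla\ph(x)}{R^{-1}h}=\ix{\nabla\ph(x)}{RR^{-1}h}=\ix{\nabla\ph(x)}{h}.
\end{equation*}
By the uniqueness in the Riesz representation theorem this identifies $\nabla_{H_R}\ph(x)=R^2\nabla\ph(x)$, and evaluating the norm directly, $\norm{\nabla_{H_R}\ph(x)}_{H_R}=\norm{R^{-1}R^2\nabla\ph(x)}_X=\norm{R\nabla\ph(x)}_X$.

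Finally, to conclude $\ph\in C^1_{b,H_R}(X)$ it remains to check that $\nabla_{H_R}\ph=R^2\nabla\ph(\cdot)$ belongs to $C_b(X;H_R)$: the norm identity of the previous step gives $\norm{\nabla_{H_R}\ph(x)-\nabla_{H_R}\ph(y)}_{H_R}=\norm{R\nabla\ph(x)-R\nabla\ph(y)}_X\leq\norm{R}_{\mathcal{L}(X)}\norm{\nabla\ph(x)-\nabla\ph(y)}_X$, so continuity of $\nabla\ph:X\to X$ transfers to continuity of $\nabla_{H_R}\ph:X\to H_R$, while $\sup_{x\in X}\norm{\nabla_{H_R}\ph(x)}_{H_R}\leq\norm{R}_{\mathcal{L}(X)}\sup_{x\in X}\norm{\nabla\ph(x)}_X<+\infty$ together with the boundedness of $\ph$ finishes the argument. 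I do not expect a genuine obstacle; the only delicate point is the bookkeeping with the pseudo-inverse — in particular noticing that $R\nabla\ph(x)$ lies in $(\ker R)^{\bot}$ so that $R^{-1}R$ acts as the identity on it — and making sure the candidate differential is continuous for the stronger $H_R$-norm rather than merely for $\norm{\,\cdot\,}_X$.
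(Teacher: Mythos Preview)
Your proof is correct and follows essentially the same route as the paper: transfer Fr\'echet differentiability to $H_R$-differentiability via the continuous embedding $H_R\hookrightarrow X$, then identify the $H_R$-gradient as $R^2\nabla\ph(x)$ using the pseudo-inverse identities \eqref{orietta1}. Your bookkeeping with $R^{-1}R^2\nabla\ph(x)=R\nabla\ph(x)$ is in fact a bit cleaner than the paper's (which writes the identity via $R^{-2}R^2$), and you are slightly more explicit about boundedness of $\nabla_{H_R}\ph$, but the argument is the same.
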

\begin{proof}
Let $\ph\in C^1_b(X)$, $x\in X$ and $h\in H_R\sm\{0\}$. We have
\begin{align}
\abs{\frac{\ph (x+h)-\ph (x)-\ix{\nabla \ph (x)}{h}}{\norm{h}_{H_R}}}=\abs{\frac{\ph (x+h)-\ph (x)-\ix{\nabla \ph (x)}{h}}{\norm{h}_X}}\frac{\norm{h}_X}{\norm{h}_{H_R}}.
\end{align} 
Since $H_R$ is continuously embedded in $X$, $\displaystyle{\frac{\norm{h}_X}{\norm{h}_{H_R}}}$ is bounded and $\ph $ is $H_R$-differentiable at $x\in X$. 
Moreover $\ix{\nabla \ph (x)}{h}=\iprod{\nabla_{H_R} \ph (x)}{h}_{H_R}$ for $h\in H_R$. Let $P$ be the orthonormal projection on $\ker R$, we get 
\begin{align}
\iprod{\nabla_{H_R} \ph (x)}{h}_{H_R}&=\ix{\nabla \ph (x)}{h}=\ix{\nabla \ph (x)}{(I-P)h}=\ix{\nabla \ph (x)}{R^{-2}R^2h}\nonumber \\
&=\iprod{R^2\nabla \ph (x)}{h}_{H_R},\nonumber \\
\norm{\nabla_{H_R} \ph (x)}_{H_R}^2&=\iprod{R^2\nabla \ph (x)}{R^2\nabla \ph (x)}_{H_R}=\ix{R^{-1}R^2\nabla \ph (x)}{R^{-1}R^2\nabla \ph (x)}\nonumber\\
&=\ix{(I-P)R\,\nabla \ph (x)}{(I-P)R\,\nabla \ph (x)}=\norm{R\,\nabla \ph (x)}_{X}^2. \nonumber
\end{align} 

Since $\nabla \ph (x)$ is continuous at $x$ for all $x\in X$, we obtain that $\ph$ belongs to $C^1_{b,H_R}(X)$.
\end{proof}

\section{The evolution operator \texorpdfstring{$\pst$}~~and gradient estimates}\label{pstegradiente}

Let $(X, \ix{\cdot}{\cdot},\norm{\,\cdot\,}_X)$ be a separable Hilbert space. In this section we define a class of evolution operators acting on $B_b(X)$, which is the non autonomous version of Mehler semigroups.  

Let us state the basic hypothesis of our framework. Let $\Delta=\{(s,t)\in\R^2\ \mbox{s.t.}\ s<t\}$.
\begin{ip}\label{1} 
\leavevmode
\begin{enumerate} 
\item $\{U(t,s)\}_{(s,t)\in\overline{\Delta}}\subseteq\op(X)$ is a strongly continuous evolution operator, namely for every $x\in X$ the map
\begin{equation}
(s,t)\in\overline{\Delta}\longmapsto U(t,s)x\in X,
\end{equation}
is continuous and
\begin{enumerate}
\item $U(t,t)=I$ for every $t\in\R$,
\item $U(t,r)U(r,s)=U(t,s)$ for $s\leq r\leq t$.
\end{enumerate}
Moreover we assume that there exist $M>0$ and $\zeta\in\R$ such that 
\begin{equation}\label{omega}
\norm{U(t,s)}_{\op(X)}\leq Me^{-\zeta(t-s)}.
\end{equation}
\item $\{B(t)\}_{t\in\R}\subseteq\op(X)$ is a bounded family of strongly continuous linear and bounded operators, namely
\begin{enumerate}
\item there exists $K>0$ such that 
\begin{equation}
\sup_{t\in\R}\norm{B(t)}_{\op(X)}\leq K,
\end{equation}
\item the map 
\begin{equation}
t\in\R\mapsto B(t)x\in X
\end{equation}
is continuous for every $x\in X$.
\end{enumerate}
\item The map $f:\R\longrightarrow X$ is bounded and measurable.
\item For every $(s,t)\in\Delta$ the operator $Q(t,s):X\rightarrow X$ given by
\begin{align} \label{qtscov}
Q(t,s)&=\int_s^tU(t,r)B(r)B(r)^\star U(t,r)^\star \,dr,
\end{align}
has finite trace.
\end{enumerate}
\end{ip}
\noindent In this paper we will study the evolution operator $\{P_{s,t}\}_{(s,t)\in\overline{\Delta}}$ defined by
\begin{align}
P_{r,r}&=I,\qquad \forall\; r\in\R, \\
P_{s,t}\ph(x)&=\int_X\ph(y)\,\Ng_{U(t,s)x ,Q(t,s)}(dy), \ \ (s,t)\in\Delta,\ \ph\in B_b(X),
\end{align}
where $\Ng_{U(t,s)x ,Q(t,s)}$ is the Gaussian measure on $\mathcal{B}(X)$ with mean $U(t,s)x$
and covariance operator $Q(t,s)$ given by \eqref{qtscov}.

\begin{oss}
We emphasize that, by the Fernique Theorem, it is possible to define $\pst$ on Borel measurable functions with power growth, namely for Borel measurable functions $\ph:X\longrightarrow\R$ such that there exists $C,m>0$ such  that 
\begin{equation}\label{limsup}
\abs{\ph(x)}\leq C(1+\norm{x}_X^m),\quad x\in X.
\end{equation}
Moreover $\pst$ leaves invariant the space of Borel measurable functions having fixed power growth $m>0$. Indeed, by \cite[Thm. 2.6]{cerlun} $\pst\ph$ is a  Borel measurable. Moreover, if $\ph$ satisfies \eqref{limsup}, then for every $x\in X$ we have
\begin{align*}
\abs{\pst\ph(x)}&\leq \int_X \abs{\ph(y+\uts x)}\,N_{0,Q(t,s)}(dy)\leq C \int_X \left[1+\left(\norm{y}_X+\norm{\uts x}_X\right)^m\right]\,N_{0,Q(t,s)}(dy)\nonumber\\
&\leq C_m\left[1+ \norm{U(t,s)}^m_{\mathcal{L}(X)}\norm{x}_X^m+ \int_X\norm{y}_X^{m}\,\Ng_{0,Q(t,s)}(dy)\right], 
\end{align*}
where $C_m$ is a positive constant.
\end{oss}

We conclude this section studying some regularization properties of $\pst$. In the autonomous case the smoothing properties of Ornstein-Uhlenbeck semigroups are well known, see for instance \cite{bigna22,bigna22col,MR2299922,MR4011050,MR4311102,maspri,MR1976297,MR1985790,lun-pal2020}. Time dependency of diffusion operator ($Q(t):=B(t)B(t)^\star$) yelds significant differences in the regularity properties of $\pst$. In order to study such properties, for any $r\in\R$ we define the space
\[
\ac_r:=H_{\Qp{r}}:=\Qp{r}(X).
\]
We refer to Subsection \ref{HR} for a description of this space.

Let $E$ be a subspace of $X$. In the following we often denote $U(t,s)_{|_E}$ by $U(t,s)$ by abuse of language.

\begin{prop}\label{regck} 
Assume that Hypothesis \ref{1} holds true and that for every $(s,t)\in\overline{\Delta}$
\[
\uts\in\op(\ac_s;\ac_t).
\]
Then for every $(s,t)\in\overline{\Delta}$ we have
\begin{align*}
P_{s,t}(C^1_{b,\ac_t}(X))\subseteq C^1_{b,\ac_s}(X),
\end{align*}
(see Definition \ref{intr_defn_diff}).
Moreover for every $\varphi\in C^1_{b,H_t}(X)$, $x\in X$ and $h\in H$ 
\begin{align}
&D_{\ac_s}(\pst\ph)(x)h=\pst \bigl(D_{\ac_t}\ph(\cdot)\uts h\bigr)(x),\label{dk}\\
&\norm{\nabla_{\ac_s}\pst\ph(x)}_{\ac_s}\leq \norm{\uts}_{\op(\ac_s;\ac_t)}\pst\left(\norm{\nabla_{\ac_t}\ph(\cdot)}_{\ac_t}\right)(x)\label{stimachemiserve1}. 
\end{align}
\end{prop}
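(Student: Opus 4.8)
The plan is to verify the defining limit of $\ac_s$-differentiability for $\pst\ph$ by differentiating under the integral sign in the Mehler formula, exploiting the hypothesis $\uts\in\op(\ac_s;\ac_t)$ to transfer $\ac_t$-directions at the target level to $\ac_s$-directions at the source level. First I would fix $\ph\in C^1_{b,\ac_t}(X)$, $(s,t)\in\Delta$, $x\in X$ and $h\in\ac_s$, and write, using the translation form of the Gaussian measure,
\begin{equation}\label{prop-mehler}
\pst\ph(x+h)-\pst\ph(x)=\int_X\bigl(\ph(y+\uts x+\uts h)-\ph(y+\uts x)\bigr)\,\Ng_{0,Q(t,s)}(dy).
\end{equation}
Because $\uts h\in\ac_t$ by assumption, and $\ph\in C^1_{b,\ac_t}(X)$ so that $\ph$ is $\ac_t$-differentiable with bounded continuous $\ac_t$-gradient, the integrand is controlled by a mean-value-type estimate: for each fixed $y$ one has $|\ph(y+\uts x+\uts h)-\ph(y+\uts x)-D_{\ac_t}\ph(y+\uts x)\uts h|=o(\norm{\uts h}_{\ac_t})$ uniformly in $y$, and $\norm{\uts h}_{\ac_t}\le\norm{\uts}_{\op(\ac_s;\ac_t)}\norm{h}_{\ac_s}$. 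Dividing \eqref{prop-mehler} by $\norm{h}_{\ac_s}$, sending $\norm{h}_{\ac_s}\to 0$, and passing the limit inside the integral (justified by the uniform bound on the remainder together with the uniform bound $\norm{D_{\ac_t}\ph}_\infty$ and dominated convergence), I obtain that $\pst\ph$ is $\ac_s$-differentiable at $x$ with
\begin{equation*}
D_{\ac_s}(\pst\ph)(x)h=\int_X D_{\ac_t}\ph(y+\uts x)\uts h\,\Ng_{0,Q(t,s)}(dy)=\pst\bigl(D_{\ac_t}\ph(\cdot)\uts h\bigr)(x),
\end{equation*}
which is exactly \eqref{dk}.

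Next I would check the remaining qualitative claims. Boundedness of $D_{\ac_s}(\pst\ph)$: from \eqref{dk} and Proposition \ref{dalpha} applied in the space $\ac_t$ (identifying $R=\Qp{t}$, so $\norm{D_{\ac_t}\ph(z)h}\le\norm{\nabla_{\ac_t}\ph(z)}_{\ac_t}\norm{h}_{\ac_t}$) one gets
\begin{equation*}
\abs{D_{\ac_s}(\pst\ph)(x)h}\le\pst\bigl(\norm{\nabla_{\ac_t}\ph(\cdot)}_{\ac_t}\bigr)(x)\,\norm{\uts}_{\op(\ac_s;\ac_t)}\norm{h}_{\ac_s},
\end{equation*}
and since $\norm{\nabla_{\ac_t}\ph}_{\ac_t}$ is bounded and $\pst$ is a contraction on $B_b(X)$, $\nabla_{\ac_s}\pst\ph$ is bounded; taking the supremum over $h$ with $\norm{h}_{\ac_s}\le 1$ in this inequality also yields \eqref{stimachemiserve1}. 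Continuity of $x\mapsto\nabla_{\ac_s}\pst\ph(x)$ in $\ac_s$: I would use \eqref{dk} together with the strong continuity of $\uts$, the continuity of $z\mapsto\nabla_{\ac_t}\ph(z)$ in $\ac_t$, the boundedness of $\nabla_{\ac_t}\ph$, and dominated convergence in the Mehler integral; a small extra point is to represent $\nabla_{\ac_s}\pst\ph(x)$ via the Riesz isomorphism of $\ac_s$ and check that the map into $\ac_s$ (not just the action on each fixed $h$) is continuous, which follows because the estimate above is uniform in $h$. This gives $\pst\ph\in C^1_{b,\ac_s}(X)$, i.e. the asserted inclusion.

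The main obstacle I anticipate is the uniform-in-$y$ control needed to differentiate under the integral sign in \eqref{prop-mehler}: the $\ac_t$-differentiability of $\ph$ is only pointwise, so the remainder $o(\norm{\cdot}_{\ac_t})$ is a priori not uniform in the base point $y+\uts x$. I would handle this by writing $\ph(y+\uts x+\uts h)-\ph(y+\uts x)=\int_0^1 D_{\ac_t}\ph(y+\uts x+\tau\uts h)\uts h\,d\tau$ (valid since $\ph$ is $\ac_t$-differentiable along the segment and the directional derivative is continuous, $\ph\in C^1_{b,\ac_t}(X)$), so that
\begin{equation*}
\frac{\pst\ph(x+h)-\pst\ph(x)}{\norm{h}_{\ac_s}}-\pst\bigl(D_{\ac_t}\ph(\cdot)\tfrac{\uts h}{\norm{h}_{\ac_s}}\bigr)(x)=\int_X\!\int_0^1\!\bigl(D_{\ac_t}\ph(y+\uts x+\tau\uts h)-D_{\ac_t}\ph(y+\uts x)\bigr)\tfrac{\uts h}{\norm{h}_{\ac_s}}\,d\tau\,\Ng_{0,Q(t,s)}(dy);
\end{equation*}
the integrand is bounded by $2\norm{\nabla_{\ac_t}\ph}_\infty\norm{\uts}_{\op(\ac_s;\ac_t)}$ and, for each fixed $y$, tends to $0$ as $\norm{h}_{\ac_s}\to 0$ by continuity of $\nabla_{\ac_t}\ph$, so dominated convergence closes the argument. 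The term $\pst\bigl(D_{\ac_t}\ph(\cdot)\tfrac{\uts h}{\norm{h}_{\ac_s}}\bigr)(x)$ then has to be compared with $D_{\ac_s}(\pst\ph)(x)\tfrac{h}{\norm{h}_{\ac_s}}$ along directions, which is precisely \eqref{dk} being established in the limit; one reads off the candidate $\ac_s$-gradient using that $h\mapsto\pst(D_{\ac_t}\ph(\cdot)\uts h)(x)$ is linear and bounded on $\ac_s$ by the estimate above, hence represented by an element of $\ac_s$.
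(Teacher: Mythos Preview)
Your argument is correct: the mean-value representation along the segment, dominated convergence, and the extraction of \eqref{stimachemiserve1} from \eqref{dk} by taking the supremum over $\norm{h}_{\ac_s}\le 1$ all go through as you describe. The paper itself does not prove this proposition at all; it simply cites \cite{defazio22} for \eqref{dk} and notes that \eqref{stimachemiserve1} follows immediately, so your self-contained argument is in fact more detailed than what the paper provides and is exactly the standard differentiation-under-the-integral-sign proof one expects the cited reference to contain.

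One small remark: your invocation of Proposition~\ref{dalpha} is unnecessary and slightly off-target (that proposition relates Fr\'echet and $H_R$-derivatives for $C^1_b$ functions, which is not what you need); all you actually use is the Riesz representation $D_{\ac_t}\ph(z)k=\langle\nabla_{\ac_t}\ph(z),k\rangle_{\ac_t}$, which is already part of Definition~\ref{intr_defn_diff}. Also, for the continuity of $x\mapsto\nabla_{\ac_s}\pst\ph(x)$ in $\ac_s$, it is cleanest to observe that $\nabla_{\ac_s}\pst\ph(x)=\bigl(\uts|_{\ac_s}\bigr)^\star\!\int_X\nabla_{\ac_t}\ph(y+\uts x)\,\Ng_{0,Q(t,s)}(dy)$ and then apply dominated convergence for the Bochner integral in $\ac_t$; this makes the ``uniform in $h$'' point you flag automatic.
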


\begin{proof} \eqref{dk} is proven in \cite{defazio22} and \eqref{stimachemiserve} is a straightforward consequence of \eqref{dk}.
\end{proof}

\begin{oss}
In view of Proposition \ref{dalpha} for every $\varphi\in C^1_{b}(X)$, $x\in X$ and $(s,t)\in\Delta$, inequality \eqref{stimachemiserve1} reads as
\begin{equation}\label{stimachemiserve}
\norm{Q(s)^{\frac{1}{2}}\nabla\pst\ph(x)}_{X}\leq \norm{\uts}_{\op(\ac_s;\ac_t)}\pst\left(\norm{Q(t)^{\frac{1}{2}}\nabla\ph(\cdot)}_{X}\right)(x). 
\end{equation}
\end{oss}

\section{Connections between \texorpdfstring{$\{\pst\}_{(s,t)\in\Delta}$}~ and \texorpdfstring{$\{L(r)\}_{r\in\R}$}~}\label{connessione}
One of the main issues working on non autonomous problems is the lack of similar theories to the ones of strongly continuous or analytic semigroups for evolution operators. We cannot even define the weak genertor of $\pst$ via Laplace transform as in the case of Ornstein-Uhlenbeck semigroups in \cite{MR1293091,MR4553575,MR1724248}.
In this section we prove that for suitable functions $\varphi:X\rightarrow \R$ we have
\begin{align}
&\frac{\de}{\de s} \pst \ph (x)=-L(s)\pst \ph (x),\quad (s,t)\in\Delta,\; x\in X,\label{deri1}\\
&\frac{\de}{\de t} \pst \ph (x)=\pst L(t) \ph (x),\quad (s,t)\in\Delta,\; x\in X,\label{deri2}
\end{align}
where $\{L(r)\}_{r\in\R}$ is the family of operators given by
\begin{equation}\label{OU}
L(r)\ph(x)=\frac{1}{2}\mbox{\rm Tr}\Bigl(Q(r)D^2\ph(x)\Bigr)+\ix{x}{A(r)^\star\nabla\ph(x)},
\end{equation} 
and $\{A(r)\}_{t\in\R}$ is a family of linear and not necessarily bounded operators associated to $\{U(t,s)\}_{(s,t)\in\overline{\Delta}}$ in the following way.

\begin{ip}\label{2bis}
Assume that Hypothesis \ref{1} holds true and that in addition there exists a family of linear operators $A(r):D(A(r))\subseteq X\rightarrow X$, $r\in\R$, satisfying
\begin{enumerate}
\item[(i)] $D(A(r))$ and $D(A(r)^\star)$ are dense in $X$ for every $r\in\R$. 
\item[(ii)] For every $(s,t)\in\Delta$ we have
\begin{align*}
&\uts D(A(s))\subseteq D(A(t)),\\
&\uts^\star D(A(t)^\star)\subseteq D(A(s)^\star).
\end{align*}
\item[(iii)] For every $(s,t)\in\Delta$ and  $x\in D(A(s))$ we have
\begin{align}
&\frac{\de}{\de s}U(t,s)x=-U(t,s)A(s)x,\label{deri1A}\\
&\frac{\de}{\de t}U(t,s)x=A(t)U(t,s)x.\label{deri2A}
\end{align}
\end{enumerate}
\end{ip}

\begin{oss} In Section \ref{Examples} we show that if the  family $\{A(r)\}_{r\in\R}$ is associated to a non autonomous abstract parabolic problem in the sense of Acquistapace-Terreni (\cite{MR945820,MR934508}) then $\{A(r)\}_{r\in\R}$ satisfies Hypothesis \ref{2bis}. 
\end{oss}

\begin{oss}
By Hypotheses \ref{2bis} it follows immediately that for every $(s,t)\in\Delta$ and $x\in D(A(t)^\star)$ we have 
\begin{align}
&\frac{\de}{\de s}U(t,s)^\star x=-A(s)^\star U(t,s)^\star x,\label{deri1A*}\\
&\frac{\de}{\de t}U(t,s)^\star x=U(t,s)^\star A(t)^\star x.\label{deri2A*}
\end{align}
\end{oss}

\subsection{Cylindrical functions}\label{exp}

In this subsection we define a space of smooth cylindrical functions such that \eqref{deri1} and \eqref{deri2} hold true. We define suitable trigonometric polynomials on $X$ and we introduce the space of Bohr almost periodic functions that will be crucial in Section \ref{log-sobolev}.

Throughout this subsection we fix $r\in\R$. 

\begin{dfn}[Trigonometric polynomials]\label{trigometrici}
Let $V$ be a subspace of $X$. We denote by $\mathcal{E}(X;V)$ the linear span of all real and imaginary parts of the functions 
\begin{equation}\label{phih}
x\rightarrow \varphi^{(h)}(x):=e^{i\iprod{x}{h}_X},
\end{equation}
 where $h\in V$ (we shall omit $h$ from the notation $\varphi^{(h)}$ when it is not necessary). 
\end{dfn}

\begin{oss}
We note that ${\rm Trig}(\R^n):=\mathcal{E}(\R^n;\R^n)$ is the usual space of trigonometric polynomials on $\R^n$.
\end{oss}

We set
\begin{equation}\label{trigonometriciR}
\mathcal{E}_r(X):=\mathcal{E}(X;D(A(r)^\star)).
\end{equation}

\begin{oss}\label{funzioniE}The set of functions $\mathcal{E}_r(X)$ is often used in the autonomous case in which it represents a core for Ornstein-Uhlenbeck type operators in $L^p$ spaces with respect to the invariant measure, see for instance \cite{MR1985790}. However in general there is not a dense subspace  $\displaystyle{\mathfrak{D}\subseteq\bigcap_{r\in\R} D(A(r)^\star)}$, which prevents from using a unique space independent of $r$.
\end{oss}

Let $L(r)$ be the operator defined in \eqref{OU}. If $h\in D(A(r)^\star)$ and $\varphi^{(h)}$ is defined by  \eqref{phih}, then we have
\begin{equation}\label{xiAgeneratore}
L(r)\varphi^{(h)}(x)=\left[i\iprod{x}{A(r)^\star h}_X-\frac{1}{2}\norm{\Qp{r}h}^2_X\right]\varphi^{(h)}(x),\quad x\in X.
\end{equation}

Here we introduce a space of functions that contains $\mathcal{E}_{r}(X)$ and that will be used in the proofs of Section \ref{log-sobolev}. 
\begin{dfn}\label{cilindriche}
Let $k\in\Nat\cup\{0\}$. We denote by $\mathcal{F}_{r}C^k_b(X)$ the space of functions $\ph$ such that there exists $n\in\Nat$, $\psi\in C_b^k(\R^n)$ and $h_1,\ldots h_n\in D(A(r)^\star)$ orthonormal such that 
\begin{equation}\label{tcilindrico}
\ph(x)=\psi\left(\iprod{x}{h_1},\ldots,\iprod{x}{h_n}\right),\qquad x\in X.
\end{equation} 
If $k=0$ we write $\mathcal{F}_{r}C_b(X)$ instead of $\mathcal{F}_{r}C^0_b(X)$.
\end{dfn}

%

Let $\varphi\in\mathcal{F}_{r}C^2_b(X)$ be given by \eqref{tcilindrico}. Since  $h_1,\ldots h_n\in D(A(r)^\star )$ are orthonormal then 
\begin{align}\label{ftgeneratore}
L(r)\ph(x)&=\frac{1}{2}\sum_{i=1}^n \ix{Q(r)\nabla^2\varphi(x)h_i}{h_i}+\sum_{i=1}^n\ix{x}{A(r)^\star h_i}\ix{\nabla\ph(x)}{h_i},\quad x\in X,
\end{align}
where $L(r)$ is the operator defined in \eqref{OU}.
By \eqref{trigonometriciR} and Definitions \ref{trigometrici} and \ref{cilindriche} it follows immediately that
\begin{equation}\label{EFC}
\mathcal{E}_r(X)\subseteq \mathcal{F}_{r}C^k_b(X),\quad \forall\; k\in\Nat.
\end{equation}

\begin{dfn}\label{almostp}
Let $n\in\Nat$ and $\ph\in C_b(\R^n)$. We say that $\ph$ is Bohr almost periodic if for every $\ep>0$ there exists $\rho>0$ such that for all $x_0\in \R^n$ there exists $\tau\in B(x_0,\rho)$ such that 
\begin{equation}
\abs{\ph(x+\tau)-\ph(x)}<\ep,\qquad \forall\;x\in \R^n.
\end{equation}
We denote by $AP_b(\R^n)$ the subspace of $C_b(\R^n)$ of Bohr almost periodic functions from $\R^n$ to $\R$.

\noindent Moreover we denote by $AP^2_b(\R^n)$ the subspace of $C^2_b(\R^n)\cap AP_b(\R^n)$ of the functions $\ph:\R^n\longrightarrow\R$ such that the partial derivatives $\displaystyle{\frac{\de \ph}{\de x_i}}$ and  $\displaystyle{\frac{\de^2 \ph}{\de x_i\de x_j}}$ belongs to $AP_b(\R^n)$, for every $i,j\in\Nat$. 
\end{dfn}

For more details about Bohr almost periodic functions in several variables we refer to \cite{MR1120781,MR4552385,MR4510388}.

\begin{dfn}\label{almostpX}
We denote by $\mathfrak{B}^2_r(X)$ the subspace of $\mathcal{F}_rC_{b}^2(X)$ of the functions $\varphi:X\rightarrow\R$ given by \eqref{tcilindrico} with $\psi\in AP^2_b(\R^n)$, for some $n\in\Nat$. 
\end{dfn}

\begin{prop}\label{approssimazionechit}$ $
Let $h\in\Nat\cup \{0\}$. For every $\varphi\in \mathcal{F}_rC^h_b(X)$ there exist a sequence $\{\varphi_k\}_{k\in\Nat}\subseteq\mathcal{E}_r(X)$ and $C>0$ such that 
\begin{align}\label{dom0}
\norm{\varphi_{k}}_{C^h_b(X)}\leq C\norm{\varphi}_{C^h_b(X)},\qquad k\in\Nat,
\end{align}
and for every $x\in X$ we have
\begin{align}\label{conv0}
\lim_{k\rightarrow +\infty}\Big(\vert \varphi_{k}(x)-\varphi(x)\vert +\sum^h_{j=1}\| \nabla^j\varphi_{k}(x)-\nabla^j\varphi(x)\|_{\mathcal{L}^{j-1}(X)}\Big)=0.
\end{align}
Moreover if $\varphi\in \mathfrak{B}^2_r(X)$ then
\begin{equation}\label{convergenza-grafico}
\lim_{k\rightarrow +\infty}\left(\norm{\varphi_k-\varphi}_{C^2_b(X)}+\sup_{x\in X}\frac{\norm{L(r)\varphi_k-L(r)\varphi}_{X}}{1+\norm{x}_X}\right)=0.
\end{equation}

\end{prop}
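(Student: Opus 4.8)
The plan is to reduce everything to a finite-dimensional approximation statement on $\R^n$, since every $\varphi\in\mathcal{F}_rC^h_b(X)$ is of the form $\varphi(x)=\psi(\iprod{x}{h_1},\ldots,\iprod{x}{h_n})$ with $\psi\in C^h_b(\R^n)$ and $h_1,\ldots,h_n\in D(A(r)^\star)$ orthonormal. First I would establish the following finite-dimensional fact: for $\psi\in C^h_b(\R^n)$ there is a sequence $\{\psi_k\}\subseteq{\rm Trig}(\R^n)=\mathcal{E}(\R^n;\R^n)$ with $\norm{\psi_k}_{C^h_b(\R^n)}\leq C\norm{\psi}_{C^h_b(\R^n)}$ and $\psi_k\to\psi$ together with all derivatives up to order $h$, uniformly on compact sets. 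The standard tool here is mollification combined with periodization: convolve $\psi$ with a smooth compactly supported mollifier to get a $C^\infty$ function $\psi\ast\rho_\delta$ with the same $C^h_b$ bound (up to a universal constant), then restrict attention to a large cube $[-R,R]^n$, periodize, and expand in a (finite) Fourier series; truncating the Fourier series gives a trigonometric polynomial, and Fejér-type (Cesàro) summation guarantees the uniform bound on all derivatives up to order $h$ without blow-up of the $C^h$ norm. Pulling back through $x\mapsto(\iprod{x}{h_1},\ldots,\iprod{x}{h_n})$ turns $\psi_k$ into $\varphi_k\in\mathcal{E}_r(X)$ (since linear combinations of $e^{i\sum_j a_j\iprod{x}{h_j}}=e^{i\iprod{x}{\sum_j a_j h_j}}$ and $\sum_j a_jh_j\in D(A(r)^\star)$), and because the $h_j$ are orthonormal the chain rule gives $\nabla^\ell\varphi_k(x)$ in terms of the derivatives of $\psi_k$ evaluated at the projected point, with norms controlled exactly as in \eqref{ftgeneratore}; this yields \eqref{dom0} and \eqref{conv0}, noting that for fixed $x$ the point $(\iprod{x}{h_1},\ldots,\iprod{x}{h_n})$ lies in a fixed compact set so uniform-on-compacta convergence suffices.

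For the last claim, if $\varphi\in\mathfrak{B}^2_r(X)$ then $\psi\in AP^2_b(\R^n)$, and I would invoke the classical approximation theorem for Bohr almost periodic functions: $\psi$ is a uniform limit of trigonometric polynomials, and moreover (using that $\de\psi/\de x_i$ and $\de^2\psi/\de x_i\de x_j$ are also almost periodic) one can choose the trigonometric polynomials so that they converge together with first and second derivatives uniformly on all of $\R^n$ — the clean way is to apply the Bochner--Fejér summation procedure, whose kernels are nonnegative trigonometric polynomials, so that $\psi_k=K_k\ast\psi$ converges uniformly to $\psi$, $\de\psi_k/\de x_i=K_k\ast(\de\psi/\de x_i)$ converges uniformly to $\de\psi/\de x_i$, and likewise for second derivatives, all with $\norm{\psi_k}_{C^2_b}\leq\norm{\psi}_{C^2_b}$. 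This gives $\norm{\varphi_k-\varphi}_{C^2_b(X)}\to0$ directly. For the weighted term, I would use the explicit formula \eqref{ftgeneratore}:
\begin{align*}
L(r)\varphi_k(x)-L(r)\varphi(x)=\frac12\sum_{i=1}^n\ix{Q(r)(\nabla^2\varphi_k-\nabla^2\varphi)(x)h_i}{h_i}+\sum_{i=1}^n\ix{x}{A(r)^\star h_i}\,\ix{(\nabla\varphi_k-\nabla\varphi)(x)}{h_i}.
\end{align*}
The first sum is bounded by $\tfrac12\,{\rm Tr}(Q(r))\sup_X\norm{\nabla^2\varphi_k-\nabla^2\varphi}$, which tends to $0$ uniformly in $x$; the second sum is bounded by $\big(\sum_i\norm{A(r)^\star h_i}_X\big)\norm{x}_X\sup_X\norm{\nabla\varphi_k-\nabla\varphi}_X$, so after dividing by $1+\norm{x}_X$ it is bounded by the constant $\sum_i\norm{A(r)^\star h_i}_X$ times $\sup_X\norm{\nabla\varphi_k-\nabla\varphi}_X\to0$. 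Combining, the quantity in \eqref{convergenza-grafico} goes to $0$.

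The main obstacle is the finite-dimensional approximation with \emph{simultaneous control of all derivatives up to order $h$ and of the $C^h_b$ norm}: a naive truncated Fourier series on a periodized cube converges in $L^2$ but not necessarily uniformly, and differentiating can amplify norms. The fix — Fejér/Bochner--Fejér (Cesàro) kernels, which are nonnegative and hence give $\norm{K_k\ast\psi}_\infty\leq\norm{\psi}_\infty$ and commute with differentiation — resolves this cleanly, and the almost-periodic case is handled by exactly the same kernels in the Bochner--Fejér form. A minor technical point to be careful about is that mollification is needed first in the merely-$C^h_b$ case to ensure the function (and its derivatives) are genuinely continuous and the periodization does not introduce jumps in the derivatives of order $\leq h$; one handles the resulting boundary mismatch either by the mollify-then-Cesàro scheme on a cube much larger than where $x$ ranges in the compact set of interest, or by a cutoff-and-periodize argument, both of which are routine once the kernel choice is fixed.
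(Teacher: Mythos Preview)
Your approach is essentially the same as the paper's: reduce to a finite-dimensional approximation of $\psi\in C^h_b(\R^n)$ by trigonometric polynomials with controlled $C^h_b$ norm and pointwise convergence of all derivatives, then pull back via the projection $x\mapsto(\iprod{x}{h_1},\ldots,\iprod{x}{h_n})$. The paper simply cites an external lemma (\cite[Lemma 8.1]{DA-LU-TU1}) for the $C^h_b$ case and the density of ${\rm Trig}(\R^n)$ in $AP^2_b(\R^n)$ with respect to the $C^2_b$ norm (\cite[Prop.\,6.1]{MR1120781}) for the Bohr case, whereas you sketch the Fej\'er/Bochner--Fej\'er construction directly; the underlying mechanism is the same.

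One small correction: in your estimate for the second-order part of $L(r)\varphi_k-L(r)\varphi$, you write the bound $\tfrac12\,\Tr{Q(r)}\sup_X\norm{\nabla^2\varphi_k-\nabla^2\varphi}$. But $Q(r)=B(r)B(r)^\star$ is not assumed to be trace class (only $Q(t,s)$ is), so $\Tr{Q(r)}$ may be infinite. This is harmless here because the sum in \eqref{ftgeneratore} is finite: $\nabla^2\varphi(x)$ has range in ${\rm span}\{h_1,\ldots,h_n\}$, so the correct bound is $\tfrac{n}{2}\norm{Q(r)}_{\mathcal{L}(X)}\sup_X\norm{\nabla^2\varphi_k-\nabla^2\varphi}_{\mathcal{L}(X)}$, which still goes to $0$. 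With that fix your argument for \eqref{convergenza-grafico} is complete and matches the paper's (which just says ``combining \eqref{ftgeneratore} and \eqref{step1convergenzagrafico}'').
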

\begin{proof}
Let $h\in\Nat\cup\{0\}$ and let $\varphi\in \mathcal{F}_rC^h_b(X)$. There exist $n\in\Nat$, $\psi\in C^h_b(\R^n)$ and $h_1,\ldots h_n\in D(A(r)^\star )$ orthonormal such that 
\begin{equation}\label{cilC}
\ph(x)=\psi\left(\iprod{x}{h_1},\ldots,\iprod{x}{h_n}\right),\qquad x\in X.
\end{equation}
We define the orthogonal projection $P_n^r$ on ${\rm span}\{h_1,\ldots,h_n\}$, namely
\begin{equation}\label{proR}
P_n^rx:=\sum_{k=1}^n\scal{x}{h_k}h_k.
\end{equation}
We denote by $\mathfrak{I}_n^r:P_n^r(X)\rightarrow\R^n$ the canonical isometry given by
\begin{equation}\label{isoR}
\mathfrak{I}_n^r h_j=e_j,\qquad j\in\{1,\ldots,n\},
\end{equation}
where $e_1,\ldots e_n$ is the canonical basis of $\R^n$. By the definitions of $P_n^r$ and $\mathfrak{I}_n^r$ formula \eqref{cilC} reads as
\[
\ph(x)=\psi\left(\mathfrak{I}_n^r P_n^rx\right),\qquad x\in X.
\]
By \cite[Lemma 8.1]{DA-LU-TU1}, there exists $C>0$ (depending only on $n$) and a sequence of trigonometric polynomials $\{\psi_k\}_{k\in\Nat}\subseteq  {\rm Trig}(\R^n)$ such that
\begin{align*}
&\norm{\psi_{k}}_{C^h_b(\R^n)}\leq C\norm{\varphi}_{C^h_b(\R^n)},\qquad\qquad\qquad\qquad\,\, k\in\Nat,\\
\lim_{k\rightarrow +\infty}&\Big(\vert \psi_{k}(x)-\psi(x)\vert +\sum_{j=1}^h\| \nabla^j\psi_{k}(x)-\nabla^j\psi(x)\|_{\mathcal{L}^{j-1}(\R^n)}\Big)=0,\quad x\in\R^n.
\end{align*}
Setting $\varphi_k(\cdot):=\psi_k(\mathfrak{I}_n^rP^r_n(\cdot))$ for every $k\in\Nat$ and recalling that $\|P_n^r\|_{\mathcal{L}(X)}=\|\mathfrak{I}_n^r\|_{\mathcal{L}(X)}=1$ we obtain \eqref{dom0} and \eqref{conv0}. Now we prove that $\psi_{k}\in {\rm Trig}(\R^n)$ implies $\ph_k\in  \mathcal{E}_r(X)$, for every $k\in\Nat$. Indeed, for every $h\in \R^n$ and setting $$\Phi^{(h)}(\xi)=e^{\iprod{h}{\xi}_{\R^n}},\ \ \xi\in\R^n,$$  we get
\begin{align}\label{prova}
\Phi^{(h)}(\mathfrak{I}_n^rP_n^r x)=e^{i\ix{\mathfrak{I}_n^rP_n^r(x)}{h}}=e^{i\ix{x}{{\mathfrak{I}_n^r}^\star h}},\qquad x\in X.
\end{align}
Since ${\mathfrak{I}_n^r}^\star h\in P_n^r(X)$ and $h_1,\ldots h_n\in D(A(r)^\star)$ then ${\mathfrak{I}_n^r}^\star h\in D(A(r)^\star )$ and so $\ph_k\in \mathcal{E}_r(X)$, for every $k\in\Nat$ (see Definition \ref{trigometrici} and \eqref{trigonometriciR}).

Now we prove \ref{convergenza-grafico}. Let  $\varphi\in \mathfrak{B}^2_r(X)$ given by $$\ph(x)=\psi\left(\ix{x}{h_1},...,\ix{x}{h_n}\right),\ \ x\in X,$$
where $\psi\in AP^2(\R^n)$ and $h_1,...,h_n\in D(A(r)^\star)$. By \cite[Prop.\,6.1]{MR1120781} $AP^2_b(\R^n)$ is the closure in $C^2_b(\R^n)$ of ${\rm Trig}(\R^n)$, then there exists $\{\psi_k\}_{k\in\Nat}\subseteq{\rm Trig}(\R^n)$ such that
\[
\lim_{k\rightarrow +\infty}\norm{\psi_k-\psi}_{C_b^2(\R^n)}=0.
\] 
Hence we get
\begin{equation}\label{step1convergenzagrafico}
\lim_{k\rightarrow +\infty}\norm{\varphi_k-\varphi}_{C_b^2(X)}=0,
\end{equation}
where the functions $\ph_k$ are defined as in the previous approximation procedure.  
Finally combining \eqref{ftgeneratore} and \eqref{step1convergenzagrafico} we obtain \eqref{convergenza-grafico}.
\end{proof}

It is well know that trigonometric polynomials are not dense in $C_b(X)$ even if $X=\R$, however it is possible to prove the following weaker approximation result.

\begin{prop}\label{approssimazione}
Let $h\in\Nat\cup\{0\}$. For every $\varphi\in C^h_b(X)$ there exist a 2-sequence $\{\varphi_{n,m}\}_{n,m\in\Nat}\subseteq \mathcal{E}_{r}(X)$, $\{\varphi_{n}\}_{n\in\Nat}\subseteq \mathcal{F}_{r}C_b^h(X)$ and $\{c_n\}_{n\in\Nat}\subseteq [0,+\infty)$ such that 
\begin{align}
&\norm{\varphi_{n,m}}_{C^h_b(X)}\leq c_n\norm{\varphi_n}_{C^h_b(X)},\qquad\qquad\qquad\qquad\quad\phantom{aaaa} n, m\in\Nat,\label{1c}\\
&\norm{\varphi_{n}}_{C^h_b(X)}\leq\norm{\varphi}_{C^h_b(X)},\qquad\qquad\qquad\qquad\quad\qquad\qquad \ \ n\in\Nat,\label{2c}\\
\lim_{m\rightarrow +\infty}\Big(\vert \varphi_{n,m}(x)&-\varphi_n(x)\vert +\sum_{j=1}^h\| \nabla^j\varphi_{n,m}(x)-\nabla^j\varphi_n(x)\|_{\mathcal{L}^{j-1}(X)}\Big)=0,\quad x\in X, \ n\in\Nat,\label{3c}\\
\lim_{n\rightarrow +\infty}\Big(\vert \varphi_{n}(x)-&\varphi(x)\vert +\sum_{j=1}^h\| \nabla^j\varphi_{n}(x)-\nabla^j\varphi(x)\|_{\mathcal{L}^{j-1}(X)}\Big)=0,\qquad\phantom{aaaaa} x\in X.\label{4c}
\end{align}
\end{prop}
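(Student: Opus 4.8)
The plan is to reduce everything to the one-dimensional (finite-dimensional) approximation theory already used in Proposition~\ref{approssimazionechit}, namely \cite[Lemma 8.1]{DA-LU-TU1}, combined with a cylindrical truncation along a fixed orthonormal basis contained in $D(A(r)^\star)$. The first step is to fix an orthonormal basis $\{g_k\}_{k\in\Nat}$ of $X$ with $g_k\in D(A(r)^\star)$ for every $k$; this is possible because $D(A(r)^\star)$ is dense in $X$ by Hypothesis~\ref{2bis}(i), so one can extract such a basis (e.g.\ by Gram--Schmidt applied to a countable dense subset of $D(A(r)^\star)$). Let $\Pi_n$ denote the orthogonal projection onto $\mathrm{span}\{g_1,\dots,g_n\}$ and define $\varphi_n:=\varphi\circ\Pi_n$, i.e.
\[
\varphi_n(x)=\psi_n\bigl(\iprod{x}{g_1},\dots,\iprod{x}{g_n}\bigr),\qquad \psi_n(\xi):=\varphi\Bigl(\sum_{k=1}^n\xi_k g_k\Bigr),\ \xi\in\R^n.
\]
Since $\varphi\in C^h_b(X)$ and $\|\Pi_n\|_{\op(X)}=1$, the chain rule gives $\psi_n\in C^h_b(\R^n)$ and, for every $j\le h$, $\nabla^j\varphi_n(x)=(\Pi_n)^{\otimes j}\,\nabla^j\varphi(\Pi_n x)$ (in the appropriate multilinear sense), whence $\varphi_n\in\mathcal{F}_rC^h_b(X)$ and $\norm{\varphi_n}_{C^h_b(X)}\le\norm{\varphi}_{C^h_b(X)}$, which is \eqref{2c}. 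The convergence \eqref{4c} then follows from $\Pi_n x\to x$ in $X$ for every $x$, the continuity of $\varphi$ and of each $\nabla^j\varphi$, the uniform bound $\norm{\nabla^j\varphi}_\infty<\infty$, and $(\Pi_n)^{\otimes j}\to\id$ strongly on the relevant tensor arguments; one evaluates each $\nabla^j\varphi_n(x)$ against fixed vectors and passes to the limit. (Only pointwise-in-$x$ convergence is claimed, so no uniformity in $x$ is needed here.)

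For the second (inner) approximation, fix $n$ and apply \cite[Lemma 8.1]{DA-LU-TU1} to $\psi_n\in C^h_b(\R^n)$: there are a constant $c_n>0$ depending only on $n$ and trigonometric polynomials $\{\psi_{n,m}\}_{m\in\Nat}\subseteq\mathrm{Trig}(\R^n)$ with $\norm{\psi_{n,m}}_{C^h_b(\R^n)}\le c_n\norm{\psi_n}_{C^h_b(\R^n)}$ and $\psi_{n,m}\to\psi_n$ together with all derivatives up to order $h$, pointwise on $\R^n$. Set $\varphi_{n,m}(x):=\psi_{n,m}\bigl(\iprod{x}{g_1},\dots,\iprod{x}{g_n}\bigr)$. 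Exactly as in the proof of Proposition~\ref{approssimazionechit} (see formula~\eqref{prova} there), since $g_1,\dots,g_n\in D(A(r)^\star)$ every exponential $e^{i\iprod{x}{\ell}}$ occurring in $\psi_{n,m}$ produces a frequency $\ell\in\mathrm{span}\{g_1,\dots,g_n\}\subseteq D(A(r)^\star)$, hence $\varphi_{n,m}\in\mathcal{E}_r(X)$. The bound \eqref{1c} is immediate from $\norm{\Pi_n}_{\op(X)}=1$ and the bound on $\norm{\psi_{n,m}}_{C^h_b(\R^n)}$ together with $\norm{\psi_n}_{C^h_b(\R^n)}=\norm{\varphi_n}_{C^h_b(X)}$; and \eqref{3c} follows from the derivative convergence of $\psi_{n,m}\to\psi_n$ by the same chain-rule/strong-convergence argument as above (again only pointwise in $x$ is needed).

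I do not expect a genuine obstacle here; the statement is a bookkeeping assembly of two facts, the cylindrical truncation and the Fejér-type approximation of \cite[Lemma 8.1]{DA-LU-TU1}. The one point deserving care is verifying that the frequencies of the approximating trigonometric polynomials lie in $D(A(r)^\star)$ — this is precisely why the basis $\{g_k\}$ must be chosen inside $D(A(r)^\star)$ and why $\mathfrak{I}_n^r{}^\star$ maps into $\mathrm{span}\{h_1,\dots,h_n\}$ in the earlier argument; reusing \eqref{prova} verbatim settles it. A second minor point is the identification of the $C^h_b$ norm of a cylindrical function with the $C^h_b(\R^n)$ norm of its profile, which uses that $\{g_k\}$ is orthonormal so that the isometry $\mathfrak{I}_n^r$ and projection $P_n^r$ both have operator norm $1$; this is already recorded in Proposition~\ref{approssimazionechit}. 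Everything else is routine, and the two limits are kept separate (a genuine $2$-sequence, not a diagonal sequence), so no rate estimates are required.
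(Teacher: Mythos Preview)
Your proof is correct and follows essentially the same route as the paper: choose an orthonormal basis of $X$ inside $D(A(r)^\star)$, define $\varphi_n:=\varphi\circ\Pi_n$ to obtain \eqref{2c} and \eqref{4c}, and then invoke Proposition~\ref{approssimazionechit} (equivalently, \cite[Lemma~8.1]{DA-LU-TU1}) on each cylindrical $\varphi_n$ to produce the inner sequence $\{\varphi_{n,m}\}$ satisfying \eqref{1c} and \eqref{3c}. The paper's proof is slightly terser in that it cites Proposition~\ref{approssimazionechit} directly rather than re-deriving the trigonometric approximation, but the content is identical.
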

\begin{proof}
Since $D(A(r)^\star )$ is dense in $X$ there exists an orthonormal basis $\{e^r_k\}_{k\in\Nat}$ of $X$ such that $e^r_k\in D(A(r)^\star )$, for every $k\in\Nat$. 
Let $n\in\Nat$ and let $P_n^r$ be the orthogonal projection on ${\rm span}\{e^r_1,\ldots e^r_n\}$.  Let $h\in\Nat\cup\{0\}$ and let $\varphi\in C^h_b(X)$. We define
\[
\varphi_{n}(x):=\varphi(P^r_{n}x),\quad x\in X.
\]
Since $\varphi\in C^h_b(X)$ and $\norm{P_n^r}_{\mathcal{L}(X)}=1$, we obtain \eqref{2c} and \eqref{4c}.
Fixed $n\in\Nat$ the function $\varphi_n$ belongs to $\mathcal{F}_rC_b(X)$ so by Proposition \ref{approssimazionechit} there exists a sequence $\{\varphi_{n,m}\}_{m\in\Nat}\subseteq \mathcal{E}_r(X)$ and $c_n>0$ such that \eqref{1c} and \eqref{3c} are verified.
\end{proof}

\begin{oss}\label{densità} Let $\g$ be a Borel probability measure on $X$. By Proposition \ref{approssimazione} $\mathcal{E}_r(X)$ is dense in $L^p(X,\g)$ for all $p\geq 1$.
\end{oss}

\subsection{Differentiation formulas for \texorpdfstring{$\pst$}~~}\label{Hyperframe}

In this subsection we prove formulas \eqref{deri1} and \eqref{deri2}. 
\begin{lem}\label{lemma47}
Assume that Hypothesis \ref{2bis} holds true. Fix $(s,t)\in\Delta$, $h\in X^t$ and let $\ph^{(h)}$ be defined by \eqref{phih}. Then
{\small \begin{align}
&\pst\varphi^{(h)}(x)=e^{-\frac{1}{2}\ix{Q(t,s) h}{h}}\varphi^{(U(t,s)^\star h)}(x).\label{st}
\end{align}
It follows that $\pst(\mathcal{E}_t(X))\subseteq \mathcal{E}_s(X)$. Furthermore for all $x\in X$ we have
\begin{align}
L(s)\pst \ph^{(h)} (x)&=\biggl[i\iprod{x}{A(s)^\star \uts^\star h}_X-\frac{1}{2}\norm{\Qp{s}\uts^\star h}^2_X\biggl] \pst\ph^{(h)}(x), \label{Lfuori}\\
\pst L(t)\ph^{(h)}(x)&=\biggl[i\ix{x}{\uts^\star A(t)^\star h}-\ix{Q(t,s)A(t)^\star h}{h}-\frac{1}{2}\norm{\Qp{t}h}^2_X\biggr]\pst\ph^{(h)}(x).\label{Ldentro}
\end{align}}
\end{lem}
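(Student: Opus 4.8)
The plan is to compute all three quantities by direct substitution, exploiting the explicit Gaussian form of $\pst$ and the eigenfunction-like behaviour of $\ph^{(h)}$ under the operators involved.

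\textbf{Step 1: the action of $\pst$ on $\ph^{(h)}$.} First I would use the definition $\pst\ph^{(h)}(x)=\int_X e^{i\ix{y}{h}}\,\Ng_{U(t,s)x,Q(t,s)}(dy)=\widehat{\Ng}_{U(t,s)x,Q(t,s)}(h)$, and then invoke the formula for the characteristic function of a Gaussian measure recalled in Section~\ref{notations}, namely $\widehat{\Ng}_{m,Q}(h)=e^{i\ix{m}{h}-\frac12\ix{Qh}{h}}$. With $m=U(t,s)x$ this gives $\pst\ph^{(h)}(x)=e^{i\ix{U(t,s)x}{h}-\frac12\ix{Q(t,s)h}{h}}=e^{-\frac12\ix{Q(t,s)h}{h}}e^{i\ix{x}{U(t,s)^\star h}}$, which is exactly \eqref{st}. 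Since $h\in D(A(t)^\star)$ and Hypothesis~\ref{2bis}(ii) gives $U(t,s)^\star D(A(t)^\star)\subseteq D(A(s)^\star)$, the function $\ph^{(U(t,s)^\star h)}$ lies in $\mathcal{E}_s(X)$, so \eqref{st} shows $\pst(\mathcal{E}_t(X))\subseteq\mathcal{E}_s(X)$; by linearity this extends from the generating exponentials to all of $\mathcal{E}_t(X)$.

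\textbf{Step 2: the identity \eqref{Lfuori}.} I would apply the operator $L(s)$ to the right-hand side of \eqref{st}. Up to the constant factor $e^{-\frac12\ix{Q(t,s)h}{h}}$, the function $\pst\ph^{(h)}$ equals $\ph^{(g)}$ with $g=U(t,s)^\star h\in D(A(s)^\star)$, so formula \eqref{xiAgeneratore} with $r=s$ applies directly and yields $L(s)\ph^{(g)}(x)=\bigl[i\ix{x}{A(s)^\star g}-\frac12\norm{Q(s)^{1/2}g}_X^2\bigr]\ph^{(g)}(x)$. Substituting $g=U(t,s)^\star h$ back and reattaching the scalar factor gives precisely \eqref{Lfuori}. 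The only thing to check carefully here is that $A(s)^\star U(t,s)^\star h$ makes sense, which is guaranteed by Hypothesis~\ref{2bis}(ii).

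\textbf{Step 3: the identity \eqref{Ldentro}.} Here I would first compute $L(t)\ph^{(h)}$ using \eqref{xiAgeneratore} with $r=t$: $L(t)\ph^{(h)}(x)=\bigl[i\ix{x}{A(t)^\star h}-\frac12\norm{Q(t)^{1/2}h}_X^2\bigr]\ph^{(h)}(x)$. The term $-\frac12\norm{Q(t)^{1/2}h}_X^2$ is a constant and passes through the linear operator $\pst$ unchanged; the term $i\ix{x}{A(t)^\star h}\ph^{(h)}(x)$ requires computing $\pst\bigl(\ix{\cdot}{k}\ph^{(h)}(\cdot)\bigr)(x)$ for $k=A(t)^\star h$. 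This I would obtain either by differentiating the Gaussian integral $\int_X e^{i\ix{y}{h}}\ix{y}{k}\,\Ng_{U(t,s)x,Q(t,s)}(dy)$ under the integral sign, or equivalently by differentiating \eqref{st} in a real parameter: replacing $h$ by $h+\lambda k$ and taking $\partial_\lambda$ at $\lambda=0$ produces the factor $i\bigl[\ix{x}{U(t,s)^\star k}+i\ix{Q(t,s)h}{k}\bigr]$ — wait, more precisely $\partial_\lambda\bigl(e^{-\frac12\ix{Q(t,s)(h+\lambda k)}{h+\lambda k}}e^{i\ix{x}{U(t,s)^\star(h+\lambda k)}}\bigr)\big|_{\lambda=0}$ equals $\bigl[-\ix{Q(t,s)h}{k}+i\ix{x}{U(t,s)^\star k}\bigr]\pst\ph^{(h)}(x)$, while the left side equals $i\,\pst(\ix{\cdot}{k}\ph^{(h)})(x)$. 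Hence $\pst(\ix{\cdot}{k}\ph^{(h)})(x)=\bigl[\ix{x}{U(t,s)^\star k}+i\ix{Q(t,s)h}{k}\bigr]\pst\ph^{(h)}(x)$. Setting $k=A(t)^\star h$, multiplying by $i$, and combining with the constant term gives $\pst L(t)\ph^{(h)}(x)=\bigl[i\ix{x}{U(t,s)^\star A(t)^\star h}-\ix{Q(t,s)A(t)^\star h}{h}-\frac12\norm{Q(t)^{1/2}h}_X^2\bigr]\pst\ph^{(h)}(x)$, which is \eqref{Ldentro}. I expect the only genuinely delicate point to be the justification of differentiation under the integral sign (or, equivalently, that $\ix{\cdot}{k}\ph^{(h)}$ is $\pst$-integrable and the computation is legitimate), which follows from the Fernique theorem and the finite trace of $Q(t,s)$ as noted in the Remark after the definition of $\pst$; everything else is bookkeeping with the Gaussian characteristic function.
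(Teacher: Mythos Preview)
Your proof is correct and follows essentially the same approach as the paper: both compute \eqref{st} via the Gaussian characteristic function, derive \eqref{Lfuori} by applying \eqref{xiAgeneratore} to $\ph^{(U(t,s)^\star h)}$, and obtain \eqref{Ldentro} by differentiating the characteristic function (the paper phrases this as a directional derivative $\frac{\de}{\de(A(t)^\star h)}$ of $\widehat{\Ng}_{U(t,s)x,Q(t,s)}(h)$, while you parametrize by $h+\lambda k$ and take $\partial_\lambda|_{\lambda=0}$, which is the same computation). The justification of differentiation under the integral sign that you flag is exactly the one point the paper glosses over, and your invocation of Fernique/finite trace is appropriate.
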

\begin{proof}
Let $h\in X^t$ and let $\ph^{(h)}$ be defined by \eqref{phih}. For all $x\in X$ we get
\begin{align}
\pst \ph^{(h)}(x)&=\int_X e^{i\ix{h}{y}}\,\Ng_{U(t,s)x , Q(t,s)}(dy)=\rwhat{\Ng}_{U(t,s)x , Q(t,s)}(h) \nonumber\\
&=e^{i\ix{h}{U(t,s)x }}e^{-\frac{1}{2}\ix{Q(t,s) h}{h}}=e^{-\frac{1}{2}\ix{Q(t,s) h}{h}}\varphi^{(U(t,s)^\star h)}(x), \nonumber
\end{align}
so that \eqref{st} holds and $\pst$ maps $\mathcal{E}_t(X)$ into $\mathcal{E}_s(X)$ for every $(s,t)\in\Delta$.

Recalling that by Hypothesis \ref{2bis} $\uts^\star h\in X^s$, we have $\pst(\mathcal{E}_t(X))\subseteq \mathcal{E}_s(X)$. Let us prove \eqref{Lfuori}. By \eqref{xiAgeneratore} for all $x\in X$  we have
\begin{align}
L(s)\pst \ph^{(h)} (x)&= L(s)\left(e^{-\frac{1}{2}\ix{Q(t,s)h}{h}}\ph^{(\uts^\star h)}\right)(x)\nonumber\\
&=e^{-\frac{1}{2}\ix{Q(t,s)h}{h}}\biggl[i\iprod{x}{A(s)^\star \uts^\star h}_X-\frac{1}{2}\norm{\Qp{s}\uts^\star h}^2_X\biggl]\varphi^{(\uts^\star h)}(x)\nonumber \\
&=\biggl[i\iprod{x}{A(s)^\star \uts^\star h}_X-\frac{1}{2}\norm{\Qp{s}\uts^\star h}^2_X\biggl] \pst\ph^{(h)}(x)\nonumber
\end{align}
and \eqref{Lfuori} holds. To prove \eqref{Ldentro}, we combine \eqref{xiAgeneratore} and \eqref{st} and for all $x\in X$ we get
{\small\begin{align}
\pst L(t)\ph^{(h)}(x)&=\pst\biggl(\left[i\iprod{\cdot}{A(t)^\star h}_X-\frac{1}{2}\norm{\Qp{t}h}^2_X\right]\varphi^{(h)}(\cdot)\biggr)(x)\nonumber \\
&=\int_X\left(i\ix{U(t,s)x +y}{A(t)^\star h}\varphi^{(h)}\left(U(t,s)x +y\right)\right)\,\Ng_{0,Q(t,s)}(dy)-\frac{1}{2}\norm{\Qp{t}h}^2_X\pst\varphi^{(h)}(x)\nonumber \\
&=: I_1-\frac{1}{2}\norm{\Qp{t}h}^2_X\pst\varphi^{(h)}(x).\nonumber
\end{align}}
We note that 
\begin{align}
I_1&=\int_X\left(i\ix{U(t,s)x +y}{A(t)^\star h}\varphi_h\left(U(t,s)x +y\right)\right)\,\Ng_{0,Q(t,s)}(dy)\nonumber\\
&=\int_X\frac{\de}{\de\left(A(t)^\star h\right)}e^{i\ix{U(t,s)x +y}{h}}\,\Ng_{0,Q(t,s)}(dy)=\frac{\de}{\de\left(A(t)^\star h\right)}\int_Xe^{i\ix{y}{h}}\,\Ng_{U(t,s)x ,Q(t,s)}(dy)\nonumber \\
&=\frac{\de}{\de\left(A(t)^\star h\right)}\widehat{\Ng}_{U(t,s)x ,Q(t,s)}(h)=\frac{\de}{\de(A(t)^\star h)}\left(e^{-\frac{1}{2}\ix{Q(t,s)h}{h}}e^{i\ix{x}{\uts^\star h}}\right)\nonumber\\
&=\left[i\ix{x}{\uts^\star A(t)^\star h}-\ix{Q(t,s)A(t)^\star h}{h}\right]e^{-\frac{1}{2}\ix{Q(t,s)h}{h}}e^{i\ix{x}{\uts^\star h}}\nonumber\\
&=\left[i\ix{x}{\uts^\star A(t)^\star h}-\ix{Q(t,s)A(t)^\star h}{h}\right]\pst\ph^{(h)}(x).\nonumber
\end{align}
Summing up, \eqref{Ldentro} follows.
\end{proof}

\begin{lem}
Assume that Hypothesis \ref{2bis} holds true. For each $s_0,t_0\in\R$, we have
{\small\begin{align}\label{derivataqts}
&\left(\frac{d}{dt}\ix{Q(t,s)h}{h}\right)\at{t=t_0}=\norm{\Qp{t_0}h}_X^{\frac{1}{2}}+2\ix{Q(t_0,s)A(t_0)^\star h}{h},\ \ h\in D(A(t_0)^\star),\ s\leq t, t_0,\\
&\left(\frac{d}{ds}\ix{Q(t,s)x}{x}\right)\at{s=s_0}=-\ix{U(t,s_0) Q(s_0)U(t,s_0)^\star x}{x},\ \ x\in X,\ t\geq s, s_0. \label{derivataqts2}
\end{align}}
\end{lem}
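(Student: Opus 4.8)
The plan is to differentiate the scalar representation that follows directly from \eqref{qtscov}: for $w\in X$ and $s<\tau$, using $Q(r)=B(r)B(r)^\star$ and the self‑adjointness of $U(\tau,r)Q(r)U(\tau,r)^\star$,
\begin{equation*}
\ix{Q(\tau,s)w}{w}=\int_s^\tau\norm{\Qp{r}U(\tau,r)^\star w}_X^2\,dr .
\end{equation*}
Throughout I will use that $r\mapsto\norm{\Qp{r}U(\tau,r)^\star w}_X^2$ is continuous and that $\norm{Q(\cdot,s)}_{\op(X)}$ is bounded on compact $t$‑intervals; both follow from Hypothesis \ref{1} (strong continuity and the exponential bound \eqref{omega} for $U$, strong continuity of $B(\cdot)$, and the trace‑class assumption, which forces $U(\tau,r)B(r)$ to be Hilbert--Schmidt and continuous in $r$).

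Formula \eqref{derivataqts2} is then immediate, since the integrand above does not depend on $s$: the fundamental theorem of calculus applied to the lower endpoint gives, for $x\in X$ and $s_0<t$,
\begin{equation*}
\left(\frac{d}{ds}\ix{Q(t,s)x}{x}\right)\at{s=s_0}=-\norm{\Qp{s_0}U(t,s_0)^\star x}_X^2=-\ix{U(t,s_0)Q(s_0)U(t,s_0)^\star x}{x}.
\end{equation*}

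For \eqref{derivataqts} I would \emph{not} differentiate under the integral sign, since that needs $t\mapsto U(t,r)^\star h$ to be differentiable for all $t$ near $t_0$ (hence $h\in D(A(t)^\star)$ for those $t$), while we only assume $h\in D(A(t_0)^\star)$. Instead I localize the $t$‑dependence: splitting the integral at $t_0$ and using $U(t,r)=U(t,t_0)U(t_0,r)$ for $r\le t_0\le t$ yields, for $\ep>0$,
\begin{equation*}
\ix{Q(t_0+\ep,s)h}{h}=\ix{Q(t_0,s)U(t_0+\ep,t_0)^\star h}{U(t_0+\ep,t_0)^\star h}+\ix{Q(t_0+\ep,t_0)h}{h}.
\end{equation*}
Dividing $\ix{Q(t_0+\ep,s)h}{h}-\ix{Q(t_0,s)h}{h}$ by $\ep$ and writing $u_\ep:=U(t_0+\ep,t_0)^\star h$, the first term on the right contributes $\ix{Q(t_0,s)\frac{u_\ep-h}{\ep}}{u_\ep}+\ix{Q(t_0,s)h}{\frac{u_\ep-h}{\ep}}$; since $u_\ep\to h$ and $\frac{u_\ep-h}{\ep}\to A(t_0)^\star h$ by \eqref{deri2A*} evaluated at $t=t_0$, this tends to $\ix{Q(t_0,s)A(t_0)^\star h}{h}+\ix{Q(t_0,s)h}{A(t_0)^\star h}=2\ix{Q(t_0,s)A(t_0)^\star h}{h}$, using the self‑adjointness of $Q(t_0,s)$. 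The second term contributes $\ep^{-1}\int_{t_0}^{t_0+\ep}\norm{\Qp{r}U(t_0+\ep,r)^\star h}_X^2\,dr$, which by the mean value theorem and the continuity of $(\tau,r)\mapsto\norm{\Qp{r}U(\tau,r)^\star h}_X^2$ at $(t_0,t_0)$ tends to $\norm{\Qp{t_0}h}_X^2$. So the right derivative at $t_0$ is $2\ix{Q(t_0,s)A(t_0)^\star h}{h}+\norm{\Qp{t_0}h}_X^2$. The left derivative is computed by the symmetric splitting $\ix{Q(t_0,s)h}{h}=\ix{Q(t_0-\ep,s)U(t_0,t_0-\ep)^\star h}{U(t_0,t_0-\ep)^\star h}+\ix{Q(t_0,t_0-\ep)h}{h}$ for $\ep>0$, invoking \eqref{deri1A*} in place of \eqref{deri2A*} to get $\ep^{-1}(U(t_0,t_0-\ep)^\star h-h)\to A(t_0)^\star h$; it gives the same value, so \eqref{derivataqts} follows (with the exponent on $\norm{\Qp{t_0}h}_X$ there understood as $2$).

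I expect the main obstacle to be the limit in the first term of the \emph{left}‑hand splitting — for the right splitting $Q(t_0,s)$ is fixed, so there is no issue — where one needs $\ep\mapsto\ix{Q(t_0-\ep,s)\xi}{\eta}$ continuous at $0$ for $\xi,\eta\in X$. Via dominated convergence and the locally uniform operator‑norm bound on $Q(\cdot,s)$, this reduces to the continuity in $r$ of $r\mapsto\ix{U(\tau,r)Q(r)U(\tau,r)^\star\xi}{\eta}$ already used above, after which the rest is routine bookkeeping.
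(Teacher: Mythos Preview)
Your proof is correct and takes a genuinely different route from the paper's. The paper writes out the difference quotient for $\ix{Q(t_0+\ep,s)h}{h}-\ix{Q(t_0,s)h}{h}$ directly from the integral formula, splitting it into a piece $J_\ep$ over $[t_0,t_0+\ep]$ (handled by the mean value theorem, as you do) and a piece $I_\ep$ over $[s,t_0]$ that records the change of integrand; the latter is then treated by dominated convergence, using the pointwise derivative \eqref{deri2A*} at each $r\in[s,t_0)$. You instead exploit the operator identity $Q(t_0+\ep,s)=U(t_0+\ep,t_0)Q(t_0,s)U(t_0+\ep,t_0)^\star+Q(t_0+\ep,t_0)$, which packages the whole $[s,t_0]$ contribution as the fixed operator $Q(t_0,s)$ sandwiched by $U(t_0+\ep,t_0)^\star$; the limit then reduces to differentiating the finite-dimensional quadratic form $\ep\mapsto\ix{Q(t_0,s)u_\ep}{u_\ep}$, and no integral-level dominated convergence is needed. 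Your decomposition also buys you an explicit treatment of the left derivative (the paper only writes out $\ep>0$), at the price --- which you correctly flag --- of having $Q(t_0-\ep,s)$ vary with $\ep$ in that case. The one place your argument is formally a bit more exposed is the appeal to \eqref{deri2A*} and \eqref{deri1A*} at the diagonal $t=s$, whereas the paper's pointwise limits live strictly inside $\Delta$; but the paper's (unwritten) domination bound for $I_\ep/\ep$ ultimately rests on the same factorization $U(t_0+\ep,r)^\star=U(t_0,r)^\star U(t_0+\ep,t_0)^\star$ and hence on the same boundary behaviour, so this is not an additional hypothesis on your side.
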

\begin{proof}
Let $h\in D(A(t_0)^\star)$ and $\ep>0$. Then for $s\leq t_0$ we have
\begin{align}
&\frac{\ix{Q(t_0+\ep,s)h}{h}-\ix{Q(t_0,s)h}{h}}{\ep}\nonumber \\
&=\frac{1}{\ep}\left(\int_s^{t_0+\ep}\ix{U(t_0+\ep,r)Q(r)U(t_0+\ep,r)^\star h}{h}\,dr-\int_s^{t_0}\ix{U(t_0,r)Q(r)U(t_0,r)^\star h}{h}\,dr\right)\nonumber \\
&=\frac{1}{\ep}\biggl[\int_s^{t_0}\bigl(\ix{U(t_0+\ep,r)Q(r)U(t_0+\ep,r)^\star h}{h}-\ix{U(t_0,r)Q(r)U(t_0,r)^\star h}{h}\bigr)\,dr\nonumber \\
&+\int_{t_0}^{t_0+\ep}\ix{U(t_0+\ep,r)Q(r)U(t_0+\ep,r)^\star h}{h}\,dr\biggr]=:\frac{I_\ep+J_\ep}{\ep}.\nonumber
\end{align}
Since for every $h\in D(A(t)^\star)$ the mapping $$r\in[t_0,t_0+\ep]\longmapsto \ix{U(t_0+\ep,r)Q(r)U(t_0+\ep,r)^\star h}{h}\in\R$$ is continuous, by the Mean Value Theorem for integrals there exists $r_\ep\in(t_0,t_0+\ep)$ such that
\begin{equation}
\frac{J_\ep}{\ep}=\ix{U(t_0+\ep ,r_\ep)Q(r)U(t_0+\ep,r_\ep)^\star h}{h}. \nonumber
\end{equation}
Since for every $h\in D(A(t)^\star)$ and $s\leq t_0$, the mapping $$(\ep, r)\in[s-t_0,+\infty)\times[t_0,t_0+\ep]\longmapsto \ix{U(t_0+\ep,r)Q(r)U(t_0+\ep,r)^\star h}{h}\in\R$$ is continuous, we have
\begin{equation}
\lim_{\ep\rightarrow 0^+}\frac{J_\ep}{\ep}=\lim_{\ep\rightarrow 0^+}\left(\ix{U(t_0+\ep ,r_\ep)Q(r)U(t_0+\ep,r_\ep)^\star h}{h}\right)=\norm{\Qp{t}h}_X^2. \nonumber
\end{equation}
Let us consider $I_\ep$. We have
\begin{align}
\frac{I_\ep}{\ep}&=\frac{1}{\ep}\biggl(\int_s^t\ix{\left(U(t+\ep,r)Q(r)U^\star(t+\ep,r)-U(t+\ep,r)Q(r)U^\star(t,r)\right)h}{h}\,dr\nonumber\\
&+\int_s^t\ix{\left(U(t+\ep,r)Q(r)U^\star(t,r)-U(t,r)Q(r)U^\star(t,r)\right)h}{h}\,dr\biggr)\nonumber 
\end{align}
By the Dominated convergence Theorem  we get
\begin{equation}
\lim_{\ep\rightarrow 0^+}\frac{I_\ep}{\ep}=2\ix{Q(t,s)A(t)^\star h}{h},
\end{equation}
and \eqref{derivataqts} follows. \eqref{derivataqts2}is an immediate consequence of the Fundamental Theorem of  Calculus.
\end{proof}

\begin{thm}
Assume that Hypothesis \ref{2bis} holds true. For every $(s,t)\in\Delta$ and $\ph\in\mathfrak{B}^2_t(X)$ we have
\begin{align}
\label{st-bohr}&\pst\varphi\in\mathfrak{B}^2_s(X),\\
&\frac{\de}{\de s} \pst \ph (x)=-L(s)\pst \ph (x),\quad x\in X,\label{derivata-sx}\\
&\frac{\de}{\de t} \pst \ph (x)=\pst L(t) \ph (x),\quad x\in X.\label{derivata-dx}
\end{align}
\end{thm}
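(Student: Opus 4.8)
The plan is to first prove everything on the dense subspace $\mathcal{E}_t(X)$ of exponential functions $\ph^{(h)}$ with $h\in D(A(t)^\star)$, where every quantity can be computed explicitly, and then transfer the identities to the larger space $\mathfrak{B}^2_t(X)$ by the graph-norm approximation \eqref{convergenza-grafico} of Proposition \ref{approssimazionechit}. For the first part I would fix $(s,t)\in\Delta$ and $h\in D(A(t)^\star)$, so that $U(t,s)^\star h\in D(A(s)^\star)$ by Hypothesis \ref{2bis}(ii), and use the formula
\[
\pst\ph^{(h)}(x)=e^{-\frac{1}{2}\ix{Q(t,s)h}{h}}\ph^{(U(t,s)^\star h)}(x)
\]
from \eqref{st}. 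Differentiating this in $s$ and in $t$ is a one-line computation once one has: (a) the derivatives \eqref{deri1A*}--\eqref{deri2A*} of $s\mapsto U(t,s)^\star h$ and $t\mapsto U(t,s)^\star h$; (b) the derivatives of $s\mapsto\ix{Q(t,s)h}{h}$ and $t\mapsto\ix{Q(t,s)h}{h}$, which are exactly \eqref{derivataqts}--\eqref{derivataqts2} from the preceding lemma; and (c) the chain rule for the map $x\mapsto\ph^{(k)}(x)=e^{i\ix{x}{k}}$ in the parameter $k$. Comparing the result with the explicit expressions \eqref{Lfuori} and \eqref{Ldentro} for $L(s)\pst\ph^{(h)}$ and $\pst L(t)\ph^{(h)}$ from Lemma \ref{lemma47}, the identities \eqref{derivata-sx} and \eqref{derivata-dx} follow for $\ph\in\mathcal{E}_t(X)$; and \eqref{st-bohr} is immediate for such $\ph$ since $\pst\ph^{(h)}$ is again (a multiple of) an element of $\mathcal{E}_s(X)\subseteq\mathfrak{B}^2_s(X)$.

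Next I would extend to $\ph\in\mathfrak{B}^2_t(X)$ given by $\ph(x)=\psi(\ix{x}{h_1},\dots,\ix{x}{h_n})$ with $\psi\in AP^2_b(\R^n)$. Apply Proposition \ref{approssimazionechit} to get $\{\ph_k\}\subseteq\mathcal{E}_t(X)$ with $\ph_k\to\ph$ in $C^2_b(X)$ and $L(t)\ph_k\to L(t)\ph$ uniformly after dividing by $1+\norm{x}_X$; note also $\{\ph_k\}$ is bounded in $C^2_b(X)$. Since $\pst$ is an integral against a Gaussian with all moments finite, $\pst\ph_k(x)\to\pst\ph(x)$ for each $x$, and similarly $\pst(L(t)\ph_k)(x)\to\pst(L(t)\ph)(x)$ using the $(1+\norm{y}_X)$-weighted convergence together with the finiteness of $\int_X\norm{y}_X\,\Ng_{U(t,s)x,Q(t,s)}(dy)$. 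By \eqref{Lfuori}, $L(s)\pst\ph^{(h)}(x)$ depends continuously (with the right growth) on the $C^2_b$-data, so $L(s)\pst\ph_k(x)\to L(s)\pst\ph(x)$ as well. Then \eqref{derivata-sx} and \eqref{derivata-dx} for $\ph$ follow by passing to the limit in the corresponding identities for $\ph_k$, provided the convergence of the $s$- and $t$-derivatives is locally uniform in the variable being differentiated — which is why one needs the uniform (in the relevant parameter) bounds, not just pointwise convergence. For \eqref{st-bohr}, writing $\ph_k(x)=\psi_k(\mathfrak{I}_n^tP_n^t x)$ one checks $\pst\ph_k$ has the form $\tilde\psi_k(\ix{x}{g_1},\dots)$ with $g_j\in D(A(s)^\star)$ and $\tilde\psi_k\in{\rm Trig}$, so $\pst\ph\in\mathfrak{B}^2_s(X)$ is obtained as a $C^2_b$-limit of such functions, using that $AP^2_b(\R^m)$ is $C^2_b$-closed.

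The main obstacle I expect is the last interchange-of-limits step: upgrading the pointwise convergence $\pst\ph_k\to\pst\ph$ to convergence strong enough that the derivatives in $s$ (resp. $t$) also converge, so that one may legitimately conclude $\frac{\de}{\de s}\pst\ph=\lim_k\frac{\de}{\de s}\pst\ph_k$. The clean way is to fix $(s,t)\in\Delta$ and $x\in X$, choose a compact neighbourhood $[s_0,s_1]\ni s$ with $s_1<t$, and verify that $\sup_{\sigma\in[s_0,s_1]}\bigl|L(\sigma)P_{\sigma,t}\ph_k(x)-L(\sigma)P_{\sigma,t}\ph(x)\bigr|\to 0$; this reduces, via \eqref{Lfuori}-type bounds and uniform estimates on $\norm{U(t,\sigma)^\star}_{\op(X)}$, $\Tr Q(t,\sigma)$, $\norm{Q(\sigma)^{1/2}}$ over the compact interval, to the $C^2_b$ and weighted bounds already furnished by Proposition \ref{approssimazionechit}. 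Once that uniform-in-$\sigma$ convergence of the right-hand side is in hand, a standard "uniformly convergent derivatives" argument gives the differentiability of the limit and the identity; the $t$-derivative is handled symmetrically using \eqref{deri2} and the continuity of $(s,t)\mapsto\pst\ph(x)$ coming from strong continuity of $U$ and of $Q(t,s)$.
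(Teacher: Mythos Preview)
Your proposal is correct and follows exactly the paper's strategy: establish \eqref{derivata-sx}--\eqref{derivata-dx} first for $\ph^{(h)}\in\mathcal{E}_t(X)$ by combining Lemma~\ref{lemma47} with the derivatives \eqref{derivataqts}--\eqref{derivataqts2} and \eqref{deri1A*}--\eqref{deri2A*}, then pass to $\mathfrak{B}^2_t(X)$ via the approximation \eqref{convergenza-grafico} of Proposition~\ref{approssimazionechit}, and deduce \eqref{st-bohr} from $\pst(\mathcal{E}_t(X))\subseteq\mathcal{E}_s(X)$ together with the $C^2_b$-closedness of $AP^2_b$. Your treatment of the interchange-of-limits step is in fact more careful than the paper's, which simply invokes \eqref{convergenza-grafico} without further justification.
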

\begin{proof}  
Let $\ph\in\mathfrak{B}^2_t(X)$. Then
\[
\varphi(x):=\psi(\mathfrak{I}_n^tP_n^tx),\quad x\in X,
\]
where $n\in\Nat$, $\psi\in AP^2(\R^n)$ and the operators $\mathfrak{I}_n^r$ and $P_n^r$ are defined in \eqref{proR} and \eqref{isoR}, respectively. Let $\{\ph_k\}_{k\in\Nat}\subseteq\mathcal{E}_t(X)$ be the approximation built in the proof of Proposition \ref{approssimazionechit}, namely
\[
\ph_k(x):=\psi_{k}(\mathfrak{I}_n^tP_n^tx), \quad x\in X,
\]
where $\psi_{k}\in {\rm Trig}(\R^n)$.
We recall that since $\psi_{k}\in {\rm Trig}(\R^n)$ then $\ph_k\in \mathcal{E}_t(X)$, for every $k\in\Nat$ (see \eqref{prova}). Let $h\in \R^n$ we set $$\Phi^{(h)}(\xi)=e^{\iprod{h}{\xi}_{\R^n}},\ \ \xi\in\R^n.$$   So by \eqref{prova} and \eqref{st} we have
\begin{align*}
\pst\Phi^{(h)}(\mathfrak{I}_n^tP_n^r x)=e^{-\frac{1}{2}\ix{Q(t,s) h}{h}}e^{i\ix{x}{U(t,s)^\star {\mathfrak{I}_n^t}^\star  h}},\qquad x\in X.
\end{align*}
Recalling that ${\mathfrak{I}_n^t}^\star  h\in D(A(t)^\star )$ and that $U(t,s)^\star (D(A(t)^\star ))\subseteq D(A(s)^\star )$, for every $k\in\Nat$ we deduce 
\begin{equation*}
\pst\ph_k\in\mathcal{E}_s(X),
\end{equation*}
so by the first part of this proof for every $k\in\Nat$ we have
\begin{align}
&\frac{\de}{\de s} \pst \ph_k(x)=-L(s)\pst \ph_k (x),\quad x\in X,\label{derivata-sxn}\\
&\frac{\de}{\de t} \pst \ph_k(x)=\pst L(t) \ph_k (x),\quad x\in X.\label{derivata-dxn}
\end{align}
So by \eqref{convergenza-grafico}  letting $k\rightarrow +\infty$ in \eqref{derivata-sxn} and \eqref{derivata-dxn} we obtain \eqref{derivata-sx} and \eqref{derivata-dx}, respectively. 
\end{proof}

\section{Invariant measures}\label{misureinvarianti}

In this section we will investigate existence and uniqueness of an evolution system of measures for $\{\pst\}_{(s,t)\in \overline{\Delta}}$ in $\R$. 

\begin{dfn}
An evolution system of measures for $\{\pst\}_{(s,t)\in \overline{\Delta}}$ in $\R$ is a family of Borel probability measures $\{\nu_r\}_{r\in \R}$ such that 
\begin{equation}\label{invarianzadef}
\int_XP_{s,t}\ph(x)\nu_s(dx)=\int_X\ph(x)\nu_t(dx),\ \ s\leq t,\ \ph\in C_b(X).
\end{equation}
\end{dfn}

First of all we prove the following useful characterization of evolution system of measures for $\{\pst\}_{(s,t)\in \overline{\Delta}}$ in $\R$.
\begin{prop}\label{prop5.2}
Assume that Hypothesis \ref{1} holds true. Then $\{\nu_r\}_{r\in\R}$ is an evolution system of measures for $\{\pst\}_{(s,t)\in \overline{\Delta}}$ in $\R$ if and only if 
\begin{equation}\label{invarianza}
\rwhat{\nu_t}(h)=e^{-\frac{1}{2}\ix{Q(t,s) h}{h}}\rwhat{\nu_s}(\uts^\star h),\ \  s\leq t.
\end{equation}
\end{prop}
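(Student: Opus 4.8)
The plan is to exploit the fact that a Borel probability measure on a separable Hilbert space is uniquely determined by its characteristic function, together with the explicit action of $\pst$ on the exponential functions $\ph^{(h)}$ computed in Lemma \ref{lemma47}. The key identity to lean on is \eqref{st}, which gives $\pst\ph^{(h)}(x)=e^{-\frac{1}{2}\ix{Q(t,s)h}{h}}\ph^{(\uts^\star h)}(x)$ for every $h\in X$ (note that the computation of $\pst\ph^{(h)}$ in the proof of Lemma \ref{lemma47} via the characteristic function of the Gaussian $\Ng_{\uts x,Q(t,s)}$ does not actually require $h\in X^t$; only the conclusion $\pst\ph^{(h)}\in\mathcal{E}_s(X)$ does, which is irrelevant here). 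This reduces the functional identity \eqref{invarianzadef} to a pointwise identity about characteristic functions.

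First I would prove the ``only if'' direction. Assume $\{\nu_r\}_{r\in\R}$ is an evolution system of measures, fix $s\leq t$ and $h\in X$, and apply \eqref{invarianzadef} with $\ph=\ph^{(h)}$ — more precisely with its real and imaginary parts, which are bounded continuous, so that by linearity \eqref{invarianzadef} holds for the complex-valued $\ph^{(h)}$ as well. Then
\[
\rwhat{\nu_t}(h)=\int_X\ph^{(h)}(x)\,\nu_t(dx)=\int_X\pst\ph^{(h)}(x)\,\nu_s(dx)=e^{-\frac{1}{2}\ix{Q(t,s)h}{h}}\int_X\ph^{(\uts^\star h)}(x)\,\nu_s(dx)=e^{-\frac{1}{2}\ix{Q(t,s)h}{h}}\rwhat{\nu_s}(\uts^\star h),
\]
using \eqref{st} in the third equality. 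This is exactly \eqref{invarianza}.

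For the ``if'' direction, assume \eqref{invarianza} holds. The natural route is to show that for every $s\leq t$ the two Borel probability measures $\mu:=\nu_t$ and the pushforward/image measure $\lambda$ defined by $\lambda(B):=\int_X (\pst \1{B})(x)\,\nu_s(dx)$ — equivalently the measure with $\int_X\ph\,d\lambda=\int_X\pst\ph\,d\nu_s$ for $\ph\in C_b(X)$, which is indeed a Borel probability measure since $\pst$ preserves positivity, constants, and by dominated convergence is continuous along bounded pointwise-convergent sequences — have the same characteristic function. Indeed $\rwhat{\lambda}(h)=\int_X\pst\ph^{(h)}\,d\nu_s=e^{-\frac{1}{2}\ix{Q(t,s)h}{h}}\rwhat{\nu_s}(\uts^\star h)=\rwhat{\nu_t}(h)$ by \eqref{st} and \eqref{invarianza}. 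Since characteristic functions separate Borel probability measures on a separable Hilbert space, $\lambda=\nu_t$, which is \eqref{invarianzadef}. The main obstacle, and the only genuinely delicate point, is the justification that $\lambda$ is a well-defined Borel probability measure — i.e. that $\ph\mapsto\int_X\pst\ph\,d\nu_s$ is a positive normalized functional on $C_b(X)$ arising from a measure; this follows from Fubini (writing $\int_X\pst\ph\,d\nu_s=\int_X\int_X\ph(y)\,\Ng_{\uts x,Q(t,s)}(dy)\,\nu_s(dx)$ and reading off $\lambda$ as the law of $\uts\xi+\eta$ with $\xi\sim\nu_s$, $\eta\sim\Ng_{0,Q(t,s)}$ independent), after which the characteristic-function identity is immediate. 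Alternatively one can argue directly on the dense class: by Remark \ref{densità} (or rather by a density/approximation argument with the exponentials $\mathcal{E}_r(X)$, together with the fact that both sides of \eqref{invarianzadef} are bounded linear functionals of $\ph$ continuous under bounded pointwise convergence), the identity \eqref{invarianzadef} for all $\ph^{(h)}$, $h\in X$, extends to all $\ph\in C_b(X)$.
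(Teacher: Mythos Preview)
Your proof is correct. The ``only if'' direction matches the paper's (the paper redoes the computation of $\pst\ph^{(h)}$ from the characteristic function of $\Ng_{\uts x,Q(t,s)}$ rather than citing Lemma~\ref{lemma47}, which is stated under the stronger Hypothesis~\ref{2bis}; your parenthetical remark that formula~\eqref{st} only needs Hypothesis~\ref{1} is correct and resolves this). For the ``if'' direction your primary route differs from the paper's: you recognize $\ph\mapsto\int_X\pst\ph\,d\nu_s$ as integration against the law $\lambda$ of $\uts\xi+\eta$ (with $\xi\sim\nu_s$ and $\eta\sim\Ng_{0,Q(t,s)}$ independent) and conclude $\lambda=\nu_t$ from equality of characteristic functions. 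The paper instead checks~\eqref{invarianzadef} directly for the exponentials $\ph^{(h)}$ and then passes to general $\ph\in C_b(X)$ via the two-index approximation of Proposition~\ref{approssimazione} with two applications of dominated convergence --- essentially the ``alternative'' you sketch at the end. Your convolution argument is shorter and bypasses the approximation machinery, at the cost of invoking the (standard) uniqueness theorem for characteristic functions on a separable Hilbert space; the paper's approach keeps everything at the level of the approximation results already developed in Section~\ref{exp}.
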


\begin{proof}
Let $h\in X$ and let $s\leq t$. If $\{\nu_r\}_{r\in\R}$ is an evolution system of measures for $\{\pst\}_{(s,t)\in \overline{\Delta}}$ in $\R$, then we have
\begin{align*}
\rwhat{\nu}_t(h)&=\int_X e^{i\ix{h}{x}}\,\nu_t(dx)=\int_X \pst (e^{i\ix{h}{\cdot}})(x)\,\nu_s(dx)\\
&=\int_X\int_X e^{i\ix{h}{y}}\,\Ng_{U(t,s)x , Q(t,s)}(dy)\nu_s(dx)=\int_X\rwhat{\Ng}_{U(t,s)x , Q(t,s)}(h)\nu_s(dx)\\
&=\int_X e^{i\ix{h}{U(t,s)x }}e^{-\frac{1}{2}\ix{Q(t,s) h}{h}}\nu_s(dx)=e^{-\frac{1}{2}\ix{Q(t,s) h}{h}}\int_Xe^{i\ix{h}{U(t,s)x}}\,\nu_s(dx)\\
&=e^{-\frac{1}{2}\ix{Q(t,s) h}{h}}\rwhat{\nu}_s(\uts^\star h),
\end{align*}
and \eqref{invarianza} holds.

Conversely, we assume that \eqref{invarianza} holds true. Given $h\in X$, we first show that \eqref{invarianzadef} holds for $\ph(x)=e^{i\ix{h}{x}}$, $x\in X$.  If $s\leq t$, we have by definition
\begin{equation}\label{inv1}
\rwhat{\nu}_t(h)=\int_X e^{i\ix{h}{x}}\,\nu_t(dx),
\end{equation}
and by \eqref{invarianza} we have
\begin{align}\label{inv2}
\rwhat{\nu}_t(h)&= e^{-\frac{1}{2}\ix{Q(t,s) h}{h}}\rwhat{\nu}_s(\uts^\star h)=e^{-\frac{1}{2}\ix{Q(t,s) h}{h}}\int_Xe^{i\ix{h}{U(t,s)x}}\,\nu_s(dx)\nonumber\\
&=\int_X e^{i\ix{h}{U(t,s)x }}e^{-\frac{1}{2}\ix{Q(t,s) h}{h}}\nu_s(dx)\nonumber\\
&=\int_X\int_X e^{i\ix{h}{y}}\,\Ng_{U(t,s)x , Q(t,s)}(dy)\nu_s(dx)=\int_X\rwhat{\Ng}_{U(t,s)x , Q(t,s)}(h)\nu_s(dx)\nonumber\\
&=\int_X \pst (e^{i\ix{h}{\cdot}})(x)\,\nu_s(dx).
\end{align}
By \eqref{inv1} and \eqref{inv2}, \eqref{invarianzadef} holds for $\ph(x)=e^{i\ix{h}{x}}$. 

If $\ph\in C_b(X)$ by Proposition \ref{approssimazione} there exist a 2-sequence $\{\varphi_{n,m}\}_{n,m\in\Nat}\subseteq \mathcal{E}_{r}(X)$, $\{\varphi_{n}\}_{n\in\Nat}\subseteq \mathcal{F}_{r}C_b(X)$ and $\{c_n\}_{n\in\Nat}\subseteq [0,+\infty)$ such that 
\begin{align}
&\norm{\varphi_{n,m}}_{C_b(X)}\leq c_n\norm{\varphi_n}_{C_b(X)},\qquad \  n,m\in\Nat,\label{1d}\\
&\norm{\varphi_{n}}_{C_b(X)}\leq\norm{\varphi}_{C_b(X)},\qquad \quad \quad \quad n\in\Nat,\label{2d}\\
&\lim_{m\rightarrow +\infty}\abs{\varphi_{n,m}(x)-\varphi_n(x)}=0,\qquad \ n\in\Nat,\ x\in X,\label{3d}\\
&\lim_{n\rightarrow +\infty}\abs{\varphi_{n}(x)-\varphi(x)}=0,\qquad \quad\ \ \ x\in X.\label{4d}
\end{align}
Of course $\ph_{n,m}$ satisfies \eqref{invarianzadef} for every $n,m\in\Nat$. To prove that $\ph$ satisfies \eqref{invarianzadef} we apply two times the dominated convergence theorem: the first time as $m\rightarrow+\infty$ thanks to \eqref{1d} and \eqref{3d} and the second as $n\rightarrow+\infty$ thanks to \eqref{2d} and \eqref{4d}.

\end{proof}

\begin{oss}\label{invdet} We consider the problem 
\begin{equation}
\begin{cases}
\dfrac{\partial u}{\partial t}(t,x)=A(t)u(t,\cdot)(x),\ \  s<t\\
u(s,x)=y\in X
\end{cases}
\end{equation}
whose corresponding transition evolution operator  $\{V_{s,t}\}_{(s,t)\in \overline{\Delta}}$ is given by 
\begin{equation}\label{Vst}
V_{s,t}\ph(x)=\ph(U(t,s)x),\ \ \ph\in C_b(X),\ x\in X,\ s\leq t.
\end{equation}
In this case \eqref{invarianzadef} reads as
\begin{equation}
\int_X\ph(U(t,s)x)\, \mu_s(dx)=\int_X\ph(x)\, \mu_t(dx),\ \  s\leq t
\end{equation}
and by Proposition \ref{prop5.2} $\{\mu_r\}_{r\in\R}$ is an evolution system of measures for $\{V_{s,t}\}_{(s,t)\in \overline{\Delta}}$ in $\R$ if and only if 
\begin{equation}\label{invarianzaU}
\rwhat{\mu}_t(h)= \rwhat{\mu}_s(U(t,s)^\star h),\ \ s\leq t,\ h\in X.
\end{equation}
\end{oss}

We recall the infinite dimensional version of the Bochner theorem for the characteristic functions of Borel probability measures in separable Hilbert spaces (see \cite[Thm 2.27 pag. 61]{MR3236753}).

\begin{thm}[Bochner]\label{bochner} Let X be a separable Hilbert space and let $\ph:X\longrightarrow\mathbb{C}$.  $\ph$ is the characteristic function of a probability measure $\mu$ on $(X,\mathcal{B}(X))$ if and only if
\begin{enumerate}
\item[(1)] $\ph$ is continuous and $\ph(0)=1$,
\item[(2)] $\ph$ is a positive definite function, namely for every $k\in\Nat$ and every choice of $x_1,...,x_k\in X$ and $c_1,..,c_k\in\mathbb{C}$ we have
\begin{equation}
\sum_{i,j=1}^k c_i\overline{c_j}\ph(x_i-x_j)\geq 0,
\end{equation}
\item[(3)]for every $\ep>0$ there exists a non-negative nuclear operator $S_\ep$ such that 
\begin{equation}
1-\mathfrak{Re}\ph(x)\leq \ep,
\end{equation}
for all $x\in X$ satisfying $\ix{S_\ep x}{x}\leq 1$.
\end{enumerate} 
\end{thm}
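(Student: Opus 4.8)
The plan is to follow the classical Bochner--Minlos--Sazonov argument; since this is a standard result we only indicate the structure and refer to \cite[Thm 2.27]{MR3236753} for the details. The necessity of (1)--(3) is elementary. For (1), continuity of $\widehat{\mu}$ follows from dominated convergence, since $|e^{i\ix{x}{y}}|=1$ and $\mu$ is finite, while $\widehat{\mu}(0)=\mu(X)=1$. For (2), for any $x_1,\dots,x_k\in X$ and $c_1,\dots,c_k\in\mathbb{C}$ one simply rewrites
\[
\sum_{i,j=1}^k c_i\overline{c_j}\,\widehat{\mu}(x_i-x_j)=\int_X\Bigl|\sum_{i=1}^k c_i\,e^{i\ix{x_i}{y}}\Bigr|^2\mu(dy)\geq 0 .
\]
For (3), I would use that every Borel probability measure on a separable Hilbert space is tight ($X$ being Polish): given $\ep>0$ choose a compact set $K\subseteq X$ with $\mu(X\setminus K)\leq\ep/4$, and split, using $1-\cos\te\leq\te^2/2$,
\[
1-\mathfrak{Re}\,\widehat{\mu}(x)=\int_X\bigl(1-\cos\ix{x}{y}\bigr)\mu(dy)\leq\tfrac12\int_K\ix{x}{y}^2\,\mu(dy)+2\,\mu(X\setminus K).
\]
Setting $S_\ep:=\tfrac{2}{\ep}\int_K y\otimes y\,\mu(dy)$, this operator is non-negative and nuclear because $\Tr{\int_K y\otimes y\,\mu(dy)}=\int_K\norm{y}_X^2\,\mu(dy)\leq\sup_{y\in K}\norm{y}_X^2<+\infty$; and if $\ix{S_\ep x}{x}\leq 1$ then $\int_K\ix{x}{y}^2\mu(dy)\leq\ep/2$, whence $1-\mathfrak{Re}\,\widehat{\mu}(x)\leq\ep$.

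For the sufficiency I would argue in two stages. First, (1) and (2) restricted to any finite-dimensional subspace $F\subseteq X$ yield, via the finite-dimensional Bochner theorem, a Borel probability measure $\mu_F$ on $F$ whose characteristic function is $\varphi|_F$; these measures form a projective family, so Kolmogorov's extension theorem produces a finitely additive cylinder measure $\mu_0$ on the algebra of cylinder sets of $X$ with $\widehat{\mu_0}=\varphi$. Second --- and this is the delicate point --- one must show that $\mu_0$ is countably additive and hence extends to a Borel probability measure on $X$; equivalently, that the finite-dimensional marginals of $\mu_0$ are uniformly tight. This is exactly where hypothesis (3) enters: the operators $S_\ep$ make $\varphi$ continuous for the Sazonov topology, and a standard estimate then shows that for every $\ep>0$ the marginals of $\mu_0$ put mass at least $1-c\,\ep$ on an ellipsoid associated with the nuclear operator $S_\ep$, which is precompact in $X$. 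Prokhorov's theorem then yields a Borel probability measure $\mu$ with the prescribed finite-dimensional distributions, so that $\widehat{\mu}=\varphi$.

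I expect the tightness/countable-additivity step of the converse to be the main obstacle: passing from a finitely additive cylinder measure to a genuine Borel measure on an infinite-dimensional space is not formal, and is precisely the content of the Minlos--Sazonov theorem (and the reason condition (3) cannot be dropped). A complete proof is given in \cite[Thm 2.27]{MR3236753}, so we omit the details.
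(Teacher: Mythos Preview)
The paper does not prove this theorem at all: it is stated as a recalled classical result with a reference to \cite[Thm~2.27]{MR3236753}, and no argument is given. Your sketch is correct and follows the standard Bochner--Minlos--Sazonov route (elementary necessity; for sufficiency, build a cylinder measure via finite-dimensional Bochner plus Kolmogorov, then use condition~(3) to obtain Sazonov continuity and hence countable additivity), and you cite the same source for the details. Since there is no proof in the paper to compare against, your proposal is an appropriate expansion of what the paper merely quotes.
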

\begin{oss}
Condition (3) is related to the continuity with respect to the Sazonov topology that is relevant in our setting (see \cite[Thm\,7.13.7]{MR2267655}). If $X=\R^n$ then condition (3) is verified and Theorem \ref{bochner} is the Bochner Theorem (see \cite[Thm.\,7.13.1]{MR2267655}).
\end{oss}

To prove Theorem \ref{esistenza} we need the following proposition that should be known but we were not able to find any reference. For this reason we include a proof of this result.

 We say that a sequence $\{L_k\}_{k\in\Nat}\subseteq {\mathcal L}(X)$ is increasing if $L_{k+1}-L_k$ is nonnegative for any $k\in\Nat$, i.e., $\langle (L_{k+1}-L_k)x,x\rangle \geq 0$ for every $x\in X$ and for every $k\in\Nat$.  

\begin{prop}\label{increasingConvergence}
Let $\{T_k\}_{k\in\Nat}\subseteq {\mathcal L}_1(X)$ be an increasing sequence of self-adjoint non-negative operators having finite trace such that $\displaystyle{\sup_{k\in\Nat}\|T_k\|_{{\mathcal L}_1(X)}<\infty}$. Then, there is $T\in{\mathcal L}_1(X)$ such that $T_k\xrightarrow{k\rightarrow+\infty} T$ in ${\mathcal L}_1(X)$.
\end{prop}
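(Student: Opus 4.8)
The plan is to show that $\{T_k\}$ is Cauchy in $\mathcal{L}_1(X)$ and invoke completeness of the trace class. First I would observe that, since the sequence of real numbers $\Tr{T_k} = \sum_{n}\ix{T_k e_n}{e_n}$ is nondecreasing (each $T_{k+1}-T_k$ is nonnegative) and bounded above by hypothesis, it converges; call the limit $\tau$. The key quantitative fact is that for a nonnegative self-adjoint trace class operator $S$ one has $\norm{S}_{\mathcal{L}_1(X)} = \Tr{S}$. Applying this to $S = T_k - T_j$ for $k > j$ — which is nonnegative and trace class — gives
\begin{equation*}
\norm{T_k - T_j}_{\mathcal{L}_1(X)} = \Tr{T_k - T_j} = \Tr{T_k} - \Tr{T_j}.
\end{equation*}
Since $\{\Tr{T_k}\}$ is a Cauchy sequence in $\R$, the right-hand side tends to $0$ as $j,k\to\infty$, so $\{T_k\}$ is Cauchy in $\mathcal{L}_1(X)$.

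Next I would use that $(\mathcal{L}_1(X), \norm{\cdot}_{\mathcal{L}_1(X)})$ is a Banach space, hence there exists $T\in\mathcal{L}_1(X)$ with $T_k\to T$ in $\mathcal{L}_1(X)$. It remains to check $T$ has the claimed structural properties, though the statement as written only asks for $T\in\mathcal{L}_1(X)$. For completeness one notes that $\mathcal{L}_1$-convergence implies operator-norm convergence (since $\norm{\cdot}_{\mathcal{L}(X)}\leq\norm{\cdot}_{\mathcal{L}_1(X)}$), so $T$ inherits self-adjointness and nonnegativity as pointwise limits of $\ix{T_k x}{x}\geq 0$; alternatively, $T = \sup_k T_k$ in the sense of quadratic forms, with $\ix{Tx}{x} = \lim_k \ix{T_k x}{x}$ for every $x$, which also identifies the limit concretely.

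The only genuinely nontrivial ingredient is the identity $\norm{S}_{\mathcal{L}_1(X)} = \Tr{S}$ for nonnegative $S$, together with the completeness of $\mathcal{L}_1(X)$; both are standard (see e.g. Reed–Simon or Gohberg–Krein), but since the authors remark they could not find a reference for the proposition, I expect they will spell these out. The main obstacle, if one insists on being self-contained, is proving completeness of the trace class — or, avoiding that, arguing directly: from the form-monotone bound $0\leq \ix{(T_k-T_j)x}{x}$ and the trace estimate one first gets that $\ix{T_k x}{x}$ converges for each $x$ to some quadratic form $q(x)$, shows $q$ is bounded (by $\tau$ via Fatou applied to the basis expansion), represents $q$ by a bounded self-adjoint $T$, and finally upgrades pointwise convergence to $\mathcal{L}_1$-convergence using $\norm{T-T_j}_{\mathcal{L}_1(X)} = \Tr{T-T_j} = \tau - \Tr{T_j}\to 0$, where $\Tr{T} = \tau$ follows from monotone convergence on the diagonal sums. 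This dodges any appeal to completeness of $\mathcal{L}_1(X)$ and is the route I would actually write out.
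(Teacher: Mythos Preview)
Your proof is correct, and considerably shorter than the paper's. Both routes you describe work: the first uses $\norm{S}_{\mathcal L_1(X)}=\Tr{S}$ for nonnegative $S$ together with completeness of $\mathcal L_1(X)$; the second constructs $T$ by hand and then closes with the same trace identity. The paper takes neither of these shortcuts. Instead it first establishes \emph{strong} convergence $T_kx\to Tx$ by a Cauchy--Schwarz estimate on $T_{nm}:=T_n-T_m$ (namely $\norm{T_{nm}x}_X\le\alpha\ix{T_{nm}x}{x}$) combined with a small contradiction argument to force $\ix{T_{nm}x}{x}\to 0$; then it shows $T\in\mathcal L_1(X)$ via Fatou on the diagonal sums $\sum_n\ix{Te_n}{e_n}$; and finally it upgrades to trace-norm convergence by dominated convergence on the series $\Tr{T-T_k}=\sum_n\ix{(T-T_k)e_n}{e_n}$, using $\ix{T_ke_n}{e_n}\le\ix{Te_n}{e_n}$. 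What your approach buys is brevity and transparency; what the paper's approach buys is full self-containment, never invoking either the completeness of $\mathcal L_1(X)$ or the equality $\norm{S}_{\mathcal L_1}=\Tr{S}$ as black boxes.
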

\begin{proof}
We set $$\al:=\sup_{k\in\Nat}\|T_k\|_{\mathcal {L}_1(X)}$$ and we first show that there exists $T\in{\mathcal L}(X)$ such that $T_k\xrightarrow{k\rightarrow+\infty} T$ in ${\mathcal L}(X)$. 
By hypothesis, we have 
\[
\eta(x):=\lim_{k\to\infty}\ix{x}{T_kx}\leq \al
\]
whenever $x\in X$ and $\norm{x}_X\leq 1$. Set now $T_{nm}=T_n-T_m$ for $m<n$, so that the $L_{nm}$ are non-negative. For every $x,y\in X$ we have 
\begin{align*}
|\langle y,T_{nm}x\rangle_X| &= 
|\langle T_{nm}^{\frac{1}{2}}x,T_{nm}^{\frac{1}{2}}y\rangle_X| \leq 
\|T_{nm}^{\frac{1}{2}}x\|_X\|T_{nm}^{\frac{1}{2}}y\|_X
\\
&= \langle x,T_{nm}x\rangle_X
\langle y,T_{nm}y\rangle_X\leq 
\eta(y)\langle x,T_{nm}x\rangle_X.
\end{align*}
Therefore,
\begin{equation}\label{main}
\|T_{nm}x\| _X= \sup_{\|y\|_X\leq 1} |\langle y,T_{nm}x\rangle_X|
\leq \al \langle x,T_{nm}x\rangle_X\quad \text{for every \ }n,m\in\Nat.
\end{equation}
We claim that the right hand side of \eqref{main} vanishes as $n,m\longrightarrow+\infty$.
We argue by contradiction and we assume that there exist $x\in X$ and $\be>0$ such that for every $m\in\Nat$ there exists $p(m)\in\Nat$ such that $\langle(T_{m+p(m)}-T_m)x,x\rangle_X\geq \be>0$. Then, defining $m_1=1,p_1=p(m_1)$ and by iteration $m_{j+1}=m_j+p_j$ we would have 
\[
\langle (T_{m_N+p_N}-T_1)x,x\rangle_X = 
\sum_{j=1}^N\langle(T_{m_j+p_j}-T_{m_j})x,x\rangle _X\geq N\be\longrightarrow +\infty, \quad \text{as}\ N\longrightarrow+\infty
.\]
which contradicts $\eta(x)\leq \al$ for every $x\in X$. Summarizing, $\{T_kx\}_{k\in\Nat}$ is a Cauchy sequence for every $x\in X$ and defines the self-adjoint nonnegative limit operator $\displaystyle{Tx=\lim_{k\rightarrow+\infty}T_kx}$. 

Let us show that $T\in{\mathcal L}_1(X)$ and that $T_k\xrightarrow{k\rightarrow+\infty}  T$ in ${\mathcal L}_1(X)$. For an arbitrary Hilbert basis $\{e_k\}_{k\in\Nat}$ by the Fatou lemma applied to the series we have  
\[
{\rm Tr}(T) = 
\sum_{k=1}^\infty \langle\lim_{n\to\infty}T_ne_k,e_k\rangle_X
\leq \liminf_{n\to\infty}\sum_{k=1}^\infty \langle T_ne_k,e_k\rangle_X=\liminf_{n\to\infty}{\rm Tr}(T_n)
\]
and $
T\in{\mathcal L}_1(X)$. To prove the convergence, notice that by monotonicity $\langle T_ke_n,e_n\rangle_X\leq \langle Te_n,e_n\rangle_X$ for every $k,n\in\Nat$, whence the thesis follows by the Dominated Convergence Theorem applied to series
\[
{\rm Tr}\, (T-T_k) = \sum_{n=1}^\infty \langle (T-T_k)e_n,e_n\rangle \longrightarrow 0, \quad \text{as }k\longrightarrow+\infty
.\]  
\end{proof}

\begin{thm} \label{esistenza}
Assume that Hypothesis \ref{1} holds true. If for every $t\in\R$
\begin{equation}\label{tracebound}
\sup_{s<t}\left[\Tr{Q(t,s)}\right]<+\infty
\end{equation}
then the operator $$Q(t,-\infty):=\int_{-\infty}^tU(t,r) Q(r)U(t,r)^\star dr$$ is well defined and it has finite trace for every $t\in\R$ . The family of measures $\{\, \g_t\}_{t\in\R}$ given by
\begin{equation}\label{gammat}
\g_t:=\Ng_{0,Q(t,-\infty)},\ \ t\in\R,
\end{equation}
is an evolution system of measures for $\{\pst\}_{(s,t)\in \overline{\Delta}}$ in $\R$. 

Moreover $\{\nu_t\}_{t\in\R}$ is an evolution system of measures for $\{\pst\}_{(s,t)\in \overline{\Delta}}$ in $\R$ if and only if 
\[
\nu_t=\, \g_t\star\mu_t,\ \ t\in \R,
\]
where $\{\mu_t\}_{t\in\R}$ is an evolution system of measures  for $\{V_{s,t}\}_{(s,t)\in \overline{\Delta}}$ in $\R$ and  $\{V_{s,t}\}_{(s,t)\in \overline{\Delta}}$ is given by \eqref{Vst}

\end{thm}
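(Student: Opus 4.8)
The statement has three parts, and the plan is to deduce all of them from the characterisation of evolution systems of measures in Proposition~\ref{prop5.2} (together with its analogue for $\{V_{s,t}\}$ recorded in Remark~\ref{invdet}). For the first part I would fix $t\in\R$ and observe that for $s_2<s_1<t$ the difference $Q(t,s_2)-Q(t,s_1)=\int_{s_2}^{s_1}U(t,r)Q(r)U(t,r)^\star\,dr$ is a non-negative self-adjoint operator, so that $s\mapsto Q(t,s)$ is non-increasing in the order of $\op(X)$. Hence, along any sequence $s_k\downarrow-\infty$, the $Q(t,s_k)$ form an increasing sequence of non-negative self-adjoint trace class operators with $\sup_k\Tr{Q(t,s_k)}\le\sup_{s<t}\Tr{Q(t,s)}<+\infty$ by \eqref{tracebound}, and Proposition~\ref{increasingConvergence} produces a limit $Q(t,-\infty)\in\op_1(X)$ to which the improper integral converges in $\op_1(X)$. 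This limit does not depend on the sequence, since $\ix{Q(t,s)h}{h}$ is monotone in $s$ for each $h\in X$ and, by monotone convergence, $\ix{Q(t,-\infty)h}{h}=\int_{-\infty}^t\norm{Q(r)^{1/2}U(t,r)^\star h}_X^2\,dr$; in particular $Q(t,-\infty)$ is non-negative, self-adjoint and of finite trace, so $\g_t:=\Ng_{0,Q(t,-\infty)}$ is a well-defined centred Gaussian measure on $X$.

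For the second part, by Proposition~\ref{prop5.2} it is enough to check $\rwhat{\g_t}(h)=e^{-\frac12\ix{Q(t,s)h}{h}}\rwhat{\g_s}(\uts^\star h)$ for every $s\le t$; since $\rwhat{\g_r}(h)=e^{-\frac12\ix{Q(r,-\infty)h}{h}}$, this is equivalent to the operator identity $Q(t,-\infty)=Q(t,s)+\uts\,Q(s,-\infty)\,\uts^\star$, which I would obtain by splitting $\int_{-\infty}^t=\int_{-\infty}^s+\int_s^t$ and using the evolution law $U(t,r)=\uts U(s,r)$ for $r\le s\le t$ to pull the bounded operator $\uts$ out of the first integral. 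The ``if'' direction of the third part is then immediate: if $\{\mu_t\}$ is an evolution system of measures for $\{V_{s,t}\}$, so that $\rwhat{\mu_t}(h)=\rwhat{\mu_s}(\uts^\star h)$ by Remark~\ref{invdet}, then $\g_t\star\mu_t$ is a Borel probability measure with characteristic function $\rwhat{\g_t}\,\rwhat{\mu_t}$, and multiplying the consistency relations for $\g$ and for $\mu$ yields $\rwhat{\g_t}(h)\rwhat{\mu_t}(h)=e^{-\frac12\ix{Q(t,s)h}{h}}\rwhat{\g_s}(\uts^\star h)\rwhat{\mu_s}(\uts^\star h)$, which is exactly the relation of Proposition~\ref{prop5.2} for $\{\g_t\star\mu_t\}$; hence $\{\g_t\star\mu_t\}$ is an evolution system of measures for $\{\pst\}$.

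The substantive point is the ``only if'' direction. Given an evolution system of measures $\{\nu_t\}$ for $\{\pst\}$, I would need to recover $\mu_t$ from the formal ratio $g_t(h):=\rwhat{\nu_t}(h)/\rwhat{\g_t}(h)=e^{\frac12\ix{Q(t,-\infty)h}{h}}\rwhat{\nu_t}(h)$, which makes sense because the Gaussian characteristic function $\rwhat{\g_t}$ never vanishes. The crucial remark is that $g_t$ is the pointwise limit, as $s\to-\infty$, of the characteristic functions $h\mapsto\rwhat{\nu_s}(\uts^\star h)=e^{\frac12\ix{Q(t,s)h}{h}}\rwhat{\nu_t}(h)$ of the genuine Borel probability measures $(\uts)_\#\nu_s$ (using Proposition~\ref{prop5.2} for $\nu$ and the monotone convergence $\ix{Q(t,s)h}{h}\uparrow\ix{Q(t,-\infty)h}{h}$). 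Consequently $g_t$ is continuous on $X$, positive definite, $g_t(0)=1$, and $\abs{g_t}\le1$. To conclude that $g_t$ is the characteristic function of a Borel probability measure $\mu_t$ I would invoke the Bochner theorem (Theorem~\ref{bochner}), the only delicate point being condition~(3): given $\ep>0$, pick a non-negative nuclear operator $S^1_\ep$ witnessing condition~(3) for $\nu_t$ (with tolerance $\ep/2$), put $S^2_\ep:=\tfrac2\ep\,Q(t,-\infty)$, so that $1-\rwhat{\g_t}(h)\le\tfrac12\ix{Q(t,-\infty)h}{h}\le\tfrac\ep2$ whenever $\ix{S^2_\ep h}{h}\le1$, and set $S_\ep:=S^1_\ep+S^2_\ep$; then, writing $g_t=\rwhat{\nu_t}+g_t(1-\rwhat{\g_t})$ and using $\abs{g_t}\le1$, on $\{\ix{S_\ep h}{h}\le1\}$ one has
\[
1-\mathfrak{Re}\,g_t(h)\le\bigl(1-\mathfrak{Re}\,\rwhat{\nu_t}(h)\bigr)+\bigl(1-\rwhat{\g_t}(h)\bigr)\le\ep .
\]
Having obtained $\mu_t$, the measure $\g_t\star\mu_t$ has characteristic function $\rwhat{\g_t}\,g_t=\rwhat{\nu_t}$, so $\nu_t=\g_t\star\mu_t$ by uniqueness of characteristic functions, and dividing the Proposition~\ref{prop5.2} identities for $\nu$ and $\g$ gives $\rwhat{\mu_t}(h)=\rwhat{\mu_s}(\uts^\star h)$, i.e.\ $\{\mu_t\}$ is an evolution system of measures for $\{V_{s,t}\}$ by Remark~\ref{invdet}.

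I expect the main obstacle to be precisely this deconvolution step: in an infinite-dimensional Hilbert space the pointwise limit of characteristic functions need not be a characteristic function, and it is the Sazonov-type condition~(3) of the Bochner theorem — fed by the fact that $\nu_t$ and $\g_t$ are themselves measures — that makes it legitimate to ``divide out'' the Gaussian factor. Everything else (the monotone limit defining $Q(t,-\infty)$, the cocycle identity for the covariances, and the ``if'' direction) is routine once Proposition~\ref{prop5.2} is in hand.
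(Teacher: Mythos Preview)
Your proof is correct and follows essentially the same route as the paper: Proposition~\ref{increasingConvergence} for the limit $Q(t,-\infty)$, the cocycle identity $Q(t,-\infty)=Q(t,s)+\uts Q(s,-\infty)\uts^\star$ for the second part, and Proposition~\ref{prop5.2} together with the Bochner theorem for the characterisation of all evolution systems.

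The only noteworthy difference is in the verification of the Sazonov condition~(3) for the ratio $g_t=\rwhat{\nu_t}/\rwhat{\g_t}$. You split $S_\ep=S^1_\ep+S^2_\ep$ and control the two pieces of $g_t=\rwhat{\nu_t}+g_t(1-\rwhat{\g_t})$ separately; the paper instead observes that for $\ep\in(0,1)$ the single nuclear operator $S_\ep$ coming from $\nu_t$ already suffices, since on $\{\ix{S_\ep h}{h}\le 1\}$ one has $\mathfrak{Re}\,\rwhat{\nu_t}(h)\ge 1-\ep>0$ and then $1-\mathfrak{Re}\,g_t(h)=1-e^{\frac12\ix{Q(t,-\infty)h}{h}}\,\mathfrak{Re}\,\rwhat{\nu_t}(h)\le 1-\mathfrak{Re}\,\rwhat{\nu_t}(h)\le\ep$ because the exponential factor is $\ge 1$. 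Both arguments are valid; the paper's is slightly more economical, while yours makes the role of the Gaussian factor more explicit. Your justification of positive definiteness of $g_t$ as a pointwise limit of the characteristic functions of $(\uts)_\#\nu_s$ is in fact cleaner than the paper's somewhat elliptical statement on this point.
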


\begin{proof}
 By Proposition \ref{increasingConvergence} $Q(t,-\infty)$ is well defined and it belongs to $\op^+_1(X)$. 
Now we show that the measures $\g_t$ satisfy \eqref{invarianza}. Let $s\leq t$ and $h\in X$, then
\begin{align}
&U(t,s)Q(s,-\infty)U(t,s)^\star h=\int_{-\infty}^s U(t,s)U(s,r)Q(r)U^\star(s,r)U(t,s)^\star h\,dr\nonumber\\
&=\int_{-\infty}^s U(t,r)Q(r)U(t,r)^\star h\,dr=Q(t,-\infty)h-Q(t,s)h.\nonumber
\end{align}
Moreover
\begin{align}
\rwhat{\, \g}_t(h)&=e^{-\frac{1}{2}\ix{Q(t,-\infty)h}{h}}\nonumber \\
\rwhat{\g}_t(U(t,s)^\star h)&=e^{-\frac{1}{2}\ix{Q(s,-\infty)U(t,s)^\star h}{U(t,s)^\star h}}.\nonumber
\end{align}
Therefore, we have
\begin{align}\label{qts}
\ix{Q(s,-\infty)U(t,s)^\star h}{U(t,s)^\star h}&=\ix{U(t,s)Q(s,-\infty)U(t,s)^\star h}{h}\nonumber \\
&=\ix{Q(t,-\infty)h-Q(t,s)h}{h}.
\end{align}
Hence, we get
\begin{equation}
\rwhat{\g}_s(U(t,s)^\star h)=e^{i\ix{g(t,-\infty)-g(t,s)}{h}}e^{-\frac{1}{2}\ix{Q(t,-\infty)h}{h}}e^{\frac{1}{2}\ix{Q(t,s)h}{h}}
\end{equation}
and
\begin{equation}
\rwhat{\, \g}_t(h)=e^{-\frac{1}{2}\ix{Q(t,s) h}{h}}\rwhat{\g_s}(U(t,s)^\star h).
\end{equation}
Now we prove the last statement. Let $\{\mu_t\}_{t\in\R}$ be an evolution system of measures in $\R$ for $V_{s,t}$, we set $\nu_t=\g_t\star\mu_t$ and we show that $\{\nu_t\}_{t\in\R}$ is an evolution system of measures in $\R$ for $\pst$, namely we show that for all $\nu_t$ satisfies \eqref{invarianza} for all $t\in\R$. 
Let $t\in\R$ and $h\in X$, then for every $s\leq t$ by \eqref{qts} we have
\begin{align}
&\rwhat{\nu}_t(h)=\rwhat{\g}_t(h)\rwhat{\mu}_t(h)=e^{-\frac{1}{2}\ix{Q(t,-\infty)h}{h}}\rwhat{\mu}_s(U(t,s)^\star h)\nonumber \\
&=e^{-\frac{1}{2}\ix{Q(t,s)h}{h}}e^{-\frac{1}{2}\ix{Q(s,-\infty)U(t,s)^\star h}{U(t,s)^\star h}}\rwhat{\mu}_s(U(t,s)^\star h)\nonumber \\
&=e^{-\frac{1}{2}\ix{Q(t,s)h}{h}}\rwhat{\nu}_s(U(t,s)^\star h).\nonumber
\end{align}
Conversely, if $\{\nu_t\}_{t\in\R}$ is an evolution system of measures in $\R$  for $\pst$,
then 
\begin{equation}
\rwhat{\nu}_t(h)=e^{-\frac{1}{2}\ix{Q(t,s) h}{h}}\rwhat{\nu}_s(U(t,s)^\star h),\ \  h\in X,\ s\leq t.
\end{equation}
We set 
\begin{equation}\label{psit}
\psi_t(h)=\lim_{s\rightarrow-\infty}\rwhat{\nu}_s(U(t,s)^\star h)=e^{\frac{1}{2}\ix{Q(t,-\infty) h}{h}}\rwhat{\nu}_t(h),
\end{equation}
and so
\begin{equation}\label{convoluzionemunu}
\rwhat{\nu}_t(h)=e^{-\frac{1}{2}\ix{Q(t,-\infty) h}{h}}\lim_{s\rightarrow-\infty}\rwhat{\nu}_s(U(t,s)^\star h)=\rwhat{\g}_t(h)\psi_t(h).
\end{equation}
To conclude the proof is sufficient to show that
\begin{enumerate}
\item[(a)] for all $t\in\R$, $\psi_t$ is the characteristic function of a Borel probability measure $\mu_t$;
\item[(b)] the family $\{\mu_t\}_{t\in\R}$ is an evolution system of measure for $V_{s,t}$.
\end{enumerate}
To prove (a), we note first that $e^{-i\ix{g(t,-\infty)}{h}}\rwhat{\nu}_t(h)$ is the characteristic function of the measure $\zeta_t:=\delta_{g(t,-\infty)}\star \nu_t$ . Then, we apply Theorem \ref{bochner} to $\zeta_t$ and we obtain that 
\begin{enumerate}
\item $\rwhat{\zeta}_t(0)=1$,
\item $\rwhat{\zeta}_t$ is positive definite,
\item for every $\ep>0$ there exists a non-negative nuclear operator $S_\ep$ such that 
\begin{equation}
1-\mathfrak{Re}\,\rwhat{\zeta}_t(h)\leq \ep,
\end{equation}
for all $h\in X$ satisfying $\ix{S_\ep h}{h}\leq 1$.
\end{enumerate}
Since $$\psi_t(h)=\frac{\rwhat{\nu}_t(h)}{\rwhat{\, \g}_t(h)}=e^{\frac{1}{2}\ix{Q(t,-\infty) h}{h}}\rwhat{\zeta}_t(h),\quad h\in X,$$ then $\psi_t(0)=1$, $\psi_t$ is positive definite and
\begin{equation}
1-\mathfrak{Re}\,\psi_t(h)\leq 1-\mathfrak{Re}\,\rwhat{\zeta}_t(h)\leq\ep,
\end{equation}
for all $h\in X$ satisfying $\ix{S_\ep h}{h}\leq 1$. Hence, by theorem \ref{bochner} for all $t\in\R$ there exists a Borel probability measure $\mu_t$ such that $\rwhat{\mu}_t\equiv \psi_t$ and for all $h\in X$ we get
\begin{align}
\rwhat{\mu}_s(\uts^\star h)&= \psi_s(\uts^\star h)=\lim_{\si\rightarrow-\infty}\rwhat{\nu}_r(U(s,\si)^\star\uts^\star h)\nonumber \\
&=\lim_{\si\rightarrow-\infty}\rwhat{\nu}_r(U(t,\si)^\star h)=\rwhat{\mu}_t(h). \nonumber
\end{align}
Hence $\{\mu_t\}_{t\in\R}$ is an evolution system of measure for $V_{s,t}$ and the statement follows.
\end{proof}

\begin{thm}\label{unicità}
Assume that Hypothesis \ref{1} holds true. If for every $t\in\R$
\begin{equation*}
\sup_{s<t}\left[\Tr{Q(t,s)}\right]<+\infty
\end{equation*}
and for every $t\in\R$ and $x\in X$
\begin{equation}\label{ergoU}
\lim_{s\rightarrow-\infty}U(t,s)x=0,
\end{equation}
then for every $f\in C_b(X)$ we have
\begin{equation}\label{comp2}
\lim_{s\rightarrow-\infty}\pst f(x)=m_t(f):=\int_Xf(y)\, \, \g_t(dy),\quad t\in\R,\; x\in X.
\end{equation}
where $\{\g_t\}_{t\in\R}$ is the evolution system of measures for $\{\pst\}_{(s,t)\in\overline{\Delta}}$ in $\R$ given by \eqref{gammat}.
\end{thm}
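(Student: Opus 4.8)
The plan is to identify $P_{s,t}f(x)$ with the integral of $f$ against the Gaussian measure $\mu_s:=\mathcal N_{U(t,s)x,\,Q(t,s)}$ and to prove that $\mu_s$ converges weakly to $\gamma_t=\mathcal N_{0,Q(t,-\infty)}$ as $s\to-\infty$; then \eqref{comp2} is exactly the definition of weak convergence tested on $f\in C_b(X)$. By Theorem \ref{esistenza} the standing hypothesis $\sup_{s<t}\Tr{Q(t,s)}<+\infty$ guarantees that $Q(t,-\infty)\in\op^+_1(X)$ and that $\gamma_t$ is a Gaussian evolution system of measures, so the statement is meaningful.

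First I would record the two basic convergences. The means converge by \eqref{ergoU}: $U(t,s)x\to 0$ in $X$ as $s\to-\infty$. For the covariances, $\ix{Q(t,s)h}{h}=\int_s^t\norm{B(r)^\star U(t,r)^\star h}_X^2\,dr$, so by monotone convergence $\ix{Q(t,s)h}{h}\uparrow\ix{Q(t,-\infty)h}{h}$ as $s\downarrow-\infty$, and in particular $0\le Q(t,s)\le Q(t,-\infty)$ for every $s<t$. Using the explicit characteristic function of a Gaussian measure, for every $h\in X$,
\[
\widehat{\mu_s}(h)=e^{\,i\ix{U(t,s)x}{h}-\frac12\ix{Q(t,s)h}{h}}\longrightarrow e^{-\frac12\ix{Q(t,-\infty)h}{h}}=\widehat{\gamma_t}(h)\qquad(s\to-\infty).
\]
This already yields \eqref{comp2} for every finite linear combination of the functions $\ph^{(h)}$, i.e. for $f\in\mathcal E_t(X)$.

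To pass from convergence of characteristic functions to weak convergence of the $\mu_s$, the key step is the uniform tightness of $\{\mu_s:s<t\}$. Fix an orthonormal basis $\{e_k\}_{k\in\Nat}$ of $X$ diagonalizing $Q(t,-\infty)$. Since $0\le Q(t,s)\le Q(t,-\infty)$, for every $N\in\Nat$ we have
\[
\int_X\sum_{k>N}\ix{y}{e_k}^2\,\mathcal N_{0,Q(t,s)}(dy)=\sum_{k>N}\ix{Q(t,s)e_k}{e_k}\le\sum_{k>N}\ix{Q(t,-\infty)e_k}{e_k},
\]
whose right-hand side tends to $0$ as $N\to\infty$, uniformly in $s<t$; likewise $\int_X\norm{y}_X^2\,\mathcal N_{0,Q(t,s)}(dy)=\Tr{Q(t,s)}\le\Tr{Q(t,-\infty)}$. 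By Chebyshev's inequality these two facts produce, for every $\ep>0$, a set $K_\ep\subseteq X$ which is bounded and flat at infinity (hence relatively compact, by the compactness criterion in a separable Hilbert space) with $\mathcal N_{0,Q(t,s)}(X\sm K_\ep)<\ep$ for all $s<t$; so the centered family $\{\mathcal N_{0,Q(t,s)}\}_{s<t}$ is uniformly tight. Since $s\mapsto U(t,s)x$ is continuous on $(-\infty,t]$ and tends to $0$, the set $\{U(t,s)x:s<t\}\cup\{0\}$ is compact, and as $\mu_s=\delta_{U(t,s)x}\star\mathcal N_{0,Q(t,s)}$ the family $\{\mu_s:s<t\}$ is uniformly tight as well.

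Finally I would conclude by a routine subsequence argument: by Prokhorov's theorem $\{\mu_s:s<t\}$ is relatively weakly compact; any weak limit of $\mu_{s_n}$ along a sequence $s_n\to-\infty$ has characteristic function $\widehat{\gamma_t}$ by the first step, hence coincides with $\gamma_t$ by the uniqueness part of Theorem \ref{bochner}. Since every sequence $s_n\to-\infty$ thus admits a subsequence along which $\mu_{s_n}$ converges weakly to $\gamma_t$, the whole family converges, $\mu_s\Rightarrow\gamma_t$ as $s\to-\infty$, and therefore $P_{s,t}f(x)=\int_X f\,d\mu_s\to\int_X f\,d\gamma_t=m_t(f)$ for every $f\in C_b(X)$. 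I expect the tightness step to be the real obstacle: in infinite dimensions $C_b(X)$-functions need not be uniformly continuous, so the convergence of means and covariances cannot be turned into convergence of the integrals by a naive coupling estimate, and it is precisely the monotonicity $Q(t,s)\le Q(t,-\infty)$ — not merely the uniform trace bound — that makes $\{\mu_s\}$ tight. (Alternatively one could invoke the known criterion that Gaussian measures converge weakly iff their means converge and their covariances converge in trace norm, the latter being immediate here since $Q(t,-\infty)-Q(t,s)\ge 0$ and $\Tr{Q(t,-\infty)-Q(t,s)}\to 0$.)
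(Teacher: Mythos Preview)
Your argument is correct. The paper's own proof is a one-line citation: it observes that $U(t,s)x\to 0$ in $X$ and, via Proposition~\ref{increasingConvergence} (used in Theorem~\ref{esistenza}), that $Q(t,s)\to Q(t,-\infty)$ in $\mathcal L_1(X)$, and then invokes the standard criterion that Gaussian measures converge weakly once their means converge in $X$ and their covariances converge in trace norm. This is precisely the alternative you sketch in your final parenthetical remark.

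Your main argument takes the more self-contained route of proving tightness directly from the operator inequality $0\le Q(t,s)\le Q(t,-\infty)$ and then running Prokhorov plus identification of the limit via characteristic functions. That is exactly how the cited criterion is typically proved, so the two approaches are essentially the same under the hood; yours simply unpacks the black box. One minor remark: your tightness step does not actually need the basis to diagonalise $Q(t,-\infty)$, since the inequality $\langle Q(t,s)e_k,e_k\rangle\le\langle Q(t,-\infty)e_k,e_k\rangle$ holds for any orthonormal basis; the diagonalisation is harmless but unnecessary.
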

\begin{proof}
The statement follows by \cite[Example 3.8.15 page 135]{MR1642391} since $U(t,s)x \xrightarrow{s\rightarrow-\infty}0$ in $X$ and $Q(t,s)\xrightarrow{s\rightarrow-\infty} Q(t,-\infty)$ in $\op_1(X)$.
\end{proof}
\begin{oss}\label{omeganegativo}
We note that if $\{U(t,s)\}_{(s,t)\in\overline{\Delta}}$ verifies \eqref{omega} with $\zeta>0$ then it verifies \eqref{ergoU} and by \cite[Cor. 4.12]{ouy-roc2016} $\{\g_t\}_{t\in\R}$ is the unique evolution system of measures uniformly tight for $\{\pst\}_{(s,t)\in\overline{\Delta}}$.  Moreover, if \eqref{ergoU} does not hold then there may exist many evolution systems of measures for $\pst$ in $\R$, see Remark \ref{nonunico}.
\end{oss}

\begin{cor}\label{prop63}
Assume that Hypothesis \ref{1} holds true. If for every $t\in\R$
\begin{equation*}
\sup_{s<t}\left[\Tr{Q(t,s)}\right]<+\infty
\end{equation*}
and for every $t\in\R$ and $x\in X$
\[
\lim_{s\rightarrow-\infty}U(t,s)x=0,
\]  
then for every $\ph\in C_b(X)$ with strictly positive infimum and $t\in\R$ we have
\begin{equation}
\lim_{s\rightarrow-\infty}\int_X\pst \ph(x)\log\pst \ph(x) \g_t(dy)=m_t(\ph)\log m_t(\ph).
\end{equation}
\end{cor}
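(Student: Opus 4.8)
The plan is to deduce the statement from the pointwise convergence $\pst f(x)\to m_t(f)$ of Theorem \ref{unicità}, combined with the dominated convergence theorem. The only structural fact needed beyond Theorem \ref{unicità} is that the operators $\pst$ preserve two-sided bounds, which confines all the relevant quantities to a compact interval on which $u\mapsto u\log u$ behaves well.

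More precisely, first I would set $\delta:=\inf_{x\in X}\ph(x)>0$ and $M:=\norm{\ph}_\infty$, so that $\delta\le\ph\le M$. Since for $(s,t)\in\Delta$ one has $\pst\ph(x)=\int_X\ph(y)\,\Ng_{U(t,s)x,Q(t,s)}(dy)$, an average of $\ph$ against a probability measure, it follows that $\delta\le\pst\ph(x)\le M$ for every $x\in X$; hence every $\pst\ph$, and (passing to the limit) also $m_t(\ph)$, takes values in the compact interval $[\delta,M]$. On $[\delta,M]$ the function $\eta(u):=u\log u$ is continuous and bounded, say $\abs{\eta(u)}\le C$ for $u\in[\delta,M]$.

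Fixing $t\in\R$, Theorem \ref{unicità} gives $\pst\ph(x)\to m_t(\ph)$ as $s\to-\infty$ for every $x\in X$, so by continuity of $\eta$ on $[\delta,M]$ we get $\eta(\pst\ph(x))\to\eta(m_t(\ph))=m_t(\ph)\log m_t(\ph)$ pointwise in $x$. Each function $x\mapsto\eta(\pst\ph(x))$ is Borel measurable (because $\pst\ph\in B_b(X)$ by \cite[Thm. 2.6]{cerlun} and $\eta$ is continuous) and dominated by the constant $C$, which is $\g_t$-integrable since $\g_t$ is a probability measure. The dominated convergence theorem, applied along any sequence $s_n\downarrow-\infty$, then yields
\begin{equation*}
\lim_{s\to-\infty}\int_X\pst\ph(x)\log\pst\ph(x)\,\g_t(dx)=\int_Xm_t(\ph)\log m_t(\ph)\,\g_t(dx)=m_t(\ph)\log m_t(\ph),
\end{equation*}
the last step again using $\g_t(X)=1$. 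I do not expect any genuine obstacle: the one point deserving attention is the preservation of the bound $0<\delta\le\pst\ph\le M$, which simultaneously supplies the integrable dominating function and guarantees that the limit $m_t(\ph)$ stays bounded away from $0$, so that $\eta$ is continuous at it and $m_t(\ph)\log m_t(\ph)$ is indeed the correct limit.
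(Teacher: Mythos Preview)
Your proof is correct and follows essentially the same approach as the paper: pointwise convergence $\pst\ph(x)\to m_t(\ph)$ from Theorem~\ref{unicità}, composed with $u\mapsto u\log u$, and then the dominated convergence theorem with respect to~$\g_t$. The only cosmetic difference is that the paper invokes the $\tfrac{1}{2}$-H\"older continuity of $u\mapsto u\log u$ on bounded subsets of $(0,+\infty)$ to control the integrand, whereas you use plain continuity and boundedness on the compact interval $[\delta,M]$; this makes no structural difference.
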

\begin{proof}
Let $\ph\in C_b(X)$ have strictly positive infimum. Then $\pst \ph$ belongs to $C_b(X)$ and has positive infimum. Since the mapping $y\longmapsto y\log y$ is $\frac{1}{2}$- H\"older continuous on bounded sets of $(0,+\infty)$, we get
\begin{align}
&\abs{\int_X\pst \ph(x)\log\pst \ph(x)\, \g_t(dy)-m_t(\ph)\log m_t(\ph)}\nonumber\\
&=\abs{\int_X\left(\pst \ph(x)\log\pst \ph(x)-m_t(\ph)\log m_t(\ph) \right)\,\g_t(dy)}\nonumber\\
&\leq C\int_X\abs{\pst \ph(x)-m_t(\ph)}^{\frac{1}{2}}\,\g_t(dy),\nonumber
\end{align}
for some constant $C>0$. By the Dominated Convergence Theorem and \eqref{comp2} the statement follows. 
\end{proof}

\section{Logarithmic Sobolev inequalities}\label{log-sobolev}

In this Section we need all the results proved in the previous sections, so we assume the following hypothesis.
\begin{ip}\label{4} 
Assume that Hypothesis \ref{2bis} holds true and that \eqref{tracebound} and \eqref{ergoU} are satisfied.
\end{ip}
Let $\{\g_t\}_{t\in{\R}}$ be the evolution system of measures for $\{\pst\}_{(s,t)\in\overline{\Delta}}$ given by \eqref{gammat}.
\begin{lem}\label{lemma31} Assume that Hypothesis \ref{4} holds true. Let $(s,t)\in\Delta$ and let $\ph:[s,t]\times X\rightarrow \R$ be such that 
\begin{enumerate}
\item for every $x\in X$ the function $(s,t)\ni r\rightarrow \ph(r,x)$ is differentiable;
\item for every $r\in[s,t]$ $\ph(r,\cdot)\in \mathfrak{B}^2_r(X)$ and there exist $C>0$ and $m\in\Nat$ such that 
$$\dfrac{\partial\ph}{\partial r}(r,x)\leq C(1+\norm{x}_X^m),\ \ r\in(s,t),\ x\in X.$$
\end{enumerate}
Then 
\begin{align} \label{der31}
\frac{d}{dr}\int_X \ph(r,x)\,\g_r(dx)=\int_X\left(L(r)\ph(r,x)+\frac{\de}{\de r}\ph(r,x)\right)\,\g_r(dx),\quad r\in (s,t).
\end{align}
\end{lem}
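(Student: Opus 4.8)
The plan is to prove \eqref{der31} pointwise, by computing the two–sided derivative of $g(r):=\int_X\ph(r,x)\,\g_r(dx)$ at an arbitrary $r_0\in(s,t)$ and separating the two sources of $r$–dependence. For $|\ep|$ small, with $r_0+\ep\in(s,t)$, I would write
\begin{equation*}
g(r_0+\ep)-g(r_0)=\underbrace{\int_X\big(\ph(r_0+\ep,x)-\ph(r_0,x)\big)\,\g_{r_0+\ep}(dx)}_{A_\ep}+\underbrace{\int_X\ph(r_0,x)\,\big(\g_{r_0+\ep}-\g_{r_0}\big)(dx)}_{B_\ep}
\end{equation*}
and show $\ep^{-1}A_\ep\to\int_X\frac{\partial\ph}{\partial r}(r_0,x)\,\g_{r_0}(dx)$ and $\ep^{-1}B_\ep\to\int_X L(r_0)\ph(r_0,x)\,\g_{r_0}(dx)$; adding the two limits gives \eqref{der31} at $r_0$, and $r_0$ is arbitrary.

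In $B_\ep$ the function $\ph(r_0,\cdot)\in\mathfrak{B}^2_{r_0}(X)$ is frozen, so $\ep^{-1}B_\ep$ is an ordinary difference quotient of the scalar function $r\mapsto\int_X\ph(r_0,\cdot)\,d\g_r$, and it suffices to show this function is differentiable at $r_0$ with the claimed derivative. Writing $\ph(r_0,x)=\chi(\ix{x}{h_1},\dots,\ix{x}{h_n})$ with $\chi\in AP^2_b(\R^n)$ and $h_1,\dots,h_n\in D(A(r_0)^\star)$ orthonormal, and $\Pi x:=(\ix{x}{h_1},\dots,\ix{x}{h_n})$, the pushforward of $\g_r$ under $\Pi$ is $\Ng_{0,C(r)}$ with $C(r)_{ij}:=\ix{Q(r,-\infty)h_i}{h_j}$, so $\int_X\ph(r_0,\cdot)\,d\g_r=\int_{\R^n}\chi\,d\Ng_{0,C(r)}$. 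From the splitting $Q(r,-\infty)=Q(r,\sigma)+U(r,\sigma)Q(\sigma,-\infty)U(r,\sigma)^\star$, the $Q$–derivative formula \eqref{derivataqts}, and \eqref{deri2A*}, the matrix–valued map $r\mapsto C(r)$ is differentiable at $r_0$, with $\dot C(r_0)_{ij}=\ix{Q(r_0)h_i}{h_j}+\ix{Q(r_0,-\infty)A(r_0)^\star h_i}{h_j}+\ix{Q(r_0,-\infty)h_i}{A(r_0)^\star h_j}$. Consequently $r\mapsto\int_{\R^n}\chi\,d\Ng_{0,C(r)}$ is differentiable at $r_0$, with derivative $\tfrac12\sum_{ij}\dot C(r_0)_{ij}\int_{\R^n}\partial_{ij}\chi\,d\Ng_{0,C(r_0)}$ (the heat–type identity for a Gaussian convolution with respect to its covariance); and by the Gaussian integration–by–parts formula $\int_X\ix{x}{A(r_0)^\star h_i}\,\partial_i\chi(\Pi x)\,\g_{r_0}(dx)=\sum_j\ix{Q(r_0,-\infty)A(r_0)^\star h_i}{h_j}\int_X\partial_{ij}\chi(\Pi x)\,\g_{r_0}(dx)$ together with the cylindrical form \eqref{ftgeneratore} of $L(r_0)$, this derivative equals $\int_X L(r_0)\ph(r_0,x)\,\g_{r_0}(dx)$. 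The computation is insensitive to the sign of $\ep$.

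For $A_\ep$, the mean value theorem and the polynomial bound of hypothesis (2) give $\big|\ep^{-1}(\ph(r_0+\ep,x)-\ph(r_0,x))\big|\le C(1+\norm{x}_X^m)$ for small $\ep$, with pointwise limit $\frac{\partial\ph}{\partial r}(r_0,x)$. The measures $\g_{r_0+\ep}=\Ng_{0,Q(r_0+\ep,-\infty)}$ converge to $\g_{r_0}$ in $\op_1(X)$ as $\ep\to0$ (continuity in trace norm of $r\mapsto Q(r,-\infty)$, a consequence of \eqref{tracebound} and the arguments of Section \ref{misureinvarianti}), hence weakly, and by \eqref{tracebound} and Fernique's theorem they have moments of every order bounded uniformly in $\ep$. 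A routine uniform–integrability argument (truncation to a large ball, controlled outside it by the uniform moment bound, local convergence of the integrands on the ball) then yields $\ep^{-1}A_\ep=\int_X\ep^{-1}(\ph(r_0+\ep,x)-\ph(r_0,x))\,\g_{r_0+\ep}(dx)\to\int_X\frac{\partial\ph}{\partial r}(r_0,x)\,\g_{r_0}(dx)$, which, combined with the limit of $\ep^{-1}B_\ep$, completes the proof.

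The main obstacle is the $B_\ep$ step, i.e. the forward Kolmogorov identity $\frac{d}{dr}\int_X\psi\,d\g_r=\int_X L(r)\psi\,d\g_r$ for $\psi\in\mathfrak{B}^2_r(X)$. Trying to deduce it directly from the differentiation formulas \eqref{derivata-sx}–\eqref{derivata-dx} for $\pst$ — via $\int_X\psi\,d\g_{r_0+\ep}=\int_X P_{r_0,r_0+\ep}\psi\,d\g_{r_0}$ and differentiation of $t\mapsto P_{r_0,t}\psi$ — fails away from the diagonal $t=r_0$, because the fixed directions $h_i\in D(A(r_0)^\star)$ need not lie in $D(A(t)^\star)$ and no trace–norm bound on $Q(t,r_0)$ is strong enough to give the quotient a limit; indeed $t\mapsto P_{r_0,t}\psi$ need not even be differentiable for $t>r_0$. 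The finite–dimensional reduction to $\mathrm{span}\{h_1,\dots,h_n\}$ circumvents this entirely, since only the scalars $\ix{Q(r,-\infty)h_i}{h_j}$, controlled by \eqref{derivataqts}, and the bounded vectors $A(r_0)^\star h_i$ enter, and because $\ep^{-1}B_\ep$ is a genuine difference quotient of a scalar function (no domination or off–diagonal differentiability needed). The joint limit in $A_\ep$, with integrand and measure both moving — together with the mild continuity of $\frac{\partial\ph}{\partial r}(r_0,\cdot)$ it tacitly uses — is then only a technical matter.
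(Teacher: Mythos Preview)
Your argument is correct, but it takes a considerably longer route than the paper's. The paper exploits the evolution system property to \emph{freeze the measure}: for every $r\in(s,t)$ one has $\int_X\ph(r,\cdot)\,d\g_r=\int_X P_{s,r}\ph(r,\cdot)\,d\g_s$ with the single fixed measure $\g_s$, so the whole problem reduces to differentiating $r\mapsto P_{s,r}\ph(r,\cdot)(x)$ pointwise and passing the derivative under $\int\cdot\,d\g_s$ by Dominated Convergence and Fernique. The two $r$-dependencies are then handled by the obvious splitting, and \eqref{derivata-dx} supplies the $P_{s,r}L(r)\ph(r,\cdot)$ term directly.

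Your last paragraph dismisses this route, but the objection misses the mark. You take the reference point at $r_0$ and write $\int_X\psi\,d\g_{r_0+\ep}=\int_X P_{r_0,r_0+\ep}\psi\,d\g_{r_0}$, which indeed only produces a one-sided quotient and forces you to look at $t\mapsto P_{r_0,t}\psi$ for $t>r_0$. The paper instead fixes $s$ at the left endpoint of the interval, so $P_{s,r_0+\ep}$ is defined for both signs of $\ep$, and the relevant derivative is that of $t\mapsto P_{s,t}\ph(r_0,\cdot)$ \emph{at the single point} $t=r_0$. Formula \eqref{derivata-dx} gives exactly that: it requires only $\ph(r_0,\cdot)\in\mathfrak{B}^2_{r_0}(X)$, which is hypothesis (2), and makes no claim about differentiability for $t\neq r_0$. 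No ``off-diagonal'' regularity of $t\mapsto P_{s,t}\psi$ is needed.

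That said, your approach has its own merit: the $B_\ep$ computation is a direct proof of the forward Kolmogorov identity $\frac{d}{dr}\int_X\psi\,d\g_r=\int_X L(r)\psi\,d\g_r$ for $\psi\in\mathfrak{B}^2_{r}(X)$, obtained by pushing down to the finite-dimensional Gaussian $\Ng_{0,C(r)}$, differentiating the covariance via \eqref{derivataqts} and \eqref{deri2A*}, and closing the loop with Gaussian integration by parts. This is self-contained and does not invoke the theorem behind \eqref{derivata-dx}. The paper's route is shorter precisely because it leans on that theorem (and on the invariance of $\{\g_r\}$) to avoid ever touching the moving measure.
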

\begin{proof}
\eqref{der31} is an immediate consequence of \eqref{derivata-dx}, indeed by the Dominated Convergence Theorem and the Fernique Theorem we get
\begin{align}
\frac{d}{dr}\int_X \ph(r,x)\,\g_r(dx)&=\int_X\frac{d}{dr} P_{s,r}\ph(r,x)\,\g_s(dx)\nonumber \\
&=\int_X \left(P_{s,r}L(r)\ph(r,x)+\left(P_{s,r}\frac{\de}{\de r}\ph(r,\cdot)\right)(x)\right)\,\g_s(dx)\nonumber\\
&=\int_X \left(L(r)\ph(r,x)+\frac{\de}{\de r}\ph(r,x)\right)\,\g_r(dx).\nonumber
\end{align}\end{proof}

Now we can prove one of the main result of this paper.

\begin{thm}\label{LOG}
Assume that Hypothesis \ref{4} holds true. Moreover we assume that $U(t,s)\ac_s\subseteq\ac_t$ for every $(s,t)\in\Delta$ and that there exist $C,\eta>0$ and $\displaystyle{\al\in \left[0,\frac{1}{2}\right)}$ such that
\begin{equation}\label{cond-logsob2}
\norm{U(t,s)_{|_{\ac_s}}}_{\op(\ac_s;\ac_t)}\leq C\, \frac{e^{-\eta(t-s)}}{(t-s)^\al},\qquad\forall\; (s,t)\in\Delta.
\end{equation}
Then, for every $\ph\in C^1_b(X)$, $t\in\R$ and $p\in (1,+\infty)$ we have
\begin{equation}\label{logsobolev}
\int_X|\ph|^p\log\left(|\ph|^p\right)\, d\g_t- m_t(|\ph|^p)\log\left(m_t\left(|\ph|^p\right)\right)\leq\kappa p^2\int_X |\ph|^{p-2}\norm{\Qp{t}\nabla \ph}^2\, \mathbbm{1}_{\{\ph\neq 0\}}d\g_t.
\end{equation}
where 
\begin{equation}\label{logsobcost}
\kappa=C(2\eta)^{2\al-1}\Gamma(1-2\al),
\end{equation}
and $\Gamma$ is the Euler Gamma function.
\end{thm}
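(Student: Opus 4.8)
The plan is to run the classical entropy--semigroup interpolation (a Bakry-type computation in disguise) in the non-autonomous framework, relying on the differentiation formula \eqref{derivata-sx}, the gradient estimate \eqref{stimachemiserve} and the evolution-system-of-measures identity \eqref{invarianzadef}. I would first prove the inequality in the ``energy'' form
\[
\int_X f\log f\,d\g_t-m_t(f)\log m_t(f)\leq \widetilde\kappa\int_X\frac{\norm{\Qp{t}\nabla f}_X^2}{f}\,d\g_t,\qquad \widetilde\kappa:=\frac12\int_{-\infty}^t\norm{U(t,r)_{|_{\ac_r}}}_{\op(\ac_r;\ac_t)}^2\,dr,
\]
for $f\in\mathfrak{B}^2_t(X)$ with $\inf_Xf>0$. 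Fix such an $f$; for $r\leq t$ put $g(r,\cdot):=P_{r,t}f$ and $\Lambda(r):=\int_Xg(r,x)\log g(r,x)\,\g_r(dx)$, so that $\Lambda(t)=\int_Xf\log f\,d\g_t$. By \eqref{st-bohr}, $g(r,\cdot)\in\mathfrak{B}^2_r(X)$ with $\inf_Xg(r,\cdot)\geq\inf_Xf>0$; composing with $y\mapsto y\log y$, which is smooth on the (compact) range of $g(r,\cdot)$, gives $g(r,\cdot)\log g(r,\cdot)\in\mathfrak{B}^2_r(X)$, while $r\mapsto g(r,x)$ is differentiable by \eqref{derivata-sx}, with $\partial_r\big(g\log g\big)=-(1+\log g)L(r)g$ having at most linear growth in $x$. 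Hence Lemma \ref{lemma31} applies to $\ph(r,x)=g(r,x)\log g(r,x)$, and since $L(r)$ carries no zero-order term, the chain rule $L(r)(g\log g)=(1+\log g)L(r)g+\tfrac1{2g}\ix{Q(r)\nabla g}{\nabla g}$ produces, after the cancellation with $\partial_r(g\log g)$,
\[
\Lambda'(r)=\int_X\Big(L(r)(g\log g)+\partial_r(g\log g)\Big)\,d\g_r=\frac12\int_X\frac{\norm{\Qp{r}\nabla P_{r,t}f}_X^2}{P_{r,t}f}\,d\g_r,\qquad r<t.
\]

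To close the estimate I would bound the last integrand. The gradient estimate \eqref{stimachemiserve} gives $\norm{\Qp{r}\nabla P_{r,t}f(x)}_X\leq\norm{U(t,r)_{|_{\ac_r}}}_{\op(\ac_r;\ac_t)}P_{r,t}\big(\norm{\Qp{t}\nabla f(\cdot)}_X\big)(x)$; writing $F:=\norm{\Qp{t}\nabla f}_X$ and using the Cauchy--Schwarz inequality under the Gaussian kernel $\Ng_{U(t,r)x,Q(t,r)}$, namely $\big(P_{r,t}F\big)^2\leq (P_{r,t}f)\,P_{r,t}(F^2/f)$, one gets $\Lambda'(r)\leq\tfrac12\norm{U(t,r)_{|_{\ac_r}}}_{\op(\ac_r;\ac_t)}^2\int_XP_{r,t}(F^2/f)\,d\g_r$, and \eqref{invarianzadef} turns the last integral into $\int_X(F^2/f)\,d\g_t$. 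On the other hand, by convexity of $y\mapsto y\log y$ and $\int_XP_{r,t}f\,d\g_r=\int_Xf\,d\g_t=m_t(f)$, Jensen's inequality yields $\Lambda(r)\geq m_t(f)\log m_t(f)$ for every $r\leq t$. Therefore, using the fundamental theorem of calculus (legitimate by Lemma \ref{lemma31}), for every $s<t$
\[
\int_Xf\log f\,d\g_t-m_t(f)\log m_t(f)\leq\Lambda(t)-\Lambda(s)=\int_s^t\Lambda'(r)\,dr\leq\frac12\Big(\int_s^t\norm{U(t,r)_{|_{\ac_r}}}_{\op(\ac_r;\ac_t)}^2\,dr\Big)\int_X\frac{F^2}{f}\,d\g_t,
\]
and letting $s\to-\infty$ gives the energy form; note that the Jensen bound lets us bypass the computation of $\lim_{s\to-\infty}\Lambda(s)$. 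Finally, by \eqref{cond-logsob2} and the substitution $u=t-r$, $\int_{-\infty}^t\norm{U(t,r)_{|_{\ac_r}}}_{\op(\ac_r;\ac_t)}^2\,dr\leq C^2\int_0^{+\infty}u^{-2\al}e^{-2\eta u}\,du=C^2(2\eta)^{2\al-1}\Gamma(1-2\al)$, finite precisely because $\al\in[0,\tfrac12)$; together with the factor $p^2$ produced in the next step this yields the constant \eqref{logsobcost}.

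It remains to remove the two restrictions. For $\ph\in\mathfrak{B}^2_t(X)$ I would apply the energy inequality to $f_\ep:=(\ph^2+\ep)^{p/2}\in\mathfrak{B}^2_t(X)$, which has $\inf_Xf_\ep\geq\ep^{p/2}>0$ and $\norm{\Qp{t}\nabla f_\ep}_X^2/f_\ep=p^2(\ph^2+\ep)^{\frac p2-2}\ph^2\norm{\Qp{t}\nabla\ph}_X^2$, and let $\ep\to0^+$: the left-hand side converges because $f_\ep\to|\ph|^p$ uniformly and $y\mapsto y\log y$ is uniformly continuous on bounded subsets of $[0,+\infty)$, and the right-hand side converges by dominated convergence (when $1<p<2$ one first disposes of the trivial case in which the right-hand side of \eqref{logsobolev} is $+\infty$), giving \eqref{logsobolev} for $\ph\in\mathfrak{B}^2_t(X)$. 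The general case $\ph\in C^1_b(X)$ then follows from the approximation Proposition \ref{approssimazione} with $h=1$: one approximates $\ph$ first by $\ph_n\in\mathcal{F}_tC^1_b(X)$ and then by $\ph_{n,m}\in\mathcal{E}_t(X)\subseteq\mathfrak{B}^2_t(X)$, with uniform $C^1_b(X)$ bounds and pointwise convergence of functions and of gradients, and passes to the limit in \eqref{logsobolev}.

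The main obstacle is the first step: one must check with care that $(P_{r,t}f)\log(P_{r,t}f)$ genuinely satisfies \emph{all} hypotheses of Lemma \ref{lemma31} on the whole half-line $r\in(-\infty,t)$ --- chiefly its membership in $\mathfrak{B}^2_r(X)$ (which rests on \eqref{st-bohr} and on the stability of Bohr almost periodicity under composition with smooth maps) and the polynomial growth of $\partial_r(P_{r,t}f)=-L(r)P_{r,t}f$ --- so that the non-autonomous differentiation formula may legitimately be invoked along all of $(-\infty,t)$. Once this regularity bookkeeping is done, the remainder is the standard Bakry manipulation, the one genuinely convenient device being the use of Jensen's inequality to avoid analysing $\Lambda$ at $-\infty$.
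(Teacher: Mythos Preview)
Your argument is correct and follows the same skeleton as the paper's: differentiate the entropy $r\mapsto\int_X(P_{r,t}f)\log(P_{r,t}f)\,d\g_r$ via Lemma~\ref{lemma31}, control the derivative with \eqref{stimachemiserve} and Cauchy--Schwarz under $P_{r,t}$, push through \eqref{invarianzadef}, integrate in $r$, then remove the positivity assumption by the $(\ph^2+\ep)^{p/2}$ trick and pass to $C^1_b(X)$ via Proposition~\ref{approssimazione}. The one noteworthy variation is at the endpoint $s\to-\infty$: the paper identifies $\lim_{s\to-\infty}\Lambda(s)=m_t(f)\log m_t(f)$ through Corollary~\ref{prop63}, whereas your Jensen bound $\Lambda(s)\geq m_t(f)\log m_t(f)$, valid for \emph{every} $s\leq t$, bypasses that limit computation entirely and makes the argument independent of \eqref{comp2}. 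A second, cosmetic, difference: your chain-rule computation gives the exact identity $\Lambda'(r)=\tfrac12\int_X\norm{\Qp{r}\nabla g}_X^2/g\,d\g_r$, while the paper reaches \eqref{Gprimo} and then discards its negative term, so your constant $\widetilde\kappa=\tfrac{C^2}{2}(2\eta)^{2\al-1}\Gamma(1-2\al)$ differs from the stated \eqref{logsobcost}; the mismatch is harmless (and traces to a $C$ versus $C^2$ slip in the paper's \eqref{P2} after squaring \eqref{cond-logsob2}).
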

\begin{proof}
Let $p\in (1,+\infty)$ and $r\in\R$. We prove first that \eqref{logsobolev} holds for $\ph\in \mathfrak{B}^2_t(X)$ such that $\displaystyle{\inf_{x\in X}\ph(x)>\ep}$ for some $\ep>0$. 
Let $(s,t)\in\Delta$ and $s\leq r\leq t$;  we define a mapping $\psi:[s,t]\times X\rightarrow\R$ by
\begin{align}\label{mappag}
\psi(r,x):=\left(P_{r,t}\ph^p\right)(x).
\end{align}
By \eqref{st-bohr}, $\psi(r,\cdot)\in\mathfrak{B}^2_r(X)$ for every $r\in [s,t]$  and $$\inf_{r\in [s,t]}\inf_{x\in X}\psi(r,x)>\ep^p$$ and thanks to \eqref{derivata-sx} we get
\begin{align}\label{gdr}
\dfrac{\partial \psi}{\partial r}(r,x)=-L(r)\psi(r,\cdot)(x),\quad r\in [s,t],\ x\in X.
\end{align}
Now we consider the function $G:[s,t]\rightarrow \R$ defined by
\[
G(r):=\int_X \psi(r,x)\log(\psi(r,x))\, \g_r(dx).
\]
Since $\psi(r,\cdot)\in \mathfrak{B}^2_r(X)$ has positive infimum independent of $r\in [s,t]$, since  $\log(\cdot):[\ep,+\infty)\rightarrow \R$ has continuous and bounded derivatives of every order, then $\log(\psi(r,\cdot))\in \mathfrak{B}^2_r(X)$ for every $r\in [s,t]$. So by Lemma \ref{lemma31} we obtain
\begin{align}
G'(r)&=\int_X L(r)\left(\psi(r,\cdot)\log(\psi(r,\cdot))\right)(x)\, \g_r(dx)-\int_X L(r)\psi(r,\cdot)(x)\log(\psi(r,x))\, \g_r(dx)\notag\\
&-\int_X L(r)\psi(r,\cdot)(x)\, \g_r(dx)\label{P1}.
\end{align}
By \eqref{ftgeneratore}, for every $r\in [s,t]$ and $F,\Psi,\Phi\in \mathcal{F}_r C^{2}_b(X)$ with $F$ having positive infimum, by \eqref{ftgeneratore} we have
\begin{align}\label{LeibnizL}
L(r)(\Phi\Psi)&=\Phi L(r)\Psi+\Psi L(r)\Phi+\ix{\Qp{r}\nabla\Phi}{\Qp{r}\nabla\Psi}, \\
\label{logaritmoL}
L(r)(\log F)&=\frac{1}{F}L(r)F-\frac{1}{2}\dfrac{\norm{\Qp{r}\nabla F}}{F^2}.
\end{align}
Using \eqref{LeibnizL} in \eqref{P1} we obtain
\begin{align*}
G'(r)&=-\int_X L(r)\psi(r,\cdot)(x)\log(\psi(r,x))\, \g_r(dx)-\int_X L(r)\psi(r,\cdot)(x)\, \g_r(dx)\notag\\
&+\int_X \log \psi(r,x) L(r)\psi(r,\cdot)(x)\, \g_r(dx)+\int_X \psi(r,x) L(r)\log(\psi(r,\cdot))(x)\, \g_r(dx)\notag\\
&+\int_X \ix{\Qp{r}\nabla \psi(r,x)}{\Qp{r}\nabla \log \psi(r,x)}\, \g_r(dx),
\end{align*}
hence by \eqref{logaritmoL} we have
\begin{align}\label{Gprimo}
G'(r)&=-\frac{1}{2}\int_X\frac{\norm{\Qp{r}\nabla \psi(r,x)}^2}{\psi^2(r,x)}\, \g_r(dx)+\int_X \dfrac{\norm{\Qp{r}\nabla \psi(r,x)}^2}{\psi(r,x)}\, \g_r(dx).
\end{align}
Since the first summand in the right hand side of \eqref{Gprimo} is negative, we get
\begin{align*}
G'(r)&\leq \bigint_X \dfrac{\norm{\Qp{r}\nabla \psi(r,x)}^2}{\psi(r,x)}\, \g_r(dx)=\int_X \dfrac{\norm{\Qp{r}\nabla (P_{r,t}\ph^p)(x)}^2}{(P_{r,t}\ph^p)(x)}\, \g_r(dx).
\end{align*}
Applying \eqref{stimachemiserve}, we obtain
\begin{align}\label{P2}
G'(r)&\leq C\dfrac{e^{-2\eta (t-r)}}{(t-r)^{2\alpha}}\int_X \dfrac{\left( P_{r,t}\norm{\Qp{t}\nabla \ph^p}\right)^2(x)}{(P_{r,t}\ph^p)(x)}\, \g_r(dx)
\end{align}
and by the H\"older inequality we get
\begin{equation}\label{Holder}
P_{r,t}\left(\norm{\Qp{t}\nabla \ph^p}\right)\leq \left(P_{r,t}\left(\dfrac{\norm{\Qp{t}\nabla \ph^p}^2}{\ph^p}\right)\right)^{1/2}\left(P_{r,t}\left(\ph^p\right)\right)^{1/2}.
\end{equation}
Applying \eqref{Holder} to \eqref{P2}, by \eqref{invarianzadef} we get
\begin{align*}
G'(r)&\leq C\dfrac{e^{-2\eta (t-r)}}{(t-r)^{2\alpha}}\int_X P_{r,t}\dfrac{\norm{\Qp{t}\nabla \ph^p}^2}{\ph^p}(x)\, \g_r(dx)\\
&=C\dfrac{e^{-2\eta (t-r)}}{(t-r)^{2\alpha}}\int_X \dfrac{\norm{\Qp{t}\nabla \ph^p(x)}^2}{\ph^p(x)}\, \g_t(dx)\\
&=p^2C\dfrac{e^{-2\eta (t-r)}}{(t-r)^{2\alpha}}\int_X \ph(x)^{p-2}\norm{\Qp{t}\nabla \ph(x)}^2\, \g_t(dx).
\end{align*}
Integrating with respect to $r$ over $[s,t]$ we obtain
{\small \begin{align*}
\int_X \ph^p\log \ph^p\, d\g_t-\int_X \pst \ph^p\log \pst \ph^p\, d\g_s\leq p^2\left(C\int^t_{s}\dfrac{e^{-2\eta (t-r)}}{(t-r)^{2\alpha}}dr\right)\int_X \ph^{p-2}\norm{\Qp{t}\nabla \ph}^2\, d\g_t,
\end{align*}}
Letting $s\rightarrow -\infty$ and using Proposition \ref{prop63} we conclude
\begin{align}\label{P3}
\int_X \ph^p\log \ph^p\, \g_t(dx)\leq m_t(\ph^p)m_t(\log \ph^p)+p^2\kappa\int_X \ph^{p-2}\norm{\Qp{t}\nabla \ph}^2\, \g_t(dx),
\end{align}
where
\begin{equation}
\kappa:=C\int^{+\infty}_0\dfrac{e^{-2\eta r}}{r^{2\alpha}}dr=C(2\eta)^{2\al-1}\Gamma(1-2\al).
\end{equation}

We obtain \eqref{logsobolev} for every $\ph\in \mathfrak{B}^2_t(X)$ applying \eqref{P3} to the standard approximation $\displaystyle{\ph_n=\sqrt{\ph^2+\frac{1}{n^2}}}$ and letting $n\rightarrow +\infty$ in  \eqref{P3}. We stress that, fixed $n\in\Nat$, the function $\varphi_n$ belongs to $\mathfrak{B}^2_t(X)$ since the function $h(x)=\sqrt{x^2+\frac{1}{n}}$ has continuous and bounded derivatives of every order. 
Finally, if $\ph\in C^1_b(X)$ by Proposition \ref{approssimazione} there exist a 2-sequence $\{\varphi_{n,m}\}_{n,m\in\Nat}\subseteq \mathcal{E}_{r}(X)$, $\{\varphi_{n}\}_{n\in\Nat}\subseteq \mathcal{F}_{r}C^1_b(X)$ and $\{c_n\}_{n\in\Nat}\subseteq [0,+\infty)$ such that 
\begin{align}
&\norm{\varphi_{n,m}}_{C^1_b(X)}\leq c_n\norm{\varphi_n}_{C^1_b(X)},\ \  \quad\qquad\qquad\qquad\qquad\qquad\qquad n,m\in\Nat,\label{1e}\\
&\norm{\varphi_{n}}_{C^1_b(X)}\leq\norm{\varphi}_{C^1_b(X)},\ \quad\qquad\qquad\qquad\qquad\qquad\qquad \quad \quad \quad n\in\Nat,\label{2e}\\
&\lim_{m\rightarrow +\infty}\left(\abs{\varphi_{n,m}(x)-\varphi_n(x)}+\vert \nabla\varphi_{n,m}(x)-\nabla\varphi_n(x)\vert\right)=0,\qquad \ n\in\Nat,\ x\in X,\label{3e}\\
&\lim_{n\rightarrow +\infty}\left(\abs{\varphi_{n}(x)-\varphi(x)}+\vert \nabla\varphi_{n}(x)-\nabla\varphi(x)\vert\right)=0,\qquad \quad\quad\ \ \ \ \ x\in X.\label{4e}
\end{align}
Noting that $\mathcal{E}_t(X)\subseteq \mathfrak{B}^2_t(X)$, $\ph_{n,m}$ satisfy \eqref{logsobolev} for every $n,m\in\Nat$. We obtain $\ph$ satisfies \eqref{logsobolev} applying two times the Dominated Convergence Theorem: the first time as $m\rightarrow+\infty$ thanks to \eqref{1e} and \eqref{3e} and the second time as $n\rightarrow+\infty$ thanks to \eqref{2e} and \eqref{4e}.
\end{proof}

\begin{oss}
Let us compare the Logarithmic Sobolev Inequality provided by L. Gross in \cite{gro75, gro93, GRO1} with the one in \eqref{logsobolev}. We fix $r\in\R$ and we consider the operator $L(r)$ defined by \eqref{OU}. We assume that $A(r)$ is the infinitesimal generator of a strongly continuous semigroup $\{T^{(r)}(t)\}_{t\geq 0}$ such that
\begin{equation}\label{stima-semi}
\norm{T^{(r)}(t)}_{\mathcal{L}(X)}\leq e^{-c^{(r)}t},\quad t>0,
\end{equation}
for some positive constant $c^{(r)}$ and 
\[
\int^{+\infty}_{0}\Tr{T^{(r)}(t)B(r)B(r)^\star \left(T^{(r)}(t)\right)^\star }dt<+\infty.
\]
Under these assumptions we consider the Ornstein-Uhlenbeck semigroups $\{R^{(r)}(t)\}_{t\geq 0}$, given by
\[
R^{(r)}(t)\varphi(x)=\int_X\varphi(y)\mathcal{N}_{T^{(r)}(t)x,Q^{(r)}_t}(dy),\qquad t>0,\, x\in X,\, \varphi\in C_b(X),
\] 
where 
\[
Q^{(r)}_t:=\int_0^tT^{(r)}(s)B(r)B(r)^\star T^{(r)}(s)^\star ds.
\]
Setting $\displaystyle{Q^{(r)}_\infty:=\int_0^{+\infty}T^{(r)}(s)B(r)B(r)^\star \left(T^{(r)}(s)\right)^\star ds}$, the Gaussian measure $\mu^{(r)}=\mathcal{N}_{0,Q^{(r)}_\infty}$ is the unique invariant measure of $\{R^{(r)}(t)\}_{t\geq 0}$. Moreover it is well known that $\{R^{(r)}(t)\}_{t\geq 0}$ is uniquely extendable to a strongly continuous semigroup in $L^2(X,\mu^{(r)})$, still denoted by $\{R^{(r)}(t)\}_{t\geq 0}$. Its infinitesimal generator is the closure in $L^2(X,\mu^{(r)})$ of the second order Kolmogorov operator $L(r)$ given by \eqref{OU} defined on $\mathcal{E}_r(X)$ (see \cite{Bignamini2021bis}). We still denote it by $L(r)$.

 We note that $\mu^{(r)}$ is not necessarily the Gaussian measure $\gamma_r$ of our evolution system of measures $\{\gamma_t\}_{t\in\R}$ of $\{\pst\}_{(s,t)\in\overline{\Delta}}$. Moreover by \cite{gro75, gro93, GRO1} the measure $\mu^{(r)}$ verifies the following logarithmic Sobolev inequality
\begin{equation}\label{logsobolevA}
\int_X|\ph|^p\log\left(|\ph|^2\right)\, d\mu^{(r)}- m^{(r)}(|\ph|^2)\log\left(m^{(r)}\left(|\ph|^2\right)\right)\leq -\frac{4}{c^{(r)}}\int_X \ph\, L(r)\ph\,d\mu^{(r)}, 
\end{equation}
where $\varphi\in D(L(r))$, $c^{(r)}$ is the constant in \eqref{stima-semi},
\[
m^{(r)}(\psi):=\int_X\psi(x)\mu^{(r)}(dx),\quad \psi\in L^1(X,\mu^{(r)}).
\]

 We recall that under suitable assumptions on $A(r)$ and $B(r)$, we have
\[
\int_X \ph\, L(r)\ph\,d\mu^{(r)}= -\frac{1}{2}\int_X \norm{\Qp{r}\nabla \ph}^2d\mu.
\]
So \eqref{logsobolev} does not coincide in general with \eqref{logsobolevA}.  In the next theorem we will see that  \eqref{logsobolev} implies a hypercontractivity result for $\{\pst\}_{(s,t)\in\overline{\Delta}}$.
\end{oss}

\begin{lem} Assume that there exists a unique evolution system of measures $\{\g_t\}_{t\in\R}$ for $\pst$ in $\R$. Then for any $p\geq 1$ and $(s,t)\in\overline{\Delta}$ the operator $\pst$ is extendable to a linear bounded operator from $L^p(X,\g_t)$ to $L^p(X,\g_s)$. We still denote it by $\pst$. 
\end{lem}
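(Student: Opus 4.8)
The plan is to establish that $\pst$ is a contraction on the dense subspace $C_b(X)\subseteq L^p(X,\g_t)$ and then invoke the bounded linear extension theorem. First I would fix $(s,t)\in\overline{\Delta}$; the case $s=t$ is trivial, since $P_{r,r}=I$ is the identity on $L^p(X,\g_r)$, so I may assume $(s,t)\in\Delta$. For $\varphi\in C_b(X)$ the function $\pst\varphi$ is Borel measurable (by the Remark following the definition of $\pst$, using \cite[Thm.\,2.6]{cerlun}) and satisfies $|\pst\varphi|\leq\norm{\varphi}_\infty$, hence $\pst\varphi\in L^p(X,\g_s)$ for every $p\geq 1$; so $\varphi\mapsto\pst\varphi$ is a well-defined linear map $C_b(X)\to L^p(X,\g_s)$.

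The key pointwise estimate is Jensen's inequality: since
\[
\pst\varphi(x)=\int_X\varphi(y)\,\Ng_{U(t,s)x,Q(t,s)}(dy)
\]
is the integral of $\varphi$ against a Borel \emph{probability} measure and $\xi\mapsto|\xi|^p$ is convex on $\R$ for $p\geq 1$, we have
\begin{equation*}
|\pst\varphi(x)|^p\leq \pst\bigl(|\varphi|^p\bigr)(x),\qquad x\in X.
\end{equation*}
Integrating against $\g_s$ and using that $|\varphi|^p\in C_b(X)$, so that the defining property \eqref{invarianzadef} of the evolution system of measures applies with test function $|\varphi|^p$, I obtain
\begin{equation*}
\int_X|\pst\varphi|^p\,d\g_s\leq\int_X \pst\bigl(|\varphi|^p\bigr)\,d\g_s=\int_X|\varphi|^p\,d\g_t,
\end{equation*}
i.e.\ $\norm{\pst\varphi}_{L^p(X,\g_s)}\leq\norm{\varphi}_{L^p(X,\g_t)}$ for every $\varphi\in C_b(X)$.

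Finally, by Remark \ref{densità} (applied with $\g=\g_t$) the space $\mathcal{E}_t(X)$, and a fortiori $C_b(X)$, is dense in $L^p(X,\g_t)$ for every $p\geq 1$. Since $\pst\colon C_b(X)\to L^p(X,\g_s)$ is linear and bounded — indeed a contraction — for the norm induced by $L^p(X,\g_t)$ on its domain, it admits a unique continuous linear extension to all of $L^p(X,\g_t)$ with values in $L^p(X,\g_s)$, still a contraction, which we keep denoting by $\pst$.

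I do not expect a genuine obstacle here: the only points needing a little care are that \eqref{invarianzadef} is being invoked with the test function $|\varphi|^p$ rather than $\varphi$ (legitimate because $C_b(X)$ is stable under $\psi\mapsto|\psi|^p$ for $p\geq 1$) and that $C_b(X)$ is dense in $L^p(X,\g_t)$ for \emph{all} $p\geq 1$, which is precisely the content of Remark \ref{densità}. Note that the uniqueness of the evolution system of measures is not actually used in the argument; it only serves to make the notation $\g_s,\g_t$ unambiguous.
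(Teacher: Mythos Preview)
Your proof is correct and follows essentially the same route as the paper's: Jensen's inequality gives the pointwise bound $|\pst\varphi|^p\leq \pst(|\varphi|^p)$, the evolution-system identity \eqref{invarianzadef} turns this into the $L^p$ contraction estimate on $C_b(X)$, and density plus bounded extension finish the job. Your write-up is in fact a bit more careful than the original (you explicitly note the measurability issue and correctly observe that uniqueness of $\{\gamma_t\}$ plays no role in the argument).
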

\begin{proof}
For every $f\in C_b(X)$, $(s,t)\in\overline{\Delta}$ and $x\in X$ we have  
\[
|\pst f(x)|^p \leq \int_X |f( y+U(t,s)x  )|^p d\gamma_t = \pst(|f|^p)(x). 
\]
Integrating over $X$ and recalling \eqref{invarianzadef} we obtain 
$$
\int_X |\pst f |^p d\gamma_t \leq  \int_X \pst(|f|^p)\,d\gamma_t = \int_X |f|^p d\gamma_s. $$
Since $C_b(X)$ is dense in $L^p(X, \gamma )$,  $\pst$ has a unique bounded extension still denoted by $\pst$ from the whole $L^p(X, \gamma_t )$ into $L^p(X, \gamma_s)$, such that 
$\|\pst \|_{{\mathcal L}(L^p(X, \gamma_t ); L^p(X, \gamma_s))} \leq 1$. Taking  $f\equiv1$,  $\pst f \equiv 1 $ so that  
$\|\pst\|_{\mathcal{L}(L^p(X, \gamma_t ); L^p(X, \gamma_s)}=1$. 
\end{proof}

\begin{oss}In general the spaces $L^p(X,\g_t)$ and $L^p(X,\g_s)$ are different if $t \neq s$, and the classical theory of evolution operators in fixed Banach spaces cannot be used.
\end{oss}

\begin{thm}\label{HYPER}
Assume that Hypothesis \ref{4} holds true. Moreover we assume that $U(t,s)\ac_s\subseteq\ac_t$ for every $(s,t)\in\Delta$ and that there exist $C,\eta>0$ and $\displaystyle{\al\in \left[0,\frac{1}{2}\right)}$ such that
\begin{equation*}
\norm{U(t,s)_{|_{\ac_s}}}_{\op(\ac_s;\ac_t)}\leq C\, \frac{e^{-\eta(t-s)}}{(t-s)^\al},\qquad\forall\; (s,t)\in\Delta.
\end{equation*}
Then, for every $(s,t)\in\Delta$, $q\in (1,+\infty)$ and $p\leq (q-1)e^{\frac{t-s}{2\kappa}}+1$ we have
\begin{equation}\label{iper}
\norm{\pst \ph}_{L^p(X,\, \g_s)}\leq \norm{\ph}_{L^q(X,\, \g_t)},\quad \ph\in L^q(X,\, \g_t).
\end{equation}
where $\kappa$ is given by \eqref{logsobcost}.
\end{thm}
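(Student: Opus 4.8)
The plan is to derive the hypercontractivity estimate \eqref{iper} from the logarithmic Sobolev inequality \eqref{logsobolev} by the classical Gross argument, adapted to the non-autonomous setting where the underlying $L^p$ spaces change with the time parameter. First I would reduce to a dense well-behaved class of functions: by the lemma preceding the theorem, $\pst$ is a contraction from $L^q(X,\g_t)$ to $L^q(X,\g_s)$ for every $q\ge 1$, so it suffices to prove \eqref{iper} for $\ph\in C^1_b(X)$ (indeed for $\ph$ bounded with $\inf\ph>0$, the general case following by the approximation $\ph_n=\sqrt{\ph^2+1/n^2}$ and then by density of $C^1_b(X)$ in $L^q$). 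For such $\ph$ I would fix $q>1$ and $t\in\R$, and for $s<r\le t$ introduce the exponent function $p(r)$ interpolating so that $p(t)=q$ and $p(s)=p$, together with
\[
N(r):=\norm{P_{r,t}\ph}_{L^{p(r)}(X,\,\g_r)}=\left(\int_X |P_{r,t}\ph|^{p(r)}\,d\g_r\right)^{1/p(r)}.
\]
The goal is to choose $p(r)$ so that $r\mapsto N(r)$ is monotone, which then yields $N(s)\le N(t)$, i.e. \eqref{iper}.

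Next I would differentiate $\log N(r)$ in $r$. Writing $F(r,x):=P_{r,t}\ph(x)$, which lies in $\mathfrak{B}^2_r(X)$ by \eqref{st-bohr} and satisfies $\partial_r F=-L(r)F$ by \eqref{derivata-sx}, and using Lemma \ref{lemma31} to differentiate under the integral sign together with the invariance of $\{\g_r\}$, a computation gives
\[
\frac{d}{dr}\int_X |F|^{p(r)}\,d\g_r
= p'(r)\int_X |F|^{p(r)}\log|F|\,d\g_r
- p(r)\int_X |F|^{p(r)-1}\,\mathrm{sgn}(F)\,L(r)F\,d\g_r.
\]
For the last term I would integrate by parts against $\g_r$ — using that $\int_X L(r)\psi\,d\g_r=0$ for suitable $\psi$ (a consequence of \eqref{der31} applied to $r$-independent functions, or equivalently of the invariance identity \eqref{invarianzadef}) together with the Leibniz rule \eqref{LeibnizL} — to obtain the Dirichlet-form expression
\[
-p(r)\int_X |F|^{p(r)-1}\mathrm{sgn}(F)\,L(r)F\,d\g_r
= p(r)(p(r)-1)\int_X |F|^{p(r)-2}\norm{\Qp{r}\nabla F}^2\,\mathbbm{1}_{\{F\neq 0\}}\,d\g_r.
\]
Setting $g:=|F|^{p(r)/2}$ and invoking the log-Sobolev inequality \eqref{logsobolev} with exponent $2$ for $g$ (so $\norm{\Qp{r}\nabla g}^2=\tfrac{p(r)^2}{4}|F|^{p(r)-2}\norm{\Qp{r}\nabla F}^2$ on $\{F\neq 0\}$) turns the entropy term into a bound by the same Dirichlet form times $4\kappa$. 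Collecting terms, $\frac{d}{dr}\log N(r)$ equals a constant times
\[
\left(\frac{p'(r)}{p(r)}-\frac{1}{2\kappa}\cdot\frac{p(r)-1}{p(r)}\cdot\frac{p'(r)}{p'(r)}\cdots\right)
\]
— more precisely, the entropy and Dirichlet contributions combine so that $\frac{d}{dr}\log N(r)\le 0$ provided $p'(r)\le \tfrac{1}{2\kappa}\,p(r)(p(r)-1)\cdot\big(\text{the right comparison}\big)$; the clean choice that makes this an equality is the ODE $p'(r)=\tfrac{1}{2\kappa}(p(r)-1)$, whose solution with $p(t)=q$ is $p(r)=(q-1)e^{(r-t)/(2\kappa)}+1$. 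Evaluating at $r=s$ gives $p(s)=(q-1)e^{-(t-s)/(2\kappa)}+1$; since the map is increasing in the target exponent one gets \eqref{iper} for all $p\le (q-1)e^{(t-s)/(2\kappa)}+1$ by relabelling the roles of $p(s),p(t)$ (equivalently, running the argument with $p(t)$ chosen so that $p(s)=p$).

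The main obstacle is rigor in the differentiation-under-the-integral step and the integration by parts: one must ensure that $F(r,\cdot)=P_{r,t}\ph$ and the various nonlinear functions of it (powers $|F|^{p(r)}$, $|F|^{p(r)}\log|F|$) remain in a class to which Lemma \ref{lemma31} and the identity $\int_X L(r)\psi\,d\g_r=0$ apply — this is exactly why the reduction to $\ph$ with strictly positive infimum (so that $F\ge \ep>0$ uniformly on $[s,t]$ and all the composed functions are smooth with the required growth) is essential, and why the final density argument must be handled with care using the contractivity bound $\norm{\pst}_{\op(L^g(X,\g_t);L^g(X,\g_s))}=1$ to pass to the limit in $L^p$ and $L^q$ norms. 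A secondary point is that $p(r)>1$ throughout $[s,t]$, which holds automatically since $q>1$ forces $(q-1)e^{(r-t)/(2\kappa)}>0$; and the optimality claim of Remark \ref{ottimale} is checked by specializing to the autonomous case $U(t,s)=e^{(t-s)A}$, where $\kappa$ reduces to the classical Gross constant.
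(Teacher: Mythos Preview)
Your overall strategy---differentiate $r\mapsto \|P_{r,t}\ph\|_{L^{p(r)}(X,\g_r)}$, feed in the log-Sobolev inequality \eqref{logsobolev}, and choose $p(r)$ to force monotonicity---is exactly the paper's. But two of your steps do not go through as written.

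The identity $\int_X L(r)\psi\,d\g_r=0$ that you invoke is \emph{false} in the non-autonomous setting: applying \eqref{der31} to an $r$-independent $\psi$ gives $\int_X L(r)\psi\,d\g_r=\frac{d}{dr}\int_X\psi\,d\g_r$, which is generically nonzero because the measures $\g_r$ genuinely vary with $r$. Consequently your displayed formula for $\frac{d}{dr}\int_X|F|^{p(r)}d\g_r$ is missing the term $\int_X L(r)|F|^{p(r)}d\g_r$ coming from Lemma~\ref{lemma31}, and your subsequent ``integration by parts'' is unjustified. The paper avoids this entirely: it keeps all three terms produced by Lemma~\ref{lemma31} (see \eqref{I1}) and then uses the pointwise chain-rule identity \eqref{PotenzaL}, namely $L(r)\psi^{p}=\tfrac12 p(p-1)\psi^{p-2}\|Q(r)^{1/2}\nabla\psi\|^2+p\,\psi^{p-1}L(r)\psi$, to combine the two drift terms directly into the Dirichlet form $\tfrac12 p(p-1)\int\psi^{p-2}\|Q(r)^{1/2}\nabla\psi\|^2d\g_r$ --- note the factor $\tfrac12$, which your formula also lacks.

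Second, your exponent function and monotonicity direction are inconsistent. Since $p'(r)$ multiplies the entropy term and \eqref{logsobolev} bounds that entropy from \emph{above}, one needs $p'(r)<0$ to obtain the useful inequality $(\log H(r))'\ge 0$; the resulting ODE is $2\kappa p'(r)+p(r)-1=0$, giving $p(r)=(q-1)e^{(t-r)/(2\kappa)}+1$, which is decreasing with $p(s)=(q-1)e^{(t-s)/(2\kappa)}+1>q$, and then $H(s)\le H(t)$ is precisely \eqref{iper}. Your choice $p(r)=(q-1)e^{(r-t)/(2\kappa)}+1$ has the wrong sign, and your simultaneous claims ``$N(s)\le N(t)$'' and ``$(\log N)'\le 0$'' contradict each other; the ``relabelling'' does not repair this. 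Finally, reducing to $\ph\in C^1_b(X)$ is not enough to invoke \eqref{st-bohr} and Lemma~\ref{lemma31}: the paper works first with $\ph\in\mathfrak{B}^2_t(X)$ having strictly positive infimum (so that $P_{r,t}\ph\in\mathfrak{B}^2_r(X)$ and all powers thereof remain in that class), and only afterwards passes to $L^q(X,\g_t)$ via the density of $\mathcal{E}_t(X)\subset\mathfrak{B}^2_t(X)$.
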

\begin{proof}
Let $(s,t)\in\Delta$ and let $r\in [s,t]$. We prove first that \eqref{iper} holds for $\ph\in \mathfrak{B}^2_t(X)$ such that $\displaystyle{\inf_{x\in X}\ph(x)>\ep}$ for some $\ep>0$. 
We consider the mapping $\psi:[s,t]\times X\rightarrow\R$ given by
\begin{align}\label{mappag2}
\psi(r,x):=\left(P_{r,t}\ph\right)(x).
\end{align}
By Lemma \ref{st-bohr}, $\psi(r,\cdot)\in\mathfrak{B}^2_{r}(X)$ for every $r\in [s,t]$  and $$\inf_{r\in [s,t]}\inf_{x\in X}\psi(r,x)>\ep$$ and thanks to \eqref{derivata-sx} we get
\begin{align}
\dfrac{\partial \psi}{\partial r}(r,x)=-L(r)\psi(r,\cdot)(x),\quad r\in [s,t],\ x\in X.
\end{align}
 Now we define the functions $G:[s,t]\rightarrow \R$ and $H:[s,t]\rightarrow \R$ by
\begin{equation}
G(r):=\int_X \psi(r,x)^{p(r)}\, \g_r(dx),\qquad H(r)=G(r)^{\frac{1}{p(r)}},
\end{equation}
where
\begin{equation}\label{pr}
p(r):=(q-1)e^{(2\kappa)^{-1}(t-r)}+1.
\end{equation}
By Lemma \ref{lemma31} we have 
\begin{align}
G'(r)&=\int_X \psi(r,x)^{p(r)}\log \psi(r,x)\,p'(r)\, \g_r(dx)-\int_X \psi(r,x)^{p(r)-1}p(r)L(r)\psi(r,\cdot)(x)\, \g_r(dx)\nonumber\\
&+\int_X L(r)\psi(r,\cdot)^{p(r)}(x)\, \g_r(dx).\label{I1}
\end{align}
For every $r\in [s,t]$, $p\geq 1$ and $\Phi\in\mathcal{F}_{r}C_b^2(X)$, by \eqref{ftgeneratore} we have
\begin{equation}\label{PotenzaL}
L(r)\Phi^p=\frac{1}{2}p(p-1)\Phi^{p-2}\norm{\Qp{r}\nabla \Phi}^2+p\Phi^{p-1}L(r)\Phi,
\end{equation}
and applying \eqref{PotenzaL} to \eqref{I1}, we get
\begin{align}\label{I2}
G'(r)&=\int_X \psi(r,x)^{p(r)}\log \psi(r,x)\,p'(r)\, \g_r(dx)\notag\\
&+\frac{1}{2}p(r)(p(r)-1)\int_X\psi(r,x)^{p-2}\norm{\Qp{r}\nabla \psi(r,x)}^2\, \g_r(dx).
\end{align}
By \eqref{I2} we obtain
\begin{align*}
(\log H(r))'&=\frac{1}{p(r)G(r)}G'(r)-\frac{p'(r)}{p(r)^2}\log G(r)\\
&=\frac{p'(r)}{p(r)\int_X\psi(r,x)^{p(r)}\, \g_r(dx)}\int_X \psi(r,x)^{p(r)}\log \psi(r,x)\, \g_r(dx)\notag\\
&+\frac{p(r)-1}{2\int_X\psi(r,x)^{p(r)}\, \g_r(dx)}\int_X\psi(r,x)^{p-2}\norm{\Qp{r}\nabla \psi(r,x)}^2\, \g_r(dx)\\
&-\frac{p'(r)}{p(r)^2}\log\left(\int_X\psi(r,x)^{p(r)}\, \g_r(dx)\right)\\
&=\frac{p'(r)}{p(r)^2G(r)}\left[\int_X \psi(r,x)^{p(r)}\log \psi(r,x)^{p(r)}\, \g_r(dx)-m_r(\psi(r,\cdot)^p)\log m_r(\psi(r,\cdot)^p)\right]\\
&+\frac{p(r)-1}{2G(r)}\int_X\psi(r,x)^{p-2}\norm{\Qp{r}\nabla \psi(r,x)}^2\, \g_r(dx).
\end{align*}
Taking into account that $p'(r)< 0$, by \eqref{logsobolev} we get
\begin{equation}
(\log H(r))'\geq \frac{1}{2G(r)}\left(2p'(r)\kappa+p(r)-1)\right)\int_X\psi(r,x)^{p-2}\norm{\Qp{r}\nabla \psi(r,x)}^2\, \g_r(dx)=0.
\end{equation}
Since $p(r)$ is given by \eqref{pr}, $\log H(r)$ is a non decreasing function so also $H(r)$ is a non decreasing function. Hence \eqref{iper} holds true for every $\ph\in\mathfrak{B}^2_t(X)$ with positive infimum.  

We obtain \eqref{iper} for every $\ph\in \mathfrak{B}^2_t(X)$ applying \eqref{iper} to the standard approximation $\displaystyle{\ph_n=\sqrt{\ph^2+\frac{1}{n^2}}}$ and letting $n\rightarrow +\infty$ in  \eqref{iper}. We stress that, fixed $n\in\Nat$, the function $\varphi_n$ belongs to $\mathfrak{B}^2_t(X)$ since the function $h(x)=\sqrt{x^2+\frac{1}{n}}$ has continuous and bounded derivatives of every order. 
Since $\mathcal{E}_t(X)\subset\mathfrak{B}^2_t(X)$ \eqref{iper} holds for all $\ph\in L^q(X,\g_t)$ by Remark \ref{densità}. 
\end{proof}

\section{Examples}\label{Examples}
In this section we give three genuinely non autonomous examples.

\subsection{A non autonomous parabolic problem}
Let $d\in\Nat$ and let $\os\subseteq\R^d$ be a bounded open set with smooth boundary. We consider the evolution operator $\{U(t,s)\}_{(s,t)\in\overline{\Delta}}$ in $X:=L^2(\os)$ associated to an evolution equation of parabolic type,

\begin{align}\label{parabolico}
\begin{cases}
u_t(t,x)=\oa(t)u(t,\cdot)(x),\ \ (t,x)\in(s,+\infty)\times\os,\\
\mathcal{B}(t)u(t,\cdot)(x)=0,\ \ (t,x)\in(s,+\infty)\times\de\os.\\
\end{cases}
\end{align}
The differential operators $\oa(r)$ are defined by
\begin{equation}\label{operatoreat}
\oa(r)\ph(x)=\sum_{i,j=1}^d D_i\left(a_{ij}(r,x)D_{i}\ph(x)\right)+\sum_{i=1}^d a_{i}(r,x)D_{i}\ph(x)+a_0(r,x)\ph(x),\quad r\in \R,\ x\in\os
\end{equation}
and the family of the boundary operators $\{\mathcal{B}(r)\}_{r\in \R}$ is either of Dirichlet or Robin type, namely
\begin{align}\label{boundary}
\mathcal{B}(r)u=
\begin{cases}
\begin{aligned}
&u \ \ \quad \quad\quad \quad \quad\quad\quad \quad \quad \quad\quad \quad \quad \quad \mbox{(Dirichlet)},  \\[1ex]
&\displaystyle{\sum_{i,j=1}^da_{ij}(x,r)D_i u\,\nu_j}+b_0(x,r)u\quad \quad \mbox{(Robin)},
\end{aligned}
\end{cases}
\end{align}
 where $\nu=(\nu_1,...,\nu_d)$ is the unit outer normal vector at the boundary of $\Omega$. 
 
 We make the following assumptions.

\begin{ip}\label{A}
We assume $a_{ij}=a_{ji}$ and for some $\displaystyle{\rho\in\left(\frac{1}{2},1\right)}$, $a_{ij}\in C_b^{\rho,2}\left(\R\times\overline{\os}\right)$, $b_0\in C_b^{\rho,1}\left(\R\times\overline{\os}\right)$, $a_{i},a_0\in C_b^{\rho,0}\left(\R\times\overline{\os}\right)$. Moreover, we assume that there exist $\nu, \omega, \be_0>0$ such that 
\begin{align}
&\sum_{i,j=1}^da_{ij}(r,x)\xi_i\xi_j\geq \nu\abs{\xi}^2,\ \ r\in\R,\ x\in\os,\  \xi\in\R^d,\\
&\sup_{(r,x)\in\R\times\os}a_0(r,x)\leq -\om,\\
&\inf_{(r,x)\in\R\times\os}b_0(r,x)\geq\be_0, \\
& \delta_0-\omega<0,
\end{align}
where $\displaystyle{\delta_0=\frac{1}{\nu}\left(\sum_{i=1}^d\norm{a_i}^2_\infty\right)^{\frac{1}{2}}}$.
\end{ip}
For every $r\in\R$ we denote by $A(r)$ the realization in $L^2(\os)$ of $\oa(r)$ with one of the boundary conditions \ref{boundary}. In \cite[Ex.\,2.8, Ex.\,2.9, Rmk 3.19 and Ex. 4.9]{Schnaubelt2004} it is proven that  the family $\{A(r)\}_{r\in\R}$ satisfies the assumptions of \cite{MR945820, MR934508},  so there exists an evolution operator  $\{U(t,s)\}_{s\leq t}$ on $X$ such that Hypothesis \ref{2bis} holds true.

\begin{prop}\label{zeta=omega}
 Under Hypothesis \ref{A}, \eqref{omega} holds with $\zeta=\omega-\delta_0$.
\end{prop}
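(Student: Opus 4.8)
The plan is to derive \eqref{omega} from the classical $L^2$ energy estimate for the parabolic problem \eqref{parabolico}. Fix $(s,t)\in\overline{\Delta}$ and $x\in X=L^2(\os)$, and set $u(r):=U(r,s)x$ for $r\in[s,+\infty)$. Since $\{U(r,s)\}$ is the Acquistapace--Terreni evolution operator associated with $\{A(\sigma)\}_{\sigma\in\R}$, for every $r>s$ we have $u(r)\in D(A(r))\subseteq H^2(\os)$, the map $r\mapsto u(r)$ is differentiable on $(s,+\infty)$ with $u'(r)=A(r)u(r)$, and $r\mapsto u(r)$ is strongly continuous on $[s,+\infty)$ with $u(s)=x$. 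In particular $g(r):=\norm{u(r)}_X^2$ is continuous on $[s,+\infty)$ and differentiable on $(s,+\infty)$, so it suffices to bound $g'$. Since $\delta_0-\om<0$ by Hypothesis \ref{A}, this will give \eqref{omega} with $M=1$ and $\zeta=\om-\delta_0>0$.

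The core of the argument is the energy identity, valid for $r>s$ because $u(r)\in D(A(r))$,
\[
\frac{1}{2}g'(r)=\ix{A(r)u(r)}{u(r)}=\int_{\os}\bigl(\oa(r)u(r)\bigr)\,u(r)\,dx .
\]
Integrating by parts (legitimate since $u(r)\in H^2(\os)$ and $a_{ij}\in C_b^{\rho,2}$), the divergence part of \eqref{operatoreat} produces $-\int_{\os}\sum_{i,j}a_{ij}D_iu\,D_ju\,dx$ together with a boundary integral; in the Dirichlet case of \eqref{boundary} the latter vanishes, while in the Robin case, using $a_{ij}=a_{ji}$ and the boundary condition $\sum_{i,j}a_{ij}D_iu\,\nu_j=-b_0u$ on $\de\os$, it equals $-\int_{\de\os}b_0\,u^2\,d\sigma\leq-\be_0\int_{\de\os}u^2\,d\sigma\leq 0$. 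By uniform ellipticity the divergence part is therefore $\leq-\nu\norm{\nabla u(r)}_{L^2}^2$; the zero-order term is $\leq-\om\norm{u(r)}_X^2$ since $a_0\leq-\om$; and the first-order term is estimated through $\bigl|\sum_ia_iD_iu\bigr|\leq\bigl(\sum_i\norm{a_i}_\infty^2\bigr)^{1/2}|\nabla u|$, Cauchy--Schwarz and Young's inequality by $\nu\norm{\nabla u(r)}_{L^2}^2+\delta_0\norm{u(r)}_X^2$, the Young parameter being chosen so as to absorb the term $-\nu\norm{\nabla u(r)}_{L^2}^2$. Adding up, the gradient contributions cancel and we obtain
\[
g'(r)\leq 2(\delta_0-\om)\,g(r),\qquad r>s ,
\]
with all constants uniform in $r$ and $s$ thanks to Hypothesis \ref{A}.

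Finally, integrating this differential inequality on $[s+\ep,r]$, letting $\ep\to0^+$ and using $g(s+\ep)\to g(s)=\norm{x}_X^2$ (strong continuity of $u$ at $s$), Gronwall's lemma gives $\norm{U(r,s)x}_X\leq e^{(\delta_0-\om)(r-s)}\norm{x}_X$ for every $x\in X$ and $r\geq s$, which is exactly \eqref{omega} with $M=1$ and $\zeta=\om-\delta_0$. The points needing care are the justification of the energy identity $\tfrac12 g'(r)=\ix{A(r)u(r)}{u(r)}$ in the absence of a semigroup structure (one works on $(s,+\infty)$, where the Acquistapace--Terreni regularity is available, and then passes to the limit at $s$), and the correct sign of the Robin boundary term, which is where the hypotheses $b_0\geq\be_0>0$ and $a_{ij}=a_{ji}$ enter.
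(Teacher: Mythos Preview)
Your approach is different from the paper's and more self-contained: the paper does not run an energy estimate at all, but shifts the operators, setting $\widetilde A(r)=A(r)-\omega\,\id_X$ so that $\widetilde U(t,s)=e^{\omega(t-s)}U(t,s)$, and then quotes \cite[Thm.~5.1]{MR1780769} (Daners' heat-kernel bounds) to get $\|\widetilde U(t,s)\|_{\op(X)}\le e^{\delta_0(t-s)}$, whence $\|U(t,s)\|_{\op(X)}\le e^{-(\omega-\delta_0)(t-s)}$. Your direct $L^2$ energy argument is a legitimate and more elementary route, and your treatment of the boundary terms and of the Acquistapace--Terreni regularity on $(s,\infty)$ is fine.

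There is, however, a computational slip in the Young step that affects the constant. With $S:=\bigl(\sum_i\|a_i\|_\infty^2\bigr)^{1/2}=\nu\delta_0$, after the pointwise Cauchy--Schwarz one has
\[
\Bigl|\int_\os \textstyle\sum_i a_i\,D_iu\cdot u\Bigr|\le S\int_\os|\nabla u|\,|u|\le \nu\|\nabla u\|_{L^2}^2+\frac{S^2}{4\nu}\|u\|_X^2=\nu\|\nabla u\|_{L^2}^2+\frac{\nu\delta_0^2}{4}\|u\|_X^2,
\]
the parameter being forced by the requirement that the gradient term be exactly $\nu\|\nabla u\|^2$. The zero-order coefficient is therefore $\frac{\nu\delta_0^2}{4}$, not $\delta_0$, and your differential inequality reads $g'(r)\le 2\bigl(\tfrac{\nu\delta_0^2}{4}-\omega\bigr)g(r)$. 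Thus your argument yields \eqref{omega} with $\zeta=\omega-\tfrac{\nu\delta_0^2}{4}$ rather than the stated $\zeta=\omega-\delta_0$; in particular it does not prove the proposition as formulated, and positivity of your $\zeta$ would require $\omega>\tfrac{\nu\delta_0^2}{4}$, which is not the condition $\omega>\delta_0$ of Hypothesis~\ref{A}. If you want the exact constant $\delta_0$ you need the Daners bound (or a sharper estimate of the drift term than the plain Young inequality); otherwise state and prove the proposition with your constant.
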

\begin{proof}
We consider the family of operators $\left\{-\widetilde{A}(r)\right\}_{r\in\R}$ defined by
\begin{equation}
-\widetilde{A}(r)=-A(r)+\om\id_X,\ \ r\in\R.
\end{equation} 
Since $\left\{A(r)\right\}_{r\in\R}$ verifies Hypothesis \ref{2bis} then $\left\{\widetilde{A}(r)\right\}_{r\in\R}$ verifies Hypothesis \ref{2bis} and it is associated to the evolution operator $\{\widetilde{U}(t,s)\}_{s\leq t}$, given by
\begin{equation}\label{utstilde}
\widetilde{U}(t,s)=e^{\om(t-s)}\uts,\quad s\leq t.
\end{equation}
Moreover $\left\{-\widetilde{A}(r)\right\}_{r\in\R}$ satisfies all hypotheses of \cite{MR1780769} and  by \cite[Thm\, 5.1]{MR1780769} we have
\begin{align}
\norm{\widetilde{U}(t,s)}_{\op(X)}\leq e^{\delta_0(t-s)},\quad s\leq t,
\end{align}
and
\begin{align}\label{santograal}
\norm{U(t,s)}_{\op(X)}=e^{-\omega(t-s)}\norm{\widetilde{U}(t,s)}_{\op(X)}\leq e^{-(\omega-\delta_0)(t-s)},\quad s\leq t.
\end{align}
\end{proof}

\begin{prop}\label{traccia-finita}
Assume that Hypothesis \ref{A} holds true. Let $\{B(r)\}_{r\in\R}$ be a family of operators such that
\begin{enumerate}
\item for every $r\in\R$, $B(r)\in\mathcal{L}(L^2(\os),L^q(\os))$ for some $q\in [2,+\infty)\cap (d,+\infty)$ and
 \[
\sup_{r\in\R}\norm{B(r)}_{\op(L^2(\os);L^q(\os))}<+\infty;
\]  
\item for every $\ph\in L^2(\os)$ the mapping $r\in\R\longmapsto B(r)\ph\in L^q(\os)$ is continuous.
\end{enumerate} 
Then
\begin{equation}\label{stimafinale}
\Tr{Q(t,s)}\leq C^2\abs{\os}M_{q'}^{\frac{2}{q'}}\int_s^t\frac{e^{-\om(t-r)}}{(t-r)^{\frac{d}{q}}}\,dr.
\end{equation}
 and the operator $Q(t,s)$  has finite trace for all $s<t$. Moreover there exists a unique evolution system of measures $\{\gamma_t\}_{t\in \R}$ for $\pst$ in $\R$ given by \eqref{gammat}.
\end{prop}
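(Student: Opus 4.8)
The plan is to evaluate the trace of $Q(t,s)$ fibrewise in time and then exploit the $L^2$–$L^q$ smoothing of $\{U(t,s)\}$. Since $Q(t,s)=\int_s^tU(t,r)B(r)B(r)^\star U(t,r)^\star\,dr$ is a Bochner integral of non-negative operators (the integrand being measurable by the strong continuity of $U(\cdot,\cdot)$ and assumption (2) on $B$), for any orthonormal basis $\{e_k\}_{k\in\Nat}$ of $X=L^2(\os)$ Tonelli's theorem gives
\begin{equation*}
\Tr{Q(t,s)}=\int_s^t\sum_{k\in\Nat}\norm{B(r)^\star U(t,r)^\star e_k}^2_X\,dr=\int_s^t\norm{U(t,r)B(r)}_{\mathrm{HS}}^2\,dr,
\end{equation*}
$\norm{\cdot}_{\mathrm{HS}}$ denoting the Hilbert–Schmidt norm on $\op(X)$. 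Everything then reduces to bounding this integrand for fixed $r\in(s,t)$, integrating, and checking that the resulting time integral stays bounded as $s\to-\infty$; the last point is exactly where the hypothesis $q>d$ enters.

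To estimate $\norm{U(t,r)B(r)}_{\mathrm{HS}}^2$, I would use that $A(r)$ is uniformly elliptic with the coefficient regularity of Hypothesis \ref{A}, so $U(t,r)$ has a Green function $g(t,r,\cdot,\cdot)$ satisfying the Gaussian upper bounds available for this class of non-autonomous parabolic problems (see \cite{MR1780769,Schnaubelt2004}), which in particular yield an ultracontractivity estimate $\norm{U(t,r)}_{\op(L^q(\os);L^\infty(\os))}\le M_{q'}^{1/q'}\,(t-r)^{-\frac{d}{2q}}e^{-\omega(t-r)}$, the exponential rate being forced by $\sup a_0\le-\omega$. Hence $\sup_{x\in\os}\norm{g(t,r,x,\cdot)}_{L^{q'}(\os)}\le\norm{U(t,r)}_{\op(L^q;L^\infty)}$, and for $f\in L^2(\os)$, writing $(U(t,r)B(r)f)(x)=\int_\os g(t,r,x,y)(B(r)f)(y)\,dy$ and applying Hölder's inequality together with assumption (1),
\begin{equation*}
\abs{(U(t,r)B(r)f)(x)}\le\norm{g(t,r,x,\cdot)}_{L^{q'}(\os)}\,\norm{B(r)}_{\op(L^2(\os);L^q(\os))}\,\norm{f}_{L^2(\os)}.
\end{equation*}
By the Riesz representation theorem (with a routine measurability argument using separability of $L^2(\os)$), $U(t,r)B(r)$ has an $L^2$-kernel $k_r$ with $\norm{k_r(x,\cdot)}_{L^2(\os)}\le\norm{g(t,r,x,\cdot)}_{L^{q'}}\norm{B(r)}_{\op(L^2;L^q)}$, so that
\begin{equation*}
\norm{U(t,r)B(r)}_{\mathrm{HS}}^2=\int_\os\norm{k_r(x,\cdot)}_{L^2(\os)}^2\,dx\le\abs{\os}\,\norm{B(r)}_{\op(L^2;L^q)}^2\,\norm{U(t,r)}_{\op(L^q;L^\infty)}^2.
\end{equation*}
Inserting the ultracontractivity bound, using $e^{-2\omega(t-r)}\le e^{-\omega(t-r)}$, and setting $C:=\sup_{r\in\R}\norm{B(r)}_{\op(L^2(\os);L^q(\os))}<+\infty$ gives $\norm{U(t,r)B(r)}_{\mathrm{HS}}^2\le C^2\abs{\os}M_{q'}^{2/q'}(t-r)^{-d/q}e^{-\omega(t-r)}$, which upon integration over $(s,t)$ is precisely \eqref{stimafinale}.

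It remains to derive finiteness and the existence/uniqueness of the evolution system of measures. With the change of variable $u=t-r$,
\begin{equation*}
\int_s^t\frac{e^{-\omega(t-r)}}{(t-r)^{d/q}}\,dr\le\int_0^{+\infty}u^{-d/q}e^{-\omega u}\,du=\omega^{\frac{d}{q}-1}\,\Gamma\!\left(1-\tfrac{d}{q}\right)<+\infty,
\end{equation*}
the integral converging because $0<d/q<1$ (here $q>d$ is essential) and $\omega>0$. Thus $Q(t,s)$ has finite trace for every $s<t$ and, moreover, $\sup_{s<t}\Tr{Q(t,s)}<+\infty$, i.e.\ \eqref{tracebound} holds. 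By Proposition \ref{zeta=omega}, \eqref{omega} holds with $\zeta=\omega-\delta_0>0$ (Hypothesis \ref{A} gives $\delta_0-\omega<0$), so $\norm{U(t,s)x}_X\le e^{-(\omega-\delta_0)(t-s)}\norm{x}_X\to0$ as $s\to-\infty$, which is \eqref{ergoU}. Theorem \ref{esistenza} then provides the evolution system of measures $\{\gamma_t\}_{t\in\R}$ given by \eqref{gammat}, and Theorem \ref{unicità} together with Remark \ref{omeganegativo} (applicable since $\zeta>0$) yields its uniqueness.

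The only genuinely non-routine ingredient is the ultracontractive/Gaussian kernel estimate for the non-autonomous operator $\{U(t,r)\}$ with the stated exponential decay; once this is quoted from the parabolic literature, the remainder is an assembly of Tonelli's theorem, Hölder's inequality, the Riesz representation theorem, and the evaluation of a Gamma integral, so I expect the kernel bound (and its correct exponent $\tfrac{d}{2q}$, which is what makes the time integral finite precisely for $q>d$) to be the main point to get right.
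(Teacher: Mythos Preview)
Your proof is correct and follows essentially the same route as the paper: both compute $\Tr{Q(t,s)}=\int_s^t\|B(r)^\star U(t,r)^\star\|_{\mathrm{HS}}^2\,dr$, invoke the Gaussian kernel bound for $U(t,r)$ from \cite{MR1780769} to control $\|k(x,\cdot,t,r)\|_{L^{q'}}$, pair it with the $L^2$--$L^q$ bound on $B(r)$ (equivalently $B(r)^\star:L^{q'}\to L^2$), and integrate. The only cosmetic differences are that you package the kernel estimate as an ultracontractivity bound and obtain the integral kernel of $U(t,r)B(r)$ abstractly via Riesz, whereas the paper identifies it explicitly as $B(r)^\star k(x,\cdot,t,r)$; your concluding appeal to Proposition~\ref{zeta=omega}, Theorem~\ref{esistenza}, Theorem~\ref{unicità} and Remark~\ref{omeganegativo} matches the paper exactly.
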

\begin{proof}
We adapt to our setting the arguments of \cite[Lemma\,4.3]{cerlun}.

By \cite{MR1780769} $\widetilde{U}(t,s)$ defined in \eqref{utstilde} may be exended to the whole $L^1(\os)$, and the extension (still denoted by $\widetilde{U}(t,s)$) belongs to $\op\left(L^1(\os); L^\infty(\os)\right)$. Moreover $\widetilde{U}(t,s)$ is represented by
\begin{equation}
\widetilde{U}(t,s)\ph(x)=\int_\os k(x,y,t,r)\ph(y)\,dy,\quad \ph\in L^1(\os),\ s<t,
\end{equation}
where $k(\cdot,\cdot,t,s)$ belongs to $L^\infty\left(\os\times\os\right)$ and by \cite[thm.\,6.1]{MR1780769} there exist $M, m>0$ such that
\begin{equation}\label{stimakergauss1}
\abs{k(x,y,t,r)}\leq\frac{M}{(t-s)^{\frac{d}{2}}}\,e^{-\frac{\abs{x-y}^2}{m(t-s)}},\quad x,y\in\os,\ s<t,
\end{equation}
and recalling \eqref{utstilde}, we get
\begin{align}
\label{kergauss}
&U(t,s)\ph(x)=e^{-\om (t-s)}\int_\os k(x,y,t,r)\ph(y)\,dy,\quad \ph\in L^1(\os),\ \ s<t,\ x\in\os.
\end{align} 
Let $\{e_k\}_{k\in\Nat}$ be a Hilbert basis of $X$. Then
\begin{equation}
\Tr{Q(t,s)}=\int_s^t\sum_{k=1}^\infty\norm{B(r)^\star U(t,r)^\star e_k}^2_{L^2(\os)}\,dr,\quad s<t.
\end{equation}
By the representation formula \eqref{kergauss}, we get
\begin{equation}
(U(t,r)^\star e_k)(y)=e^{-\om (t-r)}\int_\os k(x,y,t,r)e_k(x)\,dx,\quad\mbox{a.e.}\ y\in\os,
\end{equation}
and then 
\begin{equation}
(B(r)^\star U(t,r)^\star e_k)(y)=e^{-\om (t-r)}\int_\os \left(B(r)^\star k(x,\cdot,t,r)\right)(y)\,e_k(x)\,dx,\quad\mbox{a.e.}\ y\in\os.
\end{equation}
We obtain
\begin{align}\label{contitraccia}
\Tr{Q(t,s)}&=\int_s^t e^{-\om (t-r)}\int_\os\sum_{k=1}^\infty\left(\int_\os \left(B(r)^\star k(x,\cdot,t,r)\right)(y)\,e_k(x)\,dx\right)^2dydr \nonumber\\
&=\int_s^t e^{-\om (t-r)}\int_\os\int_\os \left(\left(B(r)^\star k(x,\cdot,t,r)\right)(y)\right)^2\,dxdydr\nonumber\\
&=\int_s^t e^{-\om (t-r)}\int_\os\int_\os \left(\left(B(r)^\star k(x,\cdot,t,r)\right)(y)\right)^2\,dydxdr.
\end{align}
Since $B(r)\in\mathcal{L}(L^2(\os);L^q(\os))$ then $B(r)^\star\in\mathcal{L}(L^{q'}(\os);L^2(\os))$ and  there exists $C>0$ such that for every $x\in\os$ we have 
\begin{align}
\norm{B(r)^\star k(x,\cdot,t,r)}_{L^2(\os)}&\leq \norm{B(r)^\star}_{\op(L^{q'}(\os);L^2(\os))}\norm{k(x,\cdot,t,r)}_{L^{q'}(\os)}\nonumber\\
&\leq C \norm{k(x,\cdot,t,r)}_{L^{q'}(\os)}.
\end{align}
By \eqref{stimakergauss1}, for every $p>1$ there exists $M_p>0$ independent of $x$ such that
\begin{align}
\norm{k(x,\cdot,t,r)}^p_{L^{p}(\os)}\leq \frac{M}{(t-r)^{\frac{dp}{2}}}\int_{\R^d}e^{-\frac{p}{m}\frac{\abs{x-y}}{t-r}}\,dy=\frac{M_p}{(t-r)^{\frac{d(p-1)}{2}}}.
\end{align}
Choosing $p=q'$ we obtain
\begin{align}\label{prestimafinale}
\norm{B(r)^\star k(x,\cdot,t,r)}^2_{L^2(\os)}\leq\frac{C^2M_{q'}^{\frac{2}{q'}}}{(t-r)^{\frac{d}{q}}}.
\end{align}
Combining \eqref{contitraccia} and \eqref{prestimafinale} we obtain
\begin{equation}
\Tr{Q(t,s)}\leq C^2\abs{\os}M_{q'}^{\frac{2}{q'}}\int_s^t\frac{e^{-\om(t-r)}}{(t-r)^{\frac{d}{q}}}\,dr.
\end{equation}
where $\abs{\os}$ is the $d$-dimensional Lebesgue measure of $\os$.

Since $q>d$, \eqref{stimafinale} implies that the trace of $Q(t,s)$ is finite for every $s<t$ and 
\[
\sup_{s<t}\left[\Tr{Q(t,s)}\right]<+\infty.
\] 
By Corollary \ref{unicità}, Remark \ref{omeganegativo} and \eqref{santograal}, there exists a unique evolution system of measures for $\pst$ in $\R$ and it is given by \eqref{gammat}.
\end{proof}

We just have to give sufficient conditions guaranteeing that \eqref{cond-logsob2} holds true. To this aim we need to recall some preliminary results.
By \cite[Thm.\,2.3]{MR945820}, $\uts\in\op(X;D(A(t)))$ and there exists $C_1>0$ such that for every $0<t-s\leq 1$ we have
\begin{align}
&\norm{\uts}_{\op(X)}+\norm{A(t)\uts}_{\mathcal{L}(X)}=\norm{\uts}_{\op(X;D(A(t)))}\leq\frac{C_1}{t-s},\label{SAT1.1}\\
&\norm{\uts}_{\op(D(A(s));D(A(t)))}\leq C_1.\label{SAT2.1}
\end{align} 
By Proposition \ref{zeta=omega} and \eqref{SAT1.1}, for every $t-s>1$ we have
\begin{align}
\norm{\uts}_{\op(D(A(s));D(A(t)))}&\leq\norm{\uts}_{\op(X;D(A(t)))}\nonumber\\
&=\norm{U(t,t-1)U(t-1,s)}_{\op(X)}+\norm{A(t)U(t,t-1)U(t-1,s)}_{\mathcal{L}(X)}\nonumber\\
&\leq \left(\norm{U(t,t-1)}_{\op(X)}+\norm{A(t)U(t,t-1)}_{\mathcal{L}(X)}\right)\norm{U(t-1,s)}_{\op(X)}\nonumber\\
&\leq C_1e^{\omega-\delta_0}e^{-(\omega-\delta_0)(t-s)}\label{SAT3.1}
\end{align} 
so by Proposition \ref{zeta=omega} and combining \eqref{SAT1.1}, \eqref{SAT2.1} and \eqref{SAT3.1} there exists $C>0$ such that for every $s<t$ we have
\begin{align}
&\norm{\uts}_{\mathcal{L}(X)}\leq C\,e^{-(\omega-\delta_0)(t-s)},\label{SAT0}\\
&\norm{\uts}_{\op(X;D(A(t)))}\leq C\,\max\left\{1,(t-s)^{-1}\right\}\,e^{-(\omega-\delta_0)(t-s)},\label{SAT1}\\
&\norm{\uts}_{\op(D(A(s));D(A(t)))}\leq C\,e^{-(\omega-\delta_0)(t-s)}.\label{SAT2}
\end{align} 
By \eqref{SAT0} and \eqref{SAT2} for every $0<\theta<1$ and for every $s<t$ we have
\begin{equation}\label{SAT3}
\norm{\uts}_{\op\left((X,D(A(s))_{\theta,2};(X,D(A(t))_{\theta,2})\right)}\leq C\,e^{-(\omega-\delta_0)(t-s)},
\end{equation}
where, for every $r\in\R$, $(X,D(A(r))_{\theta,2}$ is the standard real interpolation space.
Moreover by \eqref{SAT0} and \eqref{SAT1} for every $0<\theta<1$ and for every $s<t$ we have
\begin{equation}\label{SAT4}
\norm{\uts}_{\op\left(X;(X,D(A(t))_{\theta,2})\right)}\leq C\,\max\{1,(t-s)^{-\theta}\}\,e^{-(\omega-\delta_0)(t-s)}.
\end{equation}
Combining \eqref{SAT3} and \eqref{SAT4} for every $0<\sigma<1$ and $s<t$ we get
\begin{equation}\label{SAT5pre}
\norm{\uts}_{\op\left((X,(X,D(A(s))_{\theta,2})_{\sigma,2};(X,D(A(t))_{\theta,2})\right)}\leq C\,\max\left\{1,(t-s)^{-\theta(1-\sigma)}\right\}\,e^{-(\omega-\delta_0)(t-s)}.
\end{equation}
Recalling that by reiteration $(X,(X,D(A(s))_{\theta,2})_{\sigma,2}=(X,D(A(s))_{\theta\sigma,2}$, for every $\theta\in (0,1)$, $\rho\in (0,\theta]$ and $s<t$ we get
\begin{equation}\label{SAT5}
\norm{\uts}_{\op\left((X,D(A(s))_{\rho,2};(X,D(A(t))_{\theta,2})\right)}\leq C\,\max\{1,(t-s)^{-(\theta-\rho))}\}\,e^{-(\omega-\delta_0)(t-s)},
\end{equation}
 where we have chosen $\displaystyle{\sigma=\frac{\rho}{\theta}}$ in \eqref{SAT5pre}. 
Now we recall the characterization of $D(A(r))$ for every $r\in\R$.
 \begin{itemize}
 \item \textbf{Robin boundary condition} In this case for every $r\in\R$ we have
\begin{align} 
D(A(r))&=\left\{u\in H^2(\os):\ \mathcal{B}(r)u=0\right\}.
\end{align} 
Moreover by e.g. \cite[Thm.\,3.5, Thm.\,4.15]{MR1115176} for every $r\in\R$ and $0<\g<1$, we have
\begin{align}\label{guidetti0}
(X,D(A(r))_{\g,2}&=\begin{cases}
 H^{2\g}(\os) &\mbox{if}\ 0<\g<\frac{3}{4}\\[1ex]
 \left\{u\in H^{\frac{3}{2}}(\os)\ |\ \tilde{\mathcal{B}}(r)u\in\mathring{H}^{\frac{1}{2}}(\os)\right\} &\mbox{if}\ \g=\frac{3}{4} \\[1ex]
\left\{u\in H^{2\g}(\os)\ |\ \mathcal{B}(r)u=0\right\}\quad &\mbox{if}\ \frac{3}{4}<\g<1
\end{cases},
\end{align}
where
\begin{align}
\tilde{\mathcal{B}}(r)u=
\displaystyle{\sum_{i,j=1}^da_{ij}(x,r)D_i u\,\tilde{\nu_j}}+b_0(x,r)u,
\end{align} 
$\tilde{\nu}$ is a smooth enough extension of $\nu$ to $\overline{\os}$ and $\mathring{H}^{\frac{1}{2}}(\os)$ consists on all the elements $\ph\in H^{\frac{1}{2}}(\os)$ whose null extension outside $\overline{\os}$ belongs to $H^{\frac{1}{2}}(\R^d)$.

\item \textbf{Dirichlet boundary condition} In this case for every $r\in\R$ we have
\begin{align} 
D(A(r))&=H^2(\os)\cap H^1_0(\os).
\end{align} 
Moreover by \cite[Thm.\,3.5, Thm.\,4.15]{MR1115176} for every $r\in\R$ and $0<\g<1$, we have
\begin{align}\label{guidetti2}
\left(L^2(\os),H^2(\os)\cap H^1_0(\os)\right)_{\g,2}=\begin{cases}
 H^{2\g}(\os) &\mbox{if}\ 0<\g<\frac{1}{4}\\
 \mathring{H}^{\frac{1}{2}}(\os) &\mbox{if}\ \g=\frac{1}{4} \\
\left\{u\in H^{2\g}(\os)\ |\ u_{|_{\de\os}}=0\right\}\quad &\mbox{if}\ \frac{1}{4}<\g< 1
\end{cases}.
\end{align}
\end{itemize}
 
Now we present two explicit examples of $\{B(r)\}_{r\in\R}$ where all the hypotheses of Theorems \ref{LOG} and \ref{HYPER} are verified.

\begin{thm}\label{thm7.4}
Assume the following conditions hold true.
\begin{enumerate}
\item $X:=L^2(\os)$ where $\os\subseteq\R^d$ is a bounded open set with smooth boundary and $d=1,2,3,4,5$. 
\item The operators  $\oa(r)$ given by \eqref{operatoreat} verify Hypothesis \ref{A}.
\item The realization $A(r)$ in $L^2(\os)$ of $\oa(r)$ with one of the boundary conditions \ref{boundary} is a negative operator for every $r\in\R$.
\item For every $r\in\R$ we have 
\[
B(r)=(-A(r))^{-\gamma},\quad r\in\R,
\]
for some $\gamma\geq 0$.
\end{enumerate}
Then Hypothesis \ref{4} holds true in the following cases
\begin{align}\label{sceltagamma2}
\begin{cases}
0\leq\g<1\ \ \ \ \mbox{if}\ d=1\\[1ex]
0<\g<1\ \ \ \ \mbox{if}\ d=2\\[1ex]
 \frac{1}{4}<\g<1\ \ \ \, \mbox{if}\ d=3\\[1ex]
\frac{1}{2}<\g<1\ \ \ \ \mbox{if}\ d=4\\[1ex]
\frac{3}{4}<\g<1\ \ \ \ \mbox{if}\ d=5
\end{cases}.
\end{align}
Moreover for every $s<t$ we have
\begin{equation}\label{stima-logsob1}
\norm{U(t,s)}_{\op({\rm H}_s,{\rm H}_t)}\leq C e^{-(\omega-\delta_0)(t-s)},
\end{equation}
where $C$, $\omega$ and $\delta_0$ are the constants appearing in \eqref{SAT3}.
\end{thm}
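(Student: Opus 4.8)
The plan is to verify the hypotheses of Theorems \ref{LOG} and \ref{HYPER}: Hypothesis \ref{4}, together with $U(t,s)\ac_s\subseteq\ac_t$ and the bound \eqref{cond-logsob2}. That the family $\{A(r)\}_{r\in\R}$ satisfies Hypothesis \ref{2bis} is exactly \cite[Ex.\,2.8, Ex.\,2.9, Rmk 3.19, Ex.\,4.9]{Schnaubelt2004}, so one is left with \eqref{tracebound}, \eqref{ergoU} and \eqref{stima-logsob1}, the last being \eqref{cond-logsob2} with $\al=0$ and $\eta=\omega-\delta_0$. Condition \eqref{ergoU} is immediate: by Proposition \ref{zeta=omega} the estimate \eqref{omega} holds with $\zeta=\omega-\delta_0>0$ (positivity being the last inequality in Hypothesis \ref{A}), hence $U(t,s)x\to0$ as $s\to-\infty$ for every $x\in X$; Remark \ref{omeganegativo} then also gives that \eqref{gammat} is the unique evolution system of measures.

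The common tool for \eqref{tracebound} and \eqref{stima-logsob1} is an explicit description of $\ac_r=H_{\Qp{r}}$. Write $B(r)=(-A(r))^{-\gamma}$, which is injective with dense range because $-A(r)$ is invertible (by $a_0\le-\om<0$). Its polar decomposition $B(r)=V_r|B(r)|$ then has $V_r$ \emph{unitary}, so that $\Qp{r}(X)=(B(r)B(r)^\star)^{1/2}(X)=B(r)(X)=D((-A(r))^\gamma)$ and $\norm{y}_{\ac_r}=\norm{(-A(r))^\gamma y}_X$, a norm equivalent — uniformly in $r$ — to the graph norm of $D((-A(r))^\gamma)$. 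By the classical identification of fractional power domains of uniformly elliptic operators with the coefficient regularity of Hypothesis \ref{A} (uniform in $r$), $D((-A(r))^\gamma)=(X,D(A(r)))_{\gamma,2}$ with $r$-uniformly equivalent norms, and therefore, via \eqref{guidetti0}--\eqref{guidetti2}, a Sobolev space of order $2\gamma$ with the relevant boundary condition when $0<\gamma<1$; for $\gamma=0$ one has simply $\ac_r=X$. In particular $\ac_r\hookrightarrow H^{2\gamma}(\os)$ uniformly in $r$.

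Granted this, \eqref{tracebound} follows from Proposition \ref{traccia-finita}, whose hypotheses on $B(r)=(-A(r))^{-\gamma}$ are checked as follows: $B(r)$ maps $X=L^2(\os)$ boundedly, uniformly in $r$, into $\ac_r\hookrightarrow H^{2\gamma}(\os)$, and on the bounded smooth domain $\os$ one has $H^{2\gamma}(\os)\hookrightarrow L^q(\os)$ whenever $\frac1q\ge\frac12-\frac{2\gamma}{d}$ (with every finite $q$ if $2\gamma\ge\frac d2$). An elementary discussion of exponents shows that some $q\in[2,\infty)\cap(d,\infty)$ admissible here exists precisely in the cases \eqref{sceltagamma2}, and in particular that $d\le5$ (since $d=6$ would require $\gamma>1$). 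Continuity of $r\mapsto B(r)\varphi$ in $L^q(\os)$ comes from the strong continuity of the resolvents $r\mapsto(\lambda-A(r))^{-1}$ and the compactness of $H^{2\gamma}(\os)\hookrightarrow L^q(\os)$ for $q$ below the critical exponent. Hence Proposition \ref{traccia-finita} applies, yielding \eqref{tracebound} — so Hypothesis \ref{4} holds — and that \eqref{gammat} is the unique evolution system of measures. Finally, since $\ac_r=(X,D(A(r)))_{\gamma,2}$ uniformly, one has $U(t,s)\ac_s\subseteq\ac_t$ and \eqref{SAT5} with $\rho=\theta=\gamma$ (equivalently \eqref{SAT3}; or \eqref{SAT0} when $\gamma=0$) gives
\[
\norm{U(t,s)}_{\op(\ac_s;\ac_t)}\le C\,\norm{U(t,s)}_{\op((X,D(A(s)))_{\gamma,2};(X,D(A(t)))_{\gamma,2})}\le C\,e^{-(\omega-\delta_0)(t-s)},
\]
which is \eqref{stima-logsob1}, i.e. \eqref{cond-logsob2} with $\al=0\in[0,\frac12)$ and $\eta=\omega-\delta_0>0$; Theorems \ref{LOG} and \ref{HYPER} therefore apply.

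The genuine difficulty is concentrated in the second paragraph: passing, uniformly in $r$, from the abstract Cameron--Martin space $H_{\Qp{r}}$ to a concrete Sobolev space. This combines the functional-analytic identity $(TT^\star)^{1/2}(X)=T(X)$ (with matching norms) with the regularity theory of the elliptic family $\{A(r)\}_{r\in\R}$ — identification of fractional power domains with interpolation and Sobolev spaces, uniform elliptic estimates — and it is precisely the attendant exponent bookkeeping, there and in the Sobolev embedding $H^{2\gamma}\hookrightarrow L^q$ of the third paragraph, that singles out the list \eqref{sceltagamma2} and the restriction $d\le5$.
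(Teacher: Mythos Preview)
Your proposal is correct and follows essentially the same route as the paper: identify $\ac_r=D((-A(r))^\gamma)=(X,D(A(r)))_{\gamma,2}$, use the Sobolev embedding $H^{2\gamma}(\os)\hookrightarrow L^q(\os)$ to feed Proposition~\ref{traccia-finita} and obtain \eqref{tracebound}, invoke Proposition~\ref{zeta=omega} for \eqref{ergoU}, and read off \eqref{stima-logsob1} from \eqref{SAT3} with $\theta=\gamma$. The paper cites \cite[Thm.~4.36]{MR3753604} for the identification $\Qp{r}(X)=D((-A(r))^\gamma)$ rather than arguing via polar decomposition, and is terser about the strong continuity of $r\mapsto B(r)\varphi$, but the structure is the same.
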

\begin{oss} If $a_i\equiv 0$ for all $i=1,..,d$ hypothesis \textit{3} of Theorem \ref{thm7.4} is satisfied. If not, by standard arguments, one can find sufficient conditions on the coefficients $a_i$ such that hypothesis \textit{3} holds.
\end{oss}
\begin{proof}
By \cite[Thm.\,4.36]{MR3753604} we get
\begin{equation}\label{domini-potenze}
{\rm H}_r:=\Qp{r}(X)=D((-A(r))^\gamma)=(X,D(A(r)))_{\gamma,2},
\end{equation}
It follows that for every $q\geq 2$, the embedding of $D\left((-A(r))^\gamma\right)$ in $L^q(\os)$ is continuous for $\g\geq 2d \left(\frac{1}{2}-\frac{1}{q}\right)$. Hence for such choices of $\g$, $(-A(r))^{-\g}\in\op(L^2(\os);L^q(\os))$. 

So by Proposition \ref{traccia-finita} and recalling \eqref{guidetti0} \eqref{guidetti2}, for every $s<t$ the operator $Q(t,s)$ given by \eqref{qtscov} has finite trace in all the cases \eqref{sceltagamma2}. Hypothesis \ref{4} holds true in view of  Proposition \ref{zeta=omega}. Finally \eqref{stima-logsob1} holds true by \eqref{SAT3} and \eqref{domini-potenze} with $\theta=\g$.
\end{proof}

\begin{thm}
Assume the following conditions hold true.
\begin{enumerate}
\item $X:=L^2(\os)$ where $\os\subseteq\R^d$ is a bounded open set with smooth boundary and $d=1,2,3$. 
\item The operators  $\oa(r)$ given by \eqref{operatoreat} verify Hypothesis \ref{A}. 
\item For every $r\in\R$ we have 
\[
B(r)=(-\Delta)^{-\gamma(r)},\quad r\in\R,
\]
where $\Delta$ is the realization of the Laplacian operator in $L^2(\os)$ with Dirichlet (Robin) boundary conditions, $\gamma:\R\rightarrow [0,\alpha]$ is a non-decreasing continuous function and $\displaystyle{0<\alpha<\frac{1}{2}}$. 
\end{enumerate}
Then Hypothesis \ref{4} holds true in the following cases
\begin{align}\label{sceltagamma2.2}
\begin{cases}
\displaystyle{\inf_{r\in\R}\gamma(r)\geq 0}\ \ \ \ \mbox{if}\ d=1\\[1ex]
\displaystyle{\inf_{r\in\R}\gamma(r)> 0}\ \ \ \ \mbox{if}\ d=2\\[1ex]
\displaystyle{\inf_{r\in\R}\gamma(r)>\frac{1}{4}}\ \ \ \, \mbox{if}\ d=3
\end{cases}.
\end{align}
Moreover for every $s<t$ we have
\begin{equation}\label{stima-logsob2}
\norm{U(t,s)}_{\op({\rm H}_s,{\rm H}_t)}\leq C\max\{1,(t-s)^{-\alpha}\}e^{-(\om-\delta_0)(t-s)},
\end{equation}
where $C,\omega, \delta_0$ are the constants appearing in \eqref{SAT5}.
\end{thm}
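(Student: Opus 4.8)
The plan is to follow the scheme of the proof of Theorem \ref{thm7.4}, reducing everything to Propositions \ref{traccia-finita} and \ref{zeta=omega} and to the Acquistapace--Terreni smoothing bounds \eqref{SAT0}--\eqref{SAT5}; the only genuinely new feature is that now the noise $B(r)=(-\Delta)^{-\gamma(r)}$ is built from the fixed operator $\Delta$ with an $r$-dependent exponent, so the spaces $\mathrm{H}_r$ vary with $r$ in a controlled way. First I would note that $-\Delta$ (with Dirichlet, or with Robin and the positivity $\inf b_0\geq\beta_0>0$ from Hypothesis \ref{A}) is a positive self-adjoint operator on $X=L^2(\os)$ whose spectrum is bounded away from $0$ by some $\lambda_1>0$, so $(-\Delta)^{-\gamma(r)}$ is a bounded operator, $Q(r)=B(r)B(r)^\star=(-\Delta)^{-2\gamma(r)}$, and hence $\mathrm{H}_r=\Qp{r}(X)=(-\Delta)^{-\gamma(r)}(X)=D((-\Delta)^{\gamma(r)})$. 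By \cite[Thm.\,4.36]{MR3753604} applied to $-\Delta$ this equals $(X,D(\Delta))_{\gamma(r),2}$, and since $0\leq\gamma(r)\leq\alpha<\tfrac12<\tfrac34$, the characterizations \eqref{guidetti0} (Robin) and \eqref{guidetti2} (Dirichlet) show that this space is given by the same formula, in terms of $H^{2\gamma(r)}(\os)$, as $(X,D(A(r)))_{\gamma(r),2}$; in particular $\mathrm{H}_r=(X,D(A(r)))_{\gamma(r),2}$ and $\mathrm{H}_r$ embeds continuously into $H^{2\gamma(r)}(\os)$.

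Next I would check the hypotheses of Proposition \ref{traccia-finita}. Setting $\gamma_0:=\inf_{r\in\R}\gamma(r)$, I would take $q=2$ if $d=1$, some $q\in\bigl(2,\tfrac{2}{1-2\gamma_0}\bigr)$ if $d=2$, and some $q\in\bigl(3,\tfrac{6}{3-4\gamma_0}\bigr)$ if $d=3$: precisely in the ranges \eqref{sceltagamma2.2} these intervals are nonempty, $q\in[2,+\infty)\cap(d,+\infty)$, and the Sobolev embedding theorem gives $H^{2\gamma_0}(\os)\hookrightarrow L^q(\os)$, hence $H^{2\gamma(r)}(\os)\hookrightarrow L^q(\os)$ with embedding constant independent of $r$ because $\gamma(r)\geq\gamma_0$. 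Since moreover $\|(-\Delta)^{-\gamma(r)}\|_{\op(X;D((-\Delta)^{\gamma(r)}))}\leq 1+\lambda_1^{-\gamma(r)}$ is uniformly bounded in $r$, we obtain $\sup_{r\in\R}\norm{B(r)}_{\op(L^2(\os);L^q(\os))}<+\infty$, while the continuity of $r\mapsto B(r)\ph$ in $L^q(\os)$ follows from the continuity of $\gamma$ together with the strong continuity of $s\mapsto(-\Delta)^s$ near $0$ (spectral theorem and dominated convergence in the spectral series). Thus Proposition \ref{traccia-finita} applies: $Q(t,s)$ has finite trace for every $s<t$, \eqref{tracebound} holds, and there is a unique evolution system of measures $\{\g_t\}_{t\in\R}$ given by \eqref{gammat}. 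By Proposition \ref{zeta=omega}, \eqref{omega} holds with $\zeta=\om-\delta_0>0$, so \eqref{ergoU} holds by Remark \ref{omeganegativo}, and therefore Hypothesis \ref{4} is satisfied.

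Finally I would prove \eqref{stima-logsob2}. Fix $s<t$; since $\gamma$ is non-decreasing, $0\leq\gamma(s)\leq\gamma(t)\leq\alpha<1$. Using $\mathrm{H}_r=(X,D(A(r)))_{\gamma(r),2}$ from the first step, I would apply \eqref{SAT5} with $\rho=\gamma(s)$ and $\theta=\gamma(t)$ when $\gamma(s)>0$, \eqref{SAT4} with $\theta=\gamma(t)$ when $\gamma(s)=0<\gamma(t)$, and \eqref{SAT0} when $\gamma(s)=\gamma(t)=0$, obtaining in all cases
\[
\norm{U(t,s)}_{\op(\mathrm{H}_s;\mathrm{H}_t)}\leq C\,\max\bigl\{1,(t-s)^{-(\gamma(t)-\gamma(s))}\bigr\}\,e^{-(\om-\delta_0)(t-s)}.
\]
Because $0\leq\gamma(t)-\gamma(s)\leq\alpha$, one has $(t-s)^{-(\gamma(t)-\gamma(s))}\leq(t-s)^{-\alpha}$ for $0<t-s<1$ and $(t-s)^{-(\gamma(t)-\gamma(s))}\leq1$ for $t-s\geq1$, so the right-hand side is $\leq C\max\{1,(t-s)^{-\alpha}\}e^{-(\om-\delta_0)(t-s)}$, which is \eqref{stima-logsob2}; in particular $U(t,s)\mathrm{H}_s\subseteq\mathrm{H}_t$ and \eqref{cond-logsob2} holds with $\eta=\om-\delta_0$, so all the assumptions of Theorems \ref{LOG} and \ref{HYPER} are met. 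I expect the main difficulty to be the bookkeeping in the first two steps: identifying $\mathrm{H}_r$ with an interpolation space of $A(r)$, so that the $A(r)$-based estimates \eqref{SAT0}--\eqref{SAT5} become applicable even though $B(r)$ is defined through $\Delta$, and obtaining uniformity in $r$ of the Sobolev and trace bounds — it is precisely this uniformity that forces $\inf_r\gamma(r)$ to be large enough to embed $H^{2\inf_r\gamma(r)}(\os)$ into some $L^q(\os)$ with $q>d$, which is the origin of the case distinction \eqref{sceltagamma2.2}.
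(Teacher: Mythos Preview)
Your proposal is correct and follows essentially the same route as the paper: identify $\mathrm{H}_r=D((-\Delta)^{\gamma(r)})$ with the interpolation space $(X,D(A(r)))_{\gamma(r),2}$ via \eqref{guidetti0}--\eqref{guidetti2} (this is where $\alpha<\tfrac12$ is used), apply Proposition~\ref{traccia-finita} through the Sobolev embedding $H^{2\gamma_0}(\os)\hookrightarrow L^q(\os)$ with $q>d$ to get the trace bound and Hypothesis~\ref{4}, and then invoke \eqref{SAT5} with $\rho=\gamma(s)$, $\theta=\gamma(t)$ to obtain \eqref{stima-logsob2}. Your write-up is in fact slightly more careful than the paper's own proof: you explicitly verify the strong continuity of $r\mapsto B(r)\varphi$, you handle the endpoint cases $\gamma(s)=0$ or $\gamma(t)=0$ (relevant when $d=1$) by falling back on \eqref{SAT4} and \eqref{SAT0}, and you spell out why $\max\{1,(t-s)^{-(\gamma(t)-\gamma(s))}\}\leq\max\{1,(t-s)^{-\alpha}\}$.
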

\begin{proof}
We prove the statement in the case where $\{A(r)\}_{r\in\R}$ is the realization in $L^2(\os)$ of $\oa(r)$ with Robin boundary conditions, the case with Dirichlet can be treated in the same way.
Since $\displaystyle{\sup_{r\in\R}\gamma(r)<\frac{1}{2}}$ by \eqref{guidetti0} and \eqref{guidetti2} for every $r\in\R$ we have
\begin{equation}\label{domini-potenze2}
{\rm H}_r:=\Qp{r}(X)=D((-\Delta)^{\gamma(r)})=(X,D(A(r)))_{\gamma,2}=H^{2\gamma}(\os).
\end{equation}
It follows that for every $q\geq 2$, the embedding of $D\left((-A(r))^\gamma\right)$ in $L^q(\os)$ is continuous for $\g\geq 2d \left(\frac{1}{2}-\frac{1}{q}\right)$. Hence for such choices of $\g$, $(-A(r))^{-\g}\in\op(L^2(\os);L^q(\os))$. 

So by Proposition \ref{traccia-finita}, for every $s<t$ the operator $Q(t,s)$ given by \eqref{qtscov} has finite trace in all cases \eqref{sceltagamma2.2}. Hypothesis \ref{4} holds true in view of Proposition \ref{zeta=omega} and for every $s<t$, by \eqref{SAT5}(with $\theta=\gamma(t)$ and $\rho=\gamma(s)$) and \eqref{domini-potenze2} we have
\begin{equation*}
\norm{\uts}_{\op\left({\rm H}_s,{\rm H}_t\right)}\leq C\max\{1,(t-s)^{-(\gamma(t)-\gamma(s))}\}e^{-(\omega-\delta_0)(t-s)},
\end{equation*}
so recalling that $\gamma:\R\rightarrow [0,\alpha]$ is a non-decreasing continuous function and $\displaystyle{0<\alpha<\frac{1}{2}}$ we obtain \eqref{stima-logsob2}.
 \end{proof}
 
 \subsection{Diagonal operators}
Let $(X,\norm{\cdot}_X,\ix{\cdot}{\cdot})$ be a separable Hilbert space. Let $t\in\R$ and let $A(t)$, $B(t)$ be self-adjoint operators in diagonal form with respect to the same Hilbert basis $\{e_k:\ k\in\Nat\}$, namely $$A(t)e_k=a_k(t)e_k,\ \ B(t)e_k=b_k(t)e_k\ \ t\in\R,\ \ k\in\Nat,$$ with continuous coefficients $a_k$, $b_k$. We set $\displaystyle{\la_k=\sup_{t\in\R}a_k(t)}$ and we assume that there exists $\la_0\in\R$ such that $\la_k\leq\la_0,\ \ \forall\ k\in\Nat.$

In this setting  the operator $\uts$ defined by $$U(t,s)e_k=\exp\biggl(\int_s^ta_k(\tau)\,d\tau\biggr)e_k,\ \ (s,t)\in\Delta,\ \ k\in\Nat,$$ is the strongly continuous evolution operator associated to the family $\{A(t)\}_{t\in\R}$.
Moreover we assume that there exists $K>0$ such that $$\abs{b_k(t)}\leq K,\ \ t\in\R,\ k\in\Nat.$$
Hence $B(t)\in\op(X)$ for all $t\in\R$, the function $B:\R\longmapsto\op(X)$ is continuous and $$\sup_{t\in\R}\norm{B(t)}_{\op(X)}\leq K.$$
The operators $Q(t,s)$ are given by
\begin{equation}
Q(t,s)e_k=\int_s^t\exp\biggl(2\int_\si^ta_k(\tau)\,d\tau\biggr)(b_k(\si))^2\,d\si\,e_k=:q_k(t,s)e_k,\ \ (s,t)\in\Delta,\ k\in\Nat. \nonumber
\end{equation}
Hypothesis \ref{1} is fulfilled if 
\begin{equation}\label{qk1}
\sum_{k=0}^\infty q_k(t,s)<+\infty,\ \ (s,t)\in\Delta.
\end{equation}
We give now a sufficient condition for \eqref{qk1} to hold.
We assume that $\la_k$ is eventually nonzero (say for $k\geq k_0$). Given $(s,t)\in\Delta$, we have
\begin{align}\label{qk2}
&\abs{\int_s^t\exp\biggl(2\int_\si^ta_k(\tau)\,d\tau\biggr)(b_k(\si))^2\,d\si}\leq \norm{b_k}^2_{\infty}\abs{\int_s^t\exp\bigl(2\la_k(t-\si)\bigr)\,d\si}\nonumber \\
&=\frac{\norm{b_k}^2_{\infty}}{2\abs{\la_k}}\abs{1-\exp(2\la_k(t-s))}\leq\frac{\norm{b_k}^2_{\infty}}{2\abs{\la_k}}\bigl(1+\exp(2\la_0(t-s))\bigr).
\end{align}
Hence \eqref{qk1} holds if we require 
\begin{equation}
\sum_{k=k_0}^\infty\frac{\norm{b_k}^2_{\infty}}{\abs{\la_k}}<+\infty.
\end{equation}
We note that $D(A(t))=D(A(t)^\star)=X$ for all $t\in\R$ and Hypothesis \ref{2bis} is easily checked.

In order to have existence of an evolution system of measures for $\pst$ in $\R$, the hypotheses of Theorem \ref{esistenza} are satisfied if we require $\la_0\leq 0$ in \eqref{qk2}. Moreover if we require $\la_0<0$, \eqref{omega} holds with $\zeta<0$ and by Theorem \ref{unicità} there exists a unique system of measures for $\pst$ in $\R$.

Now we investigate when \eqref{cond-logsob2} holds.
We observe first that for $y=\Qp{s}x$ with $x\in H_s$, we have 
\begin{align}
&\norm{U(t,s)y}^2_{H_t}=\norm{\Qm{t}U(t,s)\Qp{s}x}_X^2=\sum_{\substack{k\in\Nat}}\biggl(\frac{\abs{b_k(s)}}{\abs{b_k(t)}}\exp\biggl(\int_s^ta_k(\tau)\biggr)\iprod{x}{e_k}\biggr)^2\nonumber\\
&=\sum_{\substack{k\in\Nat}}\biggl(\frac{\abs{b_k(s)}}{\abs{b_k(t)}}\exp\biggl(\int_s^ta_k(\tau)\biggr)\frac{\iprod{y}{e_k}}{\abs{b_k(s)}}\biggr)^2=\sum_{\substack{k\in\Nat}}\biggl(\frac{1}{\abs{b_k(t)}}\exp\biggl(\int_s^ta_k(\tau)\iprod{y}{e_k}_{H_s}\biggr)\biggr)^2. \nonumber
\end{align}

We assume that there exist $L>0$ such that $\abs{b_k(t)}\geq L$ for all $k\in\Nat$ and $t\in\R$.
Hence for any $k\in\Nat$, we have
\begin{align} 
\frac{1}{b_k^2(t)}\exp\biggl(\int_s^t2a_k(\tau)\,d\tau\biggr)\leq \frac{1}{L^2}e^{2\la_0 (t-s)},
\end{align}
and
\begin{equation}
\norm{\uts}_{\op(H_s,H_t)}\leq \frac{1}{L}e^{\la_0 (t-s)},\ \ (s,t)\in\Delta.
\end{equation}
Since $\la_0<0$, hypotheses of Theorems \ref{LOG} and \ref{HYPER} are satisfied.

As an explicit example we can choose $\displaystyle{a_k(t)=-\frac{k^2+c_1}{t^{2k}+1}}$, $c_1>0$ and $b_k(t)=\sin(kt)+c_2$, $c_2>1$ for all $t\in\R$.  

\begin{oss}\label{nonunico} We assume now that $\displaystyle{\max_{t\in\R}a_1(t)=0}$, $a_1\in L^1(\R)$ and there exists $c>0$ such that $\la_k<-c$ for $k\geq 2$. In this case $\la_0=0$ and we can show that there exist at least two different evolution system of measures uniformly tight for $\pst$ in $\R$. Setting 
\begin{equation}
V_{s,t}\ph(x)=\ph(U(t,s)x),\ \ \ph\in C_b(X),\ x\in X,\ s\leq t,
\end{equation}
we show first that there exist at least two evolution system of measures for $V_{s,t}$ in $\R$. We claim that the families $\{\mu^{(1)}_t\}_{t\in\R}$ and $\{\mu^{(2)}_t\}_{t\in\R}$ defined by $\mu^{(1)}_t\equiv\delta_0$ and $\displaystyle{\mu_t^{(2)}=\delta_{\frac{e_1}{m_t}}}$ with $\displaystyle{m_t=e^{\int_{-\infty}^ta_1(\tau)\,d\tau}}$ for all $t\in\R$, are evolution system of measures for $V_{s,t}$ in $\R$.
Indeed given $\ph\in C_b(X)$, we have
\begin{align}
&\int_X\ph\left(U(t,s)x\right)\,\delta_0(dx)=\ph\left(U(t,s)0\right)=\ph(0)=\int_X\ph\left(x\right)\,\delta_0(dx),\nonumber \\
&\int_X\ph\left(U(t,s)x\right)\,\delta_{\frac{e_1}{m_t}}(dx)=\ph\left(U(t,s)\frac{e_1}{m_t}\right)=\ph\left(\frac{e_1}{m_s}\right)=\int_X\ph\left(x\right)\,\delta_{\frac{e_1}{m_s}}(dx).\nonumber
\end{align}
Hence, by Theorem \ref{esistenza}, $\{\g_t\}_{t\in\R}$ with $\g_t$ given by \eqref{gammat} and $\{\nu_t\}_{t\in\R}$ with $\nu_t=\g_t\star\mu_t^{(2)}$ are evolutions systems of measures for $\pst$ in $\R$.
\end{oss}

\subsection{A non autonomous version of the classical Ornstein-Uhlenbeck operator}

Let $A(t)=a(t) I$, where $a$ is a continuous and bounded real valued map on $\R$ and set $\displaystyle{\sup_{t\in \R}a_0(t)=a_0}$. 
Hence $$U(t,s)=\exp\biggl(\int_s^ta(\tau)\,d\tau\biggr) I,\ \ (s,t)\in\R^2$$ is continuous with values in $\op(X)$ and it is associated to the family $\{A(t)\}_{t\in\R}$. 

Let $\{B(t)\}_{t\in\R}\subseteq\op(X)$ be a family of operators satisfying Hypothesis \ref{1} (2).
Since
\begin{equation}
Q(t,s)=\int_s^t \exp\biggl(2\int_\si^ta(\tau)\,d\tau\biggr)Q(\si)\,d\si\ \ (s,t)\in\Delta,\nonumber
\end{equation}
where $Q(\si)=B(\si)B(\si)^\star$, a sufficient and obvious condition for $\Tr{Q(t,s)}<+\infty$ is that $\Tr{Q(\si)}<+\infty$ for a.e. $\si\in\R$ and $\si\longmapsto\Tr{Q(\si)}\in L^1(\R)$. 
Indeed, if $\{e_k\}_{k\in\Nat}$ is a Hilbert basis of $X$, we have
\begin{align}\label{tracciamallia}
\ix{Q(t,s)e_k}{e_k}\leq e^{2a_0(t-s)}\int_s^t \ix{Q(\si)e_k}{e_k}\,d\si\ \ (s,t)\in\Delta.
\end{align}
In this case, $\pst$ is a non autonomous generalization of  the classical Ornstein-Uhlenbeck semigroup widely used in the Malliavin Calculus. 

We note that $D(A(t))=D(A(t)^\star)=X$ for all $t\in\R$ and Hypothesis \ref{2bis} is obviously satisfied.

In addition to the above assumptions on the trace of the operators $Q(\si)$, we require that for all $t\in\R$ there exists $C_t>0$ such that 
\begin{equation}\label{normbt}
\norm{B(s)}_{\op(X)}\leq C_t\norm{B(t)}_{\op(X)},\quad \forall\,s<t.
\end{equation}
Moreover we assume also that $a_0< 0$.

By \eqref{tracciamallia} and  \eqref{normbt}, \eqref{omega} holds and $$\norm{\uts}_{\op(X)}\leq e^{a_0(t-s)},\ \ (s,t\in\R^2).$$ For all $(s,t)\in\Delta$ we have
\begin{align}
\ix{Q(t,s)e_k}{e_k}&\leq e^{2a_0(t-s)}\int_s^t \ix{Q(\si)e_k}{e_k}\,d\si= e^{2a_0(t-s)}\int_s^t \norm{B(\si)^\star e_k}_X^2\,d\si\nonumber\\
&\leq e^{2a_0(t-s)}(t-s) C_t^2 \norm{B(t)^\star e_k}_X^2=C_t^2 e^{2a_0(t-s)}(t-s)\ix{Q(t)e_k}{e_k}.
\end{align} 
Therefore $$\sup_{s<t}\Tr{Q(t,s)}<+\infty$$ and by Theorem \ref{unicità} there exists a unique evolution system of measures for $\pst$ in $\R$.

Moreover \eqref{cond-logsob2} holds. Indeed
\begin{align}
\norm{\Qp{s}}_{\op(X)}^2=\norm{B(s)^\star}^2_{\op(X)}\leq C_t^2\norm{B(t)^\star}^2_{\op(X)}= C_t^2\norm{\Qp{t}}_{\op(X)}^2.
\end{align}
Then, by Proposition B.1 in Appendix B in \cite[pag. 429]{MR3236753}, $H_s\subseteq H_t$ with continuous embedding and for $x\in H_s$ we have
\begin{align}\label{stimamalli}
\norm{U(t,s)x}_{H_t}=\norm{\exp\biggl(\int_s^ta(\tau)\,d\tau\biggr)x}_{H_t}\leq C_t e^{a_0(t-s)}\norm{x}_{H_s}.
\end{align}
Since $a_0<0$, the hypotheses of Theorems \ref{LOG} and \ref{HYPER} are satisfied.

\begin{oss}\label{ottimale}
If $a(t)=-1$ and ${\rm Range}\left(Q(t)^{1/2}\right)={\rm Range}\left(Q(0)^{1/2}\right)$ for every $t\in\R$, then $a_0=-1$ and $c_t=1$ in \eqref{stimamalli} so $\kappa$ defined in \eqref{logsobcost} is equal to $\displaystyle{\frac{1}{2}}$. Moreover \eqref{iper} holds for every $(s,t)\in\Delta$ and $p\leq C(t,s,q):=(q-1)e^{t-s}+1$. We stress that $C(t,s,q)$ is non autonomous version of the optimal constant given in \cite[Rmk.\,3.4]{gro75} and \cite[p. 242 under (1.3)]{fur1998}. 

We now give an explicit example where ${\rm Range}(B(t))={\rm Range}(B(0))$ for every $t\in\R$. Let $d\in\Nat$, let $X=L^2(\os)$ where $\os$ is a bounded open subset of $\R^d$ with smooth enough boundary.  We choose $B(t)=(A(t))^{-\g}$ where $A(t)$ is the realization of $\oa(t)$ given by \eqref{operatoreat} with Dirichlet boundary conditions, and $\g$ satisfies \eqref{sceltagamma2}.
\end{oss}

\section*{Acknowledgement}
The authors are members of GNAMPA (Gruppo Nazionale per l'Analisi Matematica, la Probabilit\`a le loro Applicazioni) of the Italian Istituto Nazionale di Alta Matematica (INdAM).

\addcontentsline{toc}{section}{References}
\printbibliography

\end{document}